\titleformat*{\section}{\centering\bfseries}
\titleformat*{\subsection}{\bfseries}
\newtheorem{theorem}{Theorem}[section]
\newtheorem{thm}{Theorem}[section]
\newtheorem{propn}[thm]{Proposition}
\newtheorem{lemma}[thm]{Lemma}
\theoremstyle{remark}
\newtheorem{remark}[thm]{Remark}
\newtheorem{corollary}[thm]{Corollary}
\newtheorem{example}{Example}[section]
\theoremstyle{definition}
\theoremstyle{plain}
\newtheorem*{assumption*}{Assumptions}
\newtheorem*{assumption**}{Assumptions, continued}
\DeclareMathOperator{\indset}{\mathbb{I}}
\DeclareMathOperator{\prob}{\mathbb{P}}
\DeclareMathOperator{\exptn}{\mathbb{E}}
\DeclareMathOperator*{\argmax}{arg\,max}
\newcommand{\bs}[1]{{\boldsymbol #1 }}
\newcommand{\vb}{\vspace{3.2mm}}
\title{\smaller \smaller \textbf{STATISTICAL INFERENCE FOR A SERVICE SYSTEM WITH NON-STATIONARY ARRIVALS AND UNOBSERVED BALKING}}
\author{Shreehari Anand Bodas, Michel Mandjes, and Liron Ravner}
\date{}
\numberwithin{equation}{section}
\begin{document}

\maketitle

\begin{abstract}
    \small \noindent
    We study a multi-server queueing system with a periodic arrival rate and customers whose joining decision is based on their patience and a delay proxy. 
    Specifically, each customer has a patience level sampled from a common distribution. 
    Upon arrival, they receive an estimate of their delay before joining service and then join the system only if this delay is not more than their patience, otherwise they balk.  
    The main objective is to estimate the parameters pertaining to the arrival rate and patience distribution. 
    Here the complication factor is that this inference should be performed based on the observed process only, i.e., balking customers remain unobserved. 
    We set up a likelihood function of the state dependent effective arrival process (i.e., corresponding to the customers who join), establish strong consistency of the MLE, and derive the asymptotic distribution of the estimation error. 
    Due to the intrinsic non-stationarity of the Poisson arrival process, the proof techniques used in previous work become inapplicable. 
    The novelty of the proving mechanism in this paper lies in the procedure of constructing i.i.d.\ objects from dependent samples by decomposing the sample path into i.i.d.\ regeneration cycles. 
    The feasibility of the MLE-approach is discussed via a sequence of numerical experiments, for multiple choices of functions which provide delay estimates.
    In particular, it is observed that the arrival rate is best estimated at high service capacities, and the patience distribution is best estimated at lower service capacities.
    % I decided to leave this specific sentence out.
    %Issues such as quasi-identifiability as well as the irregular shape of the log-likelihood which cause difficulties in estimation are discussed.}
    % \footnote{{\bf MM}: say a bit more about the new insights gained.}

\vb
    
\noindent
{\sc Keywords.} Service systems $\circ$ balking $\circ$ periodic arrivals $\circ$ estimation $\circ$ customer patience

\vb

\noindent
{\sc Affiliations.} 
SAB is with the Korteweg-de Vries Institute for Mathematics, University of Amsterdam, Science Park 904, 1098 XH Amsterdam, The Netherlands. \url{s.a.bodas@uva.nl}

\noindent 
MM is with the Mathematical Institute, Leiden University, P.O. Box 9512,
2300 RA Leiden,
The Netherlands. He is also affiliated with Korteweg-de Vries Institute for Mathematics, University of Amsterdam, Amsterdam, The Netherlands; E{\sc urandom}, Eindhoven University of Technology, Eindhoven, The Netherlands; Amsterdam Business School, Faculty of Economics and Business, University of Amsterdam, Amsterdam, The Netherlands. \url{m.r.h.mandjes@math.leidenuniv.nl}

\noindent
LR is with the Department of Statistics, University of Haifa, 199 Aba Khoushy Avenue, Mount Carmel, Haifa, Israel. \url{lravner@stat.haifa.ac.il}

\vb

\noindent
{\sc Acknowledgments.} 
This research was supported by the European Union’s Horizon 2020 research and innovation programme under the Marie Skłodowska-Curie grant agreement no.\ 945045, by the NWO Gravitation project NETWORKS under grant agreement no.\ 024.002.003 and by the Israel Science Foundation (ISF), grant no.\ 1361/23. \includegraphics[height=1em]{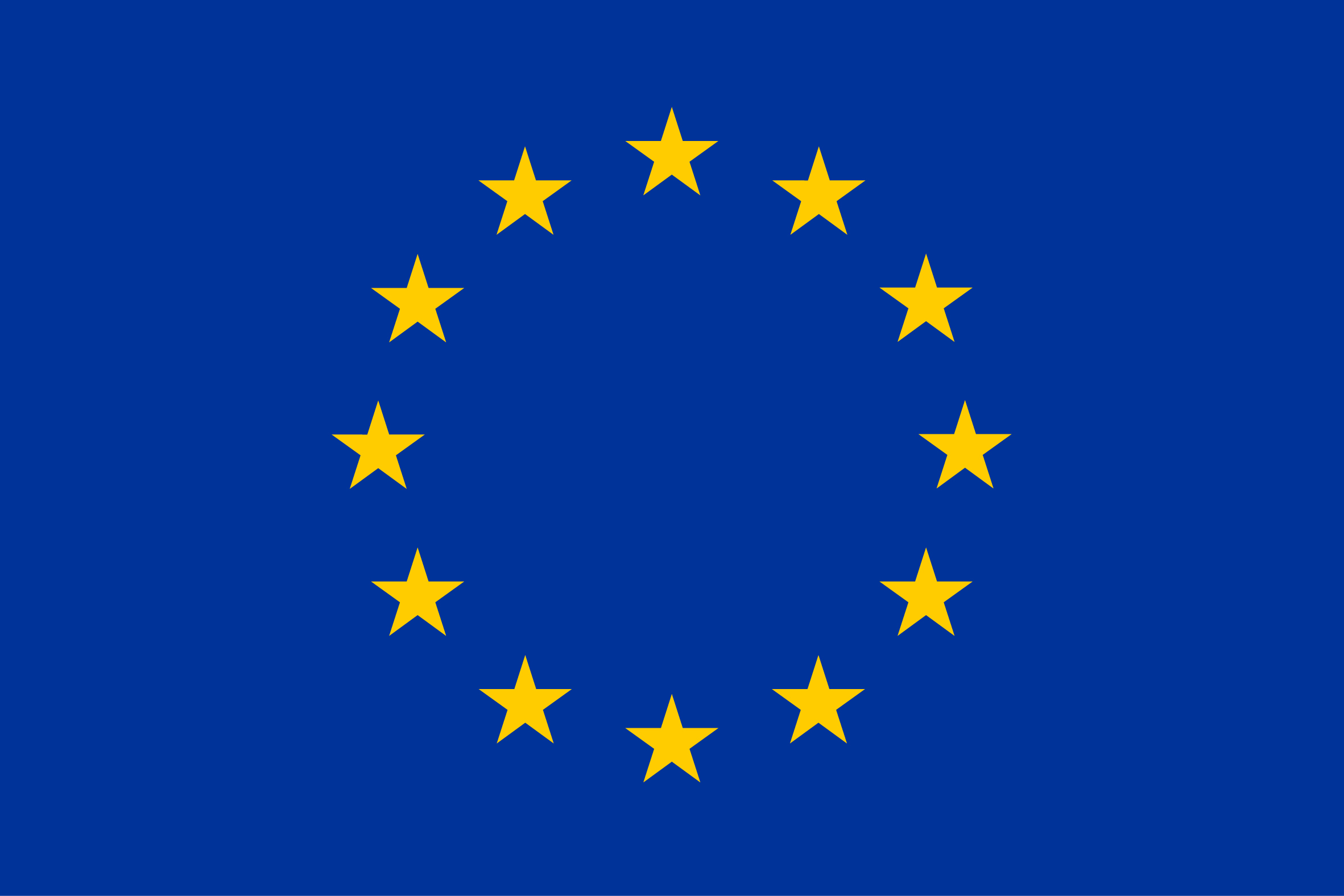} 
Date: {\it \today}.

\end{abstract}

% Papers analyzing data of such systems. Ibrahim papers - call centers, healthcare

% OR, demand modelling, queues, arrival process

% OR, MSOM, Productions operations management, Management science, healthcare management science

% Empirical backing for arrival process, service systems, data,

\newpage

\section{INTRODUCTION}\label{section: Introduction}

In our modern world, customers interact with service systems on a daily basis. 
Examples abound across a broad range of economic sectors: besides conventional shops, one can think of online delivery services, communication and computing networks, call centers, and various types of healthcare systems. 
Seen from a conceptual perspective, in these systems the service capacity is a scarce resource, for which customers compete. 
When these customers are provided information on the current occupancy level, they may find the anticipated service level too poor, so that they decide not to join, a phenomenon known as `balking'. 
Such balking decisions are typically made based on an estimate of the delay the customer under consideration would be facing, or, more generally, the performance level they would receive.

When operating a service system, it is of crucial importance to have an accurate quantitative description of its underlying dynamics. 
Often one resorts to a queueing-theoretic framework, encompassing a customer arrival process, a specification of the service requirements, and a service mechanism.
In a practical context, this means that the `model primitives' are estimated, by performing statistical inference based on observations of the system over time. 
A frequently overlooked complication, however, is that often balking customers remain unobserved. 
Despite the fact that balking customers do not join the system, a system operator still wishes to reliably estimate the effective arrival rate, i.e., the rate of joining customers plus the rate of balking customers.
The reason why this total arrival rate is relevant, is that it reflects the demand for the service, which is of key importance in decisions concerning e.g.\ staffing, dimensioning or pricing. 
Healthcare service providers, for instance, could use knowledge of the total demand to decide whether extra doctors or beds are required.
Along the same lines, delivery services could consider a price adjustment so as to enlarge their market share. 

A second complication that is ignored in many studies, is that the demand for service often fluctuates with time, typically following daily or weekly patterns; see for instance \cite{ibrahim2013forecasting,ibrahim2016modeling} for analyses in the call center context. 
This means that while many modelling frameworks assume a time-independent Poissonian effective arrival rate, it would be more natural to work with a rate that follows some periodic pattern. 

When setting up an estimation approach, one would like to be sure it provides reliable output.
This concretely means that, ideally, the proposed estimator is consistent (i.e., converge to the true values when the sample size grows large), while in addition we would like to have insight into its asymptotic properties (for instance by proving that the estimator is asymptotically normally distributed).

The main conclusion from the above considerations, is that there is a clear need for a robust methodology to statistically estimate the service system's model parameters. 
To make sure it can be used in an operational context, it should fulfil the following three requirements: (a)~it should be based on observable data only, i.e., data corresponding to the joining customers, (b)~it should be able to deal with a non-constant effective arrival rate, and (c)~the proposed procedure should be equipped with provable performance guarantees. 

\vb

In this paper we choose to model the service system as a first-come-first-served queue of the M$_t$/G/$s$+H type.
We have selected this class of models because of its generality, capturing a broad range of relevant service systems.
It can be described as follows:\begin{itemize}
    \item[$\circ$]
In the first place, customers arrive according to a non-homogeneous periodic Poissonian arrival rate $\lambda_{\bs{\alpha}}(\cdot)\ge  0$.
Acknowledging that this rate typically follows a daily (or weekly) pattern, we throughout impose the natural assumption that we know the length of the corresponding period.
In our setup we develop a {\it parametric} inference procedure: we let the arrival rate be parametrized by an unknown (finite-dimensional) parameter vector $\bs{\alpha}$. 
\item[$\circ$] Each customer brings along a random amount of work. These service requirements for an i.i.d.\ sequence of non-negative random variable, with cumulative distribution function $G(\cdot)$, sampled independently of the arrival process. 
%Owing to the fact that it can be estimated from observables in a straightforward manner, in our analysis $G(\cdot)$ is known. \footnote{\textcolor{blue}{LR: Do we really need to assume $G$ is known? We do not use the specific distribution in the estimation procedure. I think it is enough to assume that is satisfies stability conditions.}}
\item[$\circ$]
There are $s\in{\mathbb N}$ servers who serve in a first-come-first-serve manner. As it is a design parameter that is under full control of the service provider, throughout this paper we assume we know $s$.
\item[$\circ$]
Our system explicitly takes into account the option of balking by incorporating the following impatience mechanism. 
Each arriving customer is equipped with a patience level, sampled independently of everything else. The customers' patience levels are i.i.d., distributed as the non-negative random variable $Y$, characterized through the cumulative distribution function $H_{\bs{\theta}}(\cdot)$. 
%\textcolor{blue}{The patience level may be a continuous r.v. which represents the time a customer is willing to wait, or a discrete one that represent for how many service completions a customer is willing to wait. This allows the model to take into account various joining decision mechanisms.} 
Again, we work in a parametric context, with $\bs{\theta}$ representing the underlying (finite-dimensional) parameter vector.
At every time instant $t \geq 0$, the service operator announces $\Delta(t)$ - an estimate of the delay before service which a customer arriving at time $t$ can expect given the state of the system at time $t$.
By comparing this delay estimate with their patience, customers decide to join or to balk. 
\item[$\circ$]
In this paper, we consider a general delay announcement mechanism, unlike \cite{inoue2023estimating} which analyzes only the scenario $\Delta(t) = V(t)$ (where $V(t)$ is the virtual waiting time at time $t$). In particular, we consider the form $\Delta(t) = \psi\big(\mathcal{R}(t)\big)$, where $\psi$ is a function which maps the state of the system to a corresponding delay estimate.
% In this paper we consider multiple flavors of the balking mechanism. 
% In certain systems, it is possible for customers to observe the workload, so that they know their exact prospective delay; think of a communication network in which any customer is provided with the volume of traffic to be processed prior to the commencement of their service. 
For example, in some systems delay announcements may be estimated based on the number of customers in the system; think of a call center or a physical shop.
Once a customer joins the system, they do not leave until service completion.
\end{itemize}

In the context of this M$_t$/G/$s$+H setup, our concrete objective is to devise a procedure, endowed with provable performance guarantees, to estimate the parameter vector $(\bs{\alpha}, \bs{\theta})$ governing the arrival rate and the patience distribution, relying on information regarding the non-balking customers only. 
Our procedure is based on maximum likelihood estimation (MLE), exploiting that the system's key dynamics are dictated by the state-dependent effective arrival rate in the following way. 
Suppose that at a given point in time, which we associate for ease with time $0$, the virtual waiting time is $v$; this virtual waiting time is defined as the waiting time of a hypothetical customer joining at time $0$. 
If there are no effective arrivals in $(0,t]$, then the virtual waiting time at time $t$ is $(v-t)^{+}$, so that the effective arrival rate at time $t$ becomes \[\lambda_{\bs{\alpha}}(t)\big(1-H_{\bs{\theta}}((v-t)^{+})\big).\] 
Observe how this arrival rate depends both on time as well as the state of the system (or, more concretely, the current congestion level). 
Various other variants can be thought of, such as the above-mentioned mechanism in which the delay announcement is a function of the number of customers in the system.
% number of customers (seen upon arrival) is observed, rather than the exact virtual waiting time.
%In this variant, discussed in greater detail below, the arriving customer could approximate the delay they are facing by the product of the number of customers waiting, multiplied with the mean service time $g$.
Note that in the latter case, between successive events (which can be arrivals or departures), the arrival rate depends only on time. 
The idea is then that once we know the state-dependent effective arrival rate, we can calculate the distribution of the effective interarrival times conditioned on the system state, which we use to produce a closed-form expression for the conditional log-likelihood of the arrival process.

When attempting to estimate $(\bs{\alpha}, \bs{\theta})$, there are various additional complications to be dealt with. 
An important challenge is that when the non-homogeneous arrival rate is significantly greater than the service capacity, the system operator only observes the joining customers may incorrectly conclude that the arrival rate is a small constant, as explained in Example \ref{example: best and worst instances for estimation}. 
Another challenge concerns the  (possible lack of) identifiability of the arrival and patience distribution functions with respect to the parameters, as explained in Example \ref{example: identifiability issues}.   

% Ref for sections theorems 
% and eqref for equations
% Add references to examples
% "e" capital when quoting an example, "f" for figure, "t" for theorem .....
% Diamond after example (like after proof)

% Latex fonts in python graphs

\begin{example}\label{example: best and worst instances for estimation}
    \begin{figure}[t!]
    \begin{subfigure}{0.48\textwidth}
    \includegraphics[width=\linewidth]{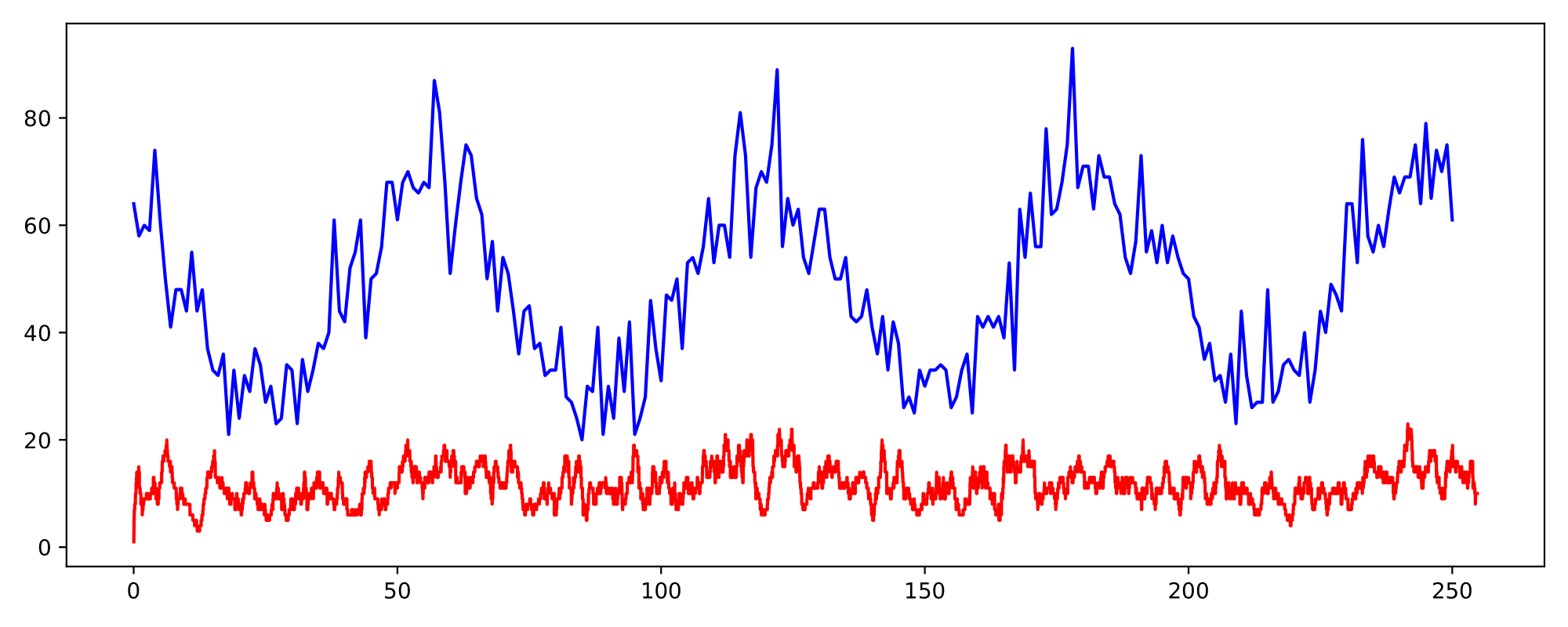}
    \caption{$s=1$, balk\:=\:90\%} \label{fig:a}
    \end{subfigure}\hspace*{\fill}
    \begin{subfigure}{0.48\textwidth}
    \includegraphics[width=\linewidth]{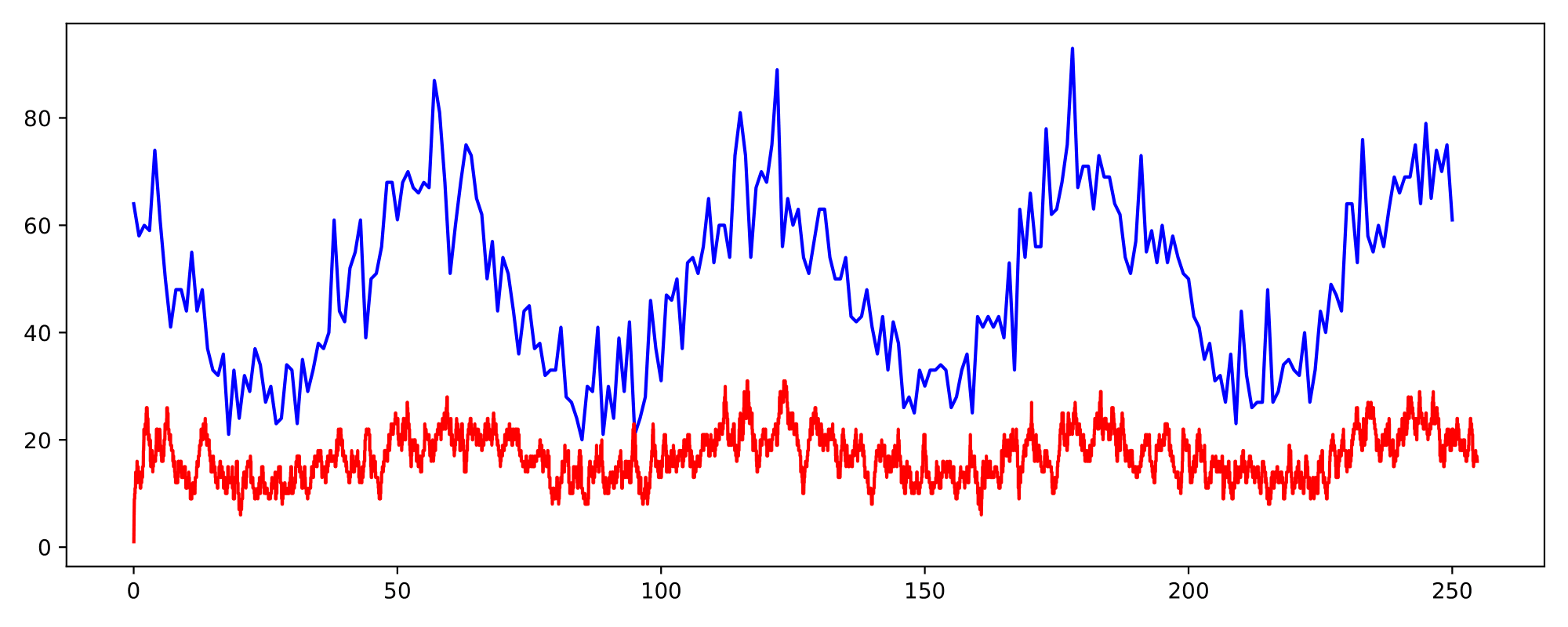}
    \caption{$s=2$, balk\:=\:80\%} \label{fig:b}
    \end{subfigure}
    
    \medskip
    \begin{subfigure}{0.48\textwidth}
    \includegraphics[width=\linewidth]{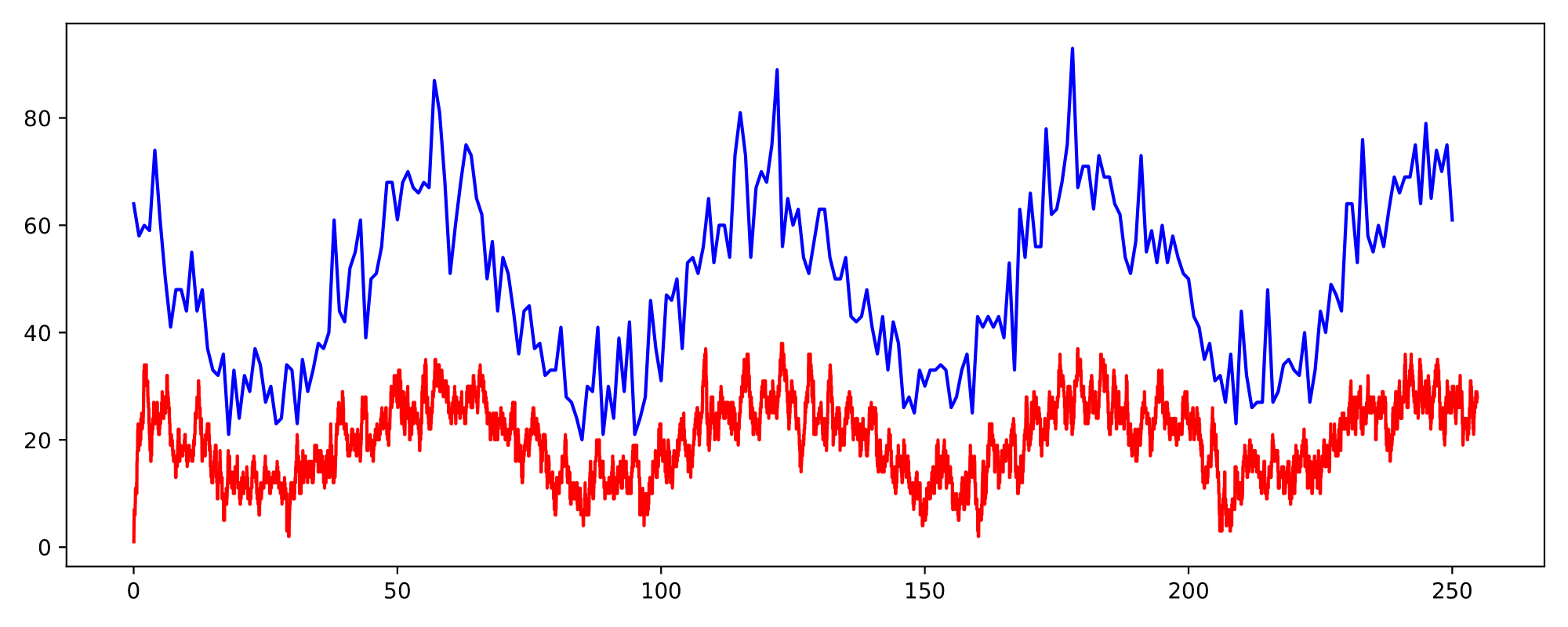}
    \caption{$s=4$, balk\:=\:60\%} \label{fig:c}
    \end{subfigure}\hspace*{\fill}
    \begin{subfigure}{0.48\textwidth}
    \includegraphics[width=\linewidth]{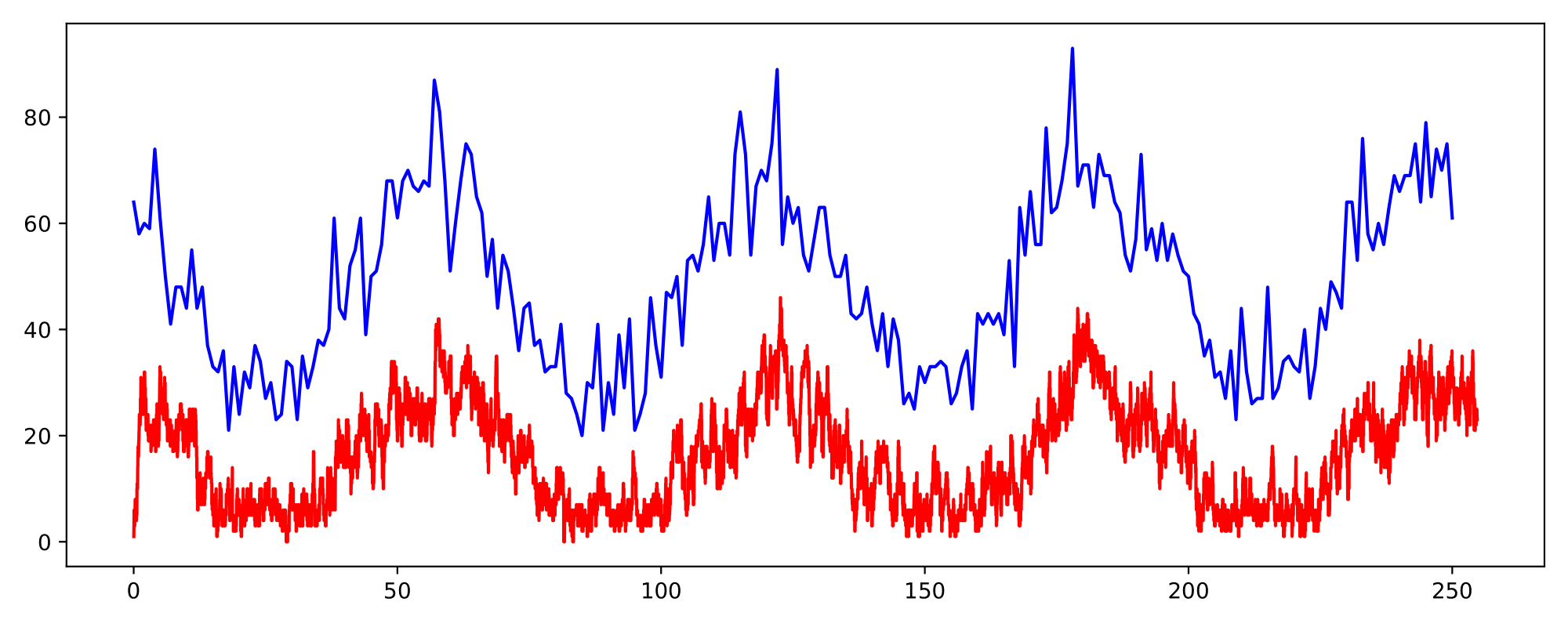}
    \caption{$s=8$, balk\:=\:25.7\%} \label{fig:d}
    \end{subfigure}
    
    \medskip
    \begin{subfigure}{0.48\textwidth}
    \includegraphics[width=\linewidth]{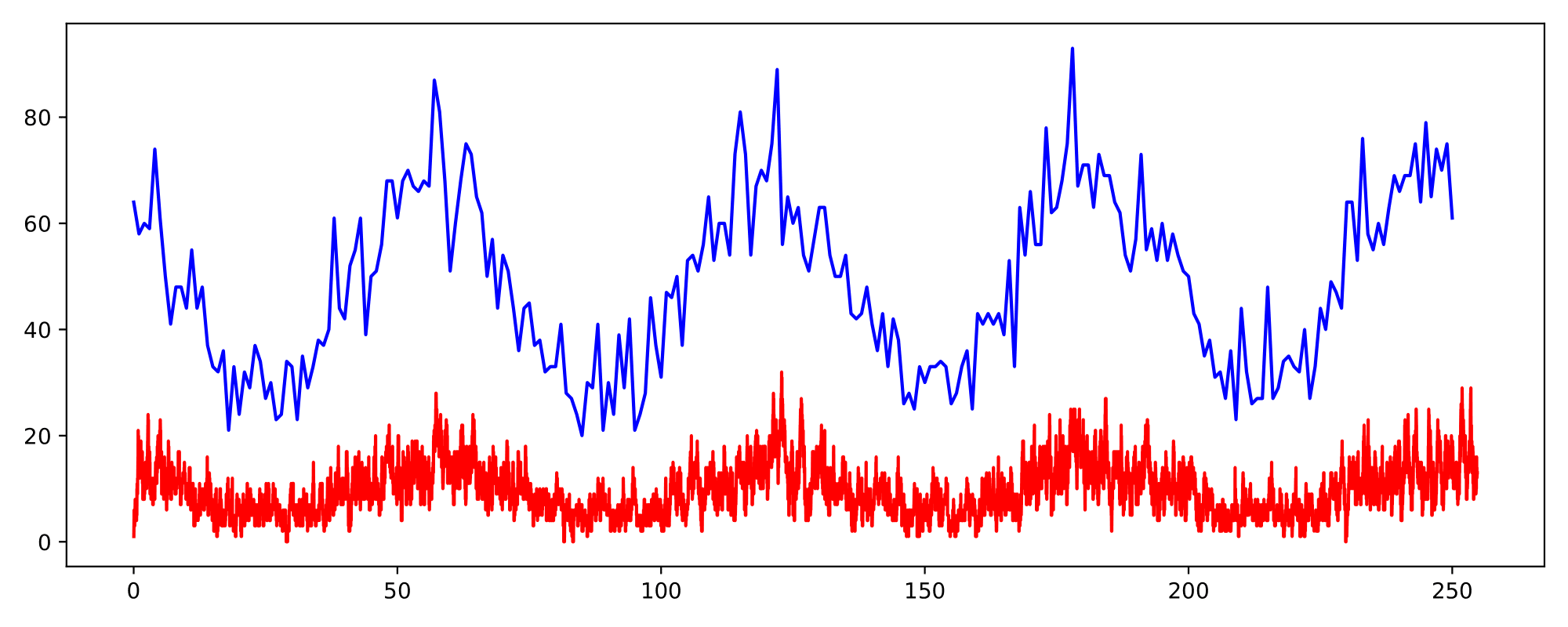}
    \caption{$s=16$, balk\:=\:0.8\%} \label{fig:e}
    \end{subfigure}\hspace*{\fill}
    \begin{subfigure}{0.48\textwidth}
    \includegraphics[width=\linewidth]{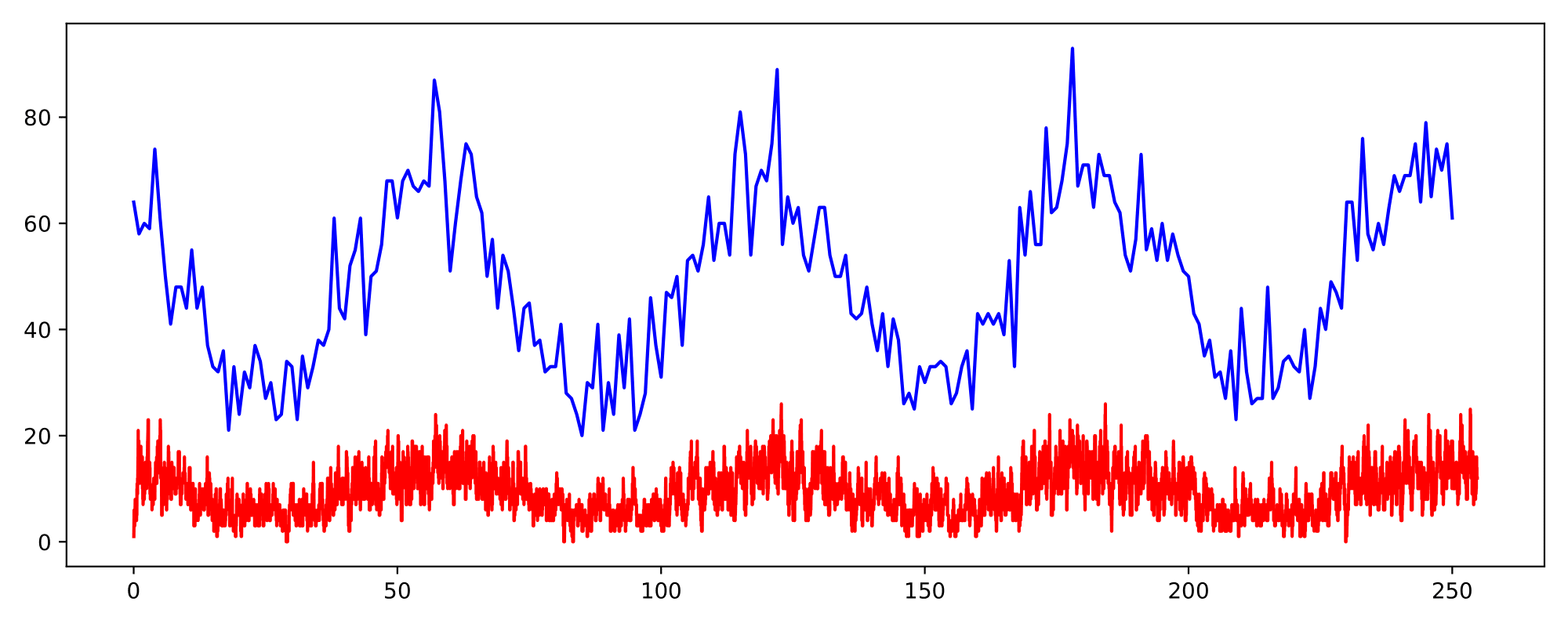}
    \caption{$s=32$, balk\:=\:0\%} \label{fig:f}
    \end{subfigure}

    % One legend for all graphs - Blue line arrival rate, red line is number of customers OR Explain in caption.
    
    \caption{Effect of impatience and service capacity on state of system. Here `balk' is the percentage of customers balking. Blue lines represents the arrival rate, and red lines the number of customers in the system} \label{fig:1}
    \end{figure}
    
    Figure \ref{fig:1} shows the time evolution of six M$_t$/G/$s$+H systems. 
    We throughout assume that the arrival rate is given through a sinusoidal with period $20\,\pi$, given by $\lambda_{\bs{\alpha}_0}(t) = 50+20\sin(1-0.1t)$. 
    Both the patience distribution function $H_{\bs{\theta}_0}(\cdot)$ and the service requirements distribution function $G(\cdot)$ are taken exponentially with mean 1. 
    The number of servers we vary: we consider $s \in \{1,2,4,8,16,32\}$. 
    The specific realizations of the arrival times, service requirements, and patience times are kept unchanged across all experiments.
    
    The blue line is a realization of the empirical arrival process with rate $\lambda_{\bs{\alpha}_0}(t)$. 
    For each integer-valued time point $n$, the y-coordinate of the blue line represents the total number of arrivals (balking plus non-balking, that is) that have occurred in $[n,n+1)$. 
    The red line plots the number of customers in the system as a function of time. 
    
    We proceed by examining how the red line behaves across the six instances. 
    For the cases $s=1$ and $s=2$, the waiting times are so high that a dominant fraction of customers balk. 
    By observing only the red line in Figures \ref{fig:a} and \ref{fig:b}, one cannot infer that that the arrival rate is actually large and non-homogeneous. 
    In fact, if the administrator is unaware of balking they may even conclude that the arrival process is stationary and not time-dependent.
    Furthermore, from a statistical perspective, balking customers help estimate the patience parameters $\bs{\theta}_0$, but not the arrival rate parameters $\bs{\alpha}_0$.
    On the other hand, for the cases $s=16$ and $s=32$, the service capacity is much higher than the arrival rate and therefore, so that only a negligible percentage of the arriving customers balk. 
    As a consequence, the estimates of $\bs{\alpha}_0$ are precise whereas those of $\bs{\theta}_0$ are poor. 
    In a way, the `best' instances are therefore the middle ones (i.e., $s=4$ and $s=8$), where both $\bs{\alpha}_0$ and $\bs{\theta}_0$ allow a reasonably accurate estimation. \hfill\mbox{$\Diamond$}
    \begin{figure}
        \centering
        % \includegraphics[width = 12cm, height = 5cm]{Project 1/Images/Introduction Images/hyperexponential - identifiability issues.png}
        % \vspace{-1cm}

        \begin{tikzpicture}
          \begin{axis}[
            xlabel=$x$,
            ylabel=$\prob(Y \leq x)$,
            xmin=0,
            xmax=16,
            ymin=0,
            ymax=1,
            xtick={0,2,...,16},
            grid=both,
            axis lines=left,
            width=14cm,
            height=7cm,  % Adjust the height to make the y-axis smaller
            legend style={
              at={(0.5,0.3)},  % Adjust the position of the legend within the plot
              anchor=north,
            }
          ]
          
          \addplot[domain=0:16, samples=400, smooth, blue] {1 - 0.8*exp(-x) - 0.2*exp(-0.1*x)};
          \addplot[domain=0:16, samples=400, smooth, red] {1 - 0.85*exp(-0.91*x) - 0.15*exp(-0.05*x)};
          
          \legend{True distribution, Estimated distribution}
          
          \end{axis}
        \end{tikzpicture}
        
        \caption{Identifiability challenge while estimating patience distribution}
        \label{fig:identifiability_challenge_demonstration}
    \end{figure}
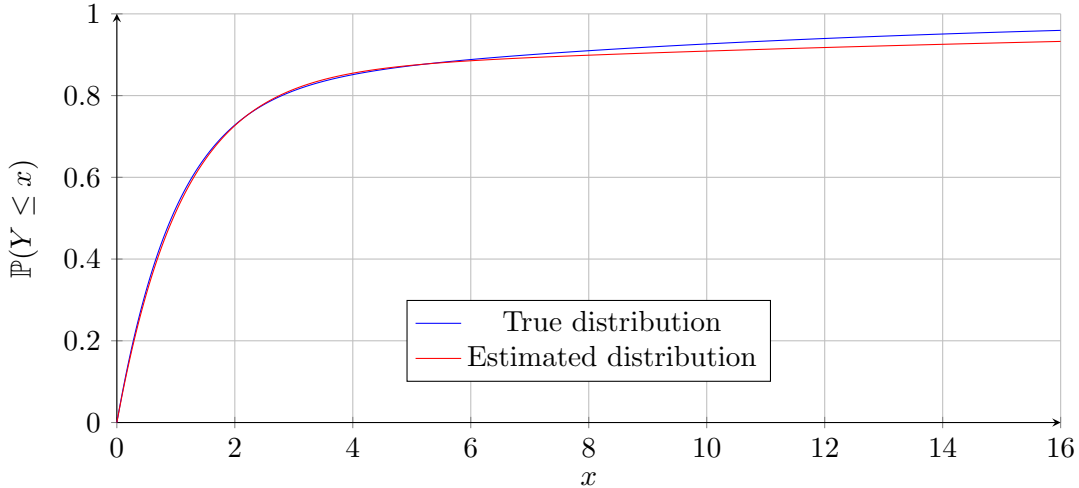
\end{example}

\begin{example}\label{example: identifiability issues}
    We simulate an M$_t$/G/$s$+H queue characterized by $\lambda_{\bs{\alpha}_0}(t) = 50 + 15\sin(4-0.1t) + 10\sin(1-0.5t)$, $H_{\bs{\theta}_0}(x) = 1 - 0.8e^{-x} - 0.2e^{-0.1x}$, $G \sim \text{Exp}(0.4)$, and $100\,000$ arrivals (balking plus non-balking, that is). 
    The true parameter vectors are therefore $\bs{\alpha}_0 = (50, 15, 10, 4, 1)$ and $\bs{\theta}_0 = (0.8, 1, 0.1)$. 
    By applying our estimation procedure, described later in this paper, the estimates that we obtain are $\hat{\bs\alpha} = (49.44, 14.95, 10.41, 4.00, 0.98)$ and $\hat{\bs\theta} = (0.85, 0.91, 0.05)$. 
    Observe that $\hat{\bs\alpha}$ is reasonably accurate, whereas $\hat{\bs\theta}$ is relatively far from $\bs{\theta}_0$. 
    However, now observe in Figure \ref{fig:identifiability_challenge_demonstration}, where the blue and red lines plot the distribution functions $H_{{\bs{\theta}_0}}(x)$ and $H_{\hat{\bs\theta}}(x)$ respectively, that the estimate $\hat{\bs\theta}$ being inaccurate does not mean that the corresponding estimate $H_{\hat{\bs\theta}}(x)$ is. 
    Put differently, the inaccuracy in the estimation of $\bs{\theta}_0$ is in a way harmless, because $H_{\bs{\theta}_0}(x)$ is estimated with great precision.
    One could say that we are facing a `quasi-unidentifiability' issue: multiple parameter vectors lead to `almost identical' models. 

    The system's congestion level plays a crucial role in the ability to accurately estimate the patience parameters.
    In this simulation the maximum virtual waiting time observed by a customer is 5.304.
    This is less than 6, which is, as revealed by Figure \ref{fig:identifiability_challenge_demonstration}, the value beyond which one can really differentiate between $H_{\bs{\theta}_0}(x)$ and $H_{\hat{\bs\theta}}(x)$.
    When increasing the number of arrivals, the maximum observed virtual waiting time will increase, thus reducing the gap between $\hat{\bs\theta}$ and $\bs{\theta}_0$. \hfill\mbox{$\Diamond$}
    % \footnote{{\bf MM}: you said that the mean service time is 0.4, but is that what you meant, because it is 2.5, right? I found that argumentation quite confusing anyway, so I left it out. Let me know if this is OK.} 

    \iffalse
    This makes sense because we only have information on the behaviour of $H_{\bs{\theta}_0}(x)$ for $x \in [0, 5.3]$, for which $\hat{\bs\theta}$ is a great estimate for $\bs{\theta}_0$. 
    The issue discussed in this example is an intrinsic difficulty of the problem unrelated to our estimation procedure. 
    In this very model suppose we increase the mean service time from 0.4\footnote{{\bf MM}: you said: mean service time is 0.4, but is that what you meant?} to (say) 0.5 or 0.6, the observed virtual waiting times of customers will increase, and with this additional information, we expect to be able to obtain better estimates for $\bs{\theta}_0$. \hfill\mbox{$\Diamond$} \fi
    
    %\textcolor{blue}{\textbf{(This is a very good example, but I am not sure it is appropriate at this stage when the reader does not really know what our model and estimator are exactly.  )}}
    
\end{example}

\subsection{Contributions}

This paper succeeds in devising an estimation methodology that fulfils the requirements~(a),~(b), and~(c) defined above. 
The proposed approach parametrically estimates the arrival rate and patience distribution parameter vectors $(\bs{\alpha}, \bs{\theta})$ in an M$_t$/G/$s$+H system by only observing information associated with the non-balking customers. 
It thus extends previous work \cite{inoue2023estimating} by incorporating a time-dependent customer arrival rate. 
Due to this time-dependence one cannot use a proof technique in which one has assumed that the system has reached stationarity. 
In greater detail, the contributions are the following:

% Objective
% Wehave generated a method in which we parametrically estimate ....
% M/G/s models are not tractable. Transient or stationary analysis cannot be done. Yet we get a conditional likelihood. 
% Distribution of X is very messy, full likelihood analysis is very hard.

\begin{itemize}
    \item[$\circ$] Our estimation procedure is based on maximum likelihood.
    One would initially think that such an approach should make use of closed-form (transient or stationary) results for the M$_t$/G/$s$+H queue, but those are hardly available; even the (simpler) M/G/$s$ system is essentially intractable. 
    We circumvent this problem, however, by constructing a closed-form expression for the conditional log-likelihood pertaining to the state-dependent arrival process.
    \vspace{-3mm}

    % No closed form stationary results required.

    \item[$\circ$] As mentioned before, due to the periodic arrival rate, one cannot assume the system to be in stationarity. 
    We remedy this complication by decomposing the sample path of the queueing system variables into distinct regeneration cycles, which are periods of time after which the system probabilistically resets. 
    This allows us to introduce, out of sequences of dependent observations, an i.i.d.\ structure, thus facilitating the use of existing results (laws of large numbers, central limit theorems) so as to establish performance guarantees of the resulting estimator. 
    For arrival rate functions and patience distributions satisfying mild regularity conditions and under other natural assumptions, we prove the strong consistency of the maximum likelihood estimator.
    We also prove asymptotic normality of the corresponding estimation error (scaled by $\sqrt{n}$), which is the fastest convergence rate possible in a parametric setting. \vspace{-1.3mm}

    \noindent
    In addition, we have developed a number of verifiable assumptions, based solely on the model primitives, under which the above asymptotic analysis is valid.
    \vspace{-3mm}

    \item[$\circ$] Our model allows arriving customers to base their joining decision on a delay announcement which is any reasonable function of the complete state of the system. 
    In particular, let $\mathcal{R}(t) \in \Omega$ encode the full information of the system at time $t$ (for example, the residual service times and order of service for all customers present in the system at time $t$), and let $\psi: \Omega \mapsto \mathbb{R}$ be a Borel function. 
    Then $\Delta(t)$ takes the form $\psi\big(\mathcal{R}(t)\big)$.
    Concrete examples of $\psi(\cdot)$ include functions which calculate the the exact or noisy estimates of the expected waiting time given the state of the system at time $t$.
    An important example is that in which $\Delta(t)$ is estimated based on the number of customers (say $q$) in the system at time $t$.
    If $q < s$, then there is at least one idle server and hence $\Delta(t) = 0$. 
    If $q \geq s$, then a (biased, but probably reasonable) delay announcement could be $\Delta(t) = (q-s+1)g/s$, where $g$ is the mean service time.
    We can compile this into a single delay announcement function of the form $\Delta(t) = \psi(\mathcal{R}(t)) = (L(t)-s+1)^{+}g/s$, where $L(t)$ is the number of customers in the system at time $t$.
    \vspace{-3mm}

    % Delay approximation given system size 
    
    \item[$\circ$] We discuss several interesting insights obtained from numerical experiments, such as the identification of instances where estimation of $(\bs{\alpha}_0, \bs{\theta}_0)$ is intrinsically hard (cf.\ the discussion in Example~\ref{example: best and worst instances for estimation}), and how `near-unidentifiability' issues play a role in estimation (cf.\ the discussion in Example~\ref{example: identifiability issues}). 
    We also assess the estimation accuracy in situations when exact virtual waiting times are not known.
\end{itemize}

\subsection{Relevant Literature}
% Relevant Literature

% Category 1
% 1) Asanjarani, Nazarathy - Survey of parameter and state estimation in queues
% 2) Bingham, Pitts - Non-parametric estimation for M/G/inf queue
% 3) James Pickands, Robert Stine - Estimation for an M/G/inf queue with incomplete information
% 4) Estimation for queues from queue length data - Ross, Taimre, Pollett
% 5) Ravner, Mandjes, Boxma - Input estimation to Levy driven queue by Poisson sampling

% Category 3
% 6) Consistent estimation of intensity function of cyclic Poisson process - Mangku
% 7) Yoshida, Hayashi - Robust estimation in poisson process ......
% 8) ASYMPTOTIC NORMALITY OF A KERNEL-TYPE ESTIMATOR FOR THE INTENSITY OF A PERIODIC POISSON PROCESS - Mangku
% 9) Least squares estimation for NHPP - Kuhl, Wilson
% 10) Kutoyants?
% 11) Estimating and simulating Poisson processes having trends or multiple periodicities - Kuhl, Wilson, Johnson
% 12) Estimating period of cyclic Poisson process - Mangku

% Category 5
% 12) Generic uniform convergence - Andrews
% 13) Asymptotic behaviour of queues - Whitt, Heyman
% 14) Michel Benaim, Tobias Hurth

% Category 4

% 1) Estimation of input of queue based on queue observations, partial information

% Statistical context - observe i.i.d.\ rvs while in queueing context only some things dependent on them.

% 3) Estimation of periodic poisson process (not necessarily queueing)
% 4) Papers on patience and abandonment/censored/truncated data
% 5) Papers needed in proofs and modelling - Andrews, ward whitt

% Larson (1990) - Queue inference engine. 
% Hidden markov models supplying input to queue.

This paper remedies two key shortcomings of the predecessor paper \cite{inoue2023estimating}, where estimation of constant arrival rate and patience distribution was carried out in the context of service system modelled by an M/G/$s$+H queue. 
In the first place, we lift the assumption of the customer arrival rate being constant over time by allowing a periodic arrival rate. 
Second, \cite{inoue2023estimating} assumes a complete information setting, in that each arriving customer knows their exact prospective waiting time, which is often infeasible in practical situations. 
Importantly, our paper demonstrates that our estimation technique works even when customers have incomplete information, and estimate their prospective waiting time based on a partially observed system state.
%At the methodological level, the above two improvements require substantial ... 

An intensively studied topic at the intersection of operations research, applied probability and statistics, concerns the estimation of model primitives based on the (potentially partially observed) evolution of a stochastic process. 
For instance, \cite{woodroofe1985estimating} estimates distribution functions with truncated data. 
A specific subfield focuses on inference problems for queues, where the objective is to estimate the input process based on queue length or workload observations; see for instance the detailed survey  \cite{asanjarani2021survey}. 
Each study in this survey is characterized by its own specific underlying model, observation process, and estimation objectives. 
For instance, \cite{bingham1999non} studies an M/G/$\infty$ system, observing specific aspects of the evolution of the number of customers present, with the objective to estimate the arrival rate and service-time distribution (making the problem semi-parametric). 
In \cite{ross2007estimation} the  M/M/$s$ queue length is observed in the regime that $c$ is large $c$, and employs a diffusion approximation for estimating the arrival and service rates. 
In \cite{larson1990queue} one observes the so-called transactional data (i.e., times of service initiations and completions), and based on these one estimates various queue-related statistics. 
In \cite{ravner2019estimating} the workload process of a Lévy-driven queue is sampled at Poisson instants, providing observations that can be used in a method-of-moments estimator for the characteristic exponent of the driving Lévy process. 
Finally, \cite{pickands1997estimation} studies a discrete time M/G/$\infty$ queue with the objective of estimating the arrival rate and service distribution. It uses an entropy-motivated geometric approximation to employ algorithms used for estimation of hidden Markov models.
In general one aims to develop estimation techniques with certain concrete performance guarantees, in particular consistency or asymptotic normality.

A complication we come across in our setting with impatient customers, is that we wish to estimate the {\it total} arrival rate, i.e., including the arrival rate corresponding to the customers that balk due to impatience. 
In the queueing literature, impatience has been studied from several perspectives; in line with the focus of the present paper, we discuss work in which balking is due to arriving customers facing a high prospective waiting time. 
In \cite{baccelli1984single} stability condition is established for M$_t$/G/1+H systems with a periodic arrival rate. 
Then, \cite{liu2006explicit} derives the steady-state workload distribution, and hence implicitly also a stability condition, for the M/Ph/1 queue (in which service requirements are of phase type) with a constant patience level across all customers. 
In \cite{liu2008busy} the Laplace-Stieltjes transform of the busy period of M/Ph/1 queues is derived, as a limiting case of an associated fluid model. 
In \cite{boxma2010busy} the busy period distribution in an M/G/1+H system is identified, for various choices of patience distribution $H(\cdot)$. 
The problem of estimating customer (im)patience in service systems has also been given significant attention; see e.g., \cite{mandelbaum2013data}, while further references can be found in \cite{inoue2023estimating}.

Time-varying multi-server queues also have attracted some attention in the queueing literature.
Where \cite{whitt2016queues} provides an extensive bibliography on queues time-varying arrival rates, we highlight some key works. 
In \cite{massey1994analysis} the focus is on the distribution of the number of busy servers in an M$_t$/G/$s$/$0$ loss system, which has $s$ servers and no extra waiting space (i.e. there can be at most $s$ customers in the system at any time) and a non-homogeneous Poisson arrival process, using a modified offered-load approximation. 
Reference \cite{massey1994unstable} studies the asymptotic behaviour of arrival, departure and waiting times in unstable queues with non-stationary arrival rates. 
A branch of research focuses on estimation; in this context, \cite{massey1996estimating} studies the approximation of a non-homogeneous arrival rate by a piecewise linear counterpart. 
In \cite{kim2013estimating} a time-varying Little's law is used to develop estimators for waiting times in systems with time-varying arrival rates. 
The main objective of \cite{jennings1996server} lies in staffing: it presents an algorithm to select the number of servers as a function of time, so as to meet given performance targets. In \cite{chen2023can} it is shown that periodic sinusoidal arrival rates have a good empirical fit with arrival data from various service systems.

% Add a bit more about statistical procedures to estimate stuff you don't see. Not necessarily queueing. 

Poisson processes play a pivotal role in queueing theory, primarily with a constant rate, but the case of a time-dependent rate has received substantial attention as well.
In the statistics literature, the case of non-homogeneous, potentially periodic, Poisson processes has been covered by for instance \cite{helmers2003estimating,helmers2009estimating,helmers2003consistent,kuhl2000least,yoshida1990robust}; these papers differ in terms of the underlying model (which can be either parametric or non-parametric), the nature of the observation process, and the estimation technique relied upon. 
A textbook treatment of inference techniques for broad classes of Poisson processes can be found in~\cite{kutoyants2012statistical}.

We conclude this account of the existing literature by mentioning a number of works that have been used in our consistency and asymptotic normality proofs. 
In the first place, we rely on results that provide insight into our M$_t$/G/$s$+H queue; in particular, \cite{heyman1984asymptotic} shows how the regeneration cycles can be dealt with in time-varying queues. 
In addition, we use results from \cite{andrews1992generic} in our strong consistency proof, and results from \cite{benaim2022markov} on establishing the recurrence of specific sets (by which we can derive properties which play a key role in our proofs).

\subsection{Paper Organization}

In Section \ref{section: Model and preliminaries}, we provide a model description and present preliminaries. 
Section \ref{section: Parametric estimation procedure} details the MLE-based estimator, along with theorems regarding their strong consistency and asymptotic normality.
From the methodological perspective, Section \ref{section: Constructing i.i.d regeneration cycles} can be seen as the heart of this paper: we describe our procedure to decompose the sample path into distinct regeneration cycles, which is a central element in the proofs in Section \ref{section: strong consistency}. Section \ref{section: strong consistency} also contains an exhaustive list of model assumptions under which the theorems and proofs hold.
A reader who is not interested in the asymptotic analysis of the estimator can skip Sections \ref{section: Constructing i.i.d regeneration cycles}, \ref{section: strong consistency}, and go directly to Section \ref{section: joining decisions based on incomplete information}, where we establish that the theorems on strong consistency and asymptotic normality of the estimators that are stated in Section \ref{section: Parametric estimation procedure} hold also for a general delay announcement, under an additional assumption.
% In Section \ref{section: joining decisions based on incomplete information}, we discuss a general model framework where each customer at the time of joining does not know their exact waiting time but rather uses a proxy. We establish here that strong consistency and asymptotic normality of the estimators is valid even in this general setting.
Section \ref{section: numerical experiments} contains a series of simulation experiments, confirming our estimation procedure's efficacy.
In Section \ref{section: conclusion}, we discuss our findings, and provide directions for future research.
In Appendix \ref{section: analysis of Mt/G/s+H system}, we prove results related to a regeneration structure underlying the M$_t$/G/$s$+H system, which are required in our proofs. 
Appendix \ref{section: appendix} contains proofs some lemmas that are required in the proofs of consistency and asymptotic normality. 
% \footnote{{\bf SB}: need to rewrite this based on final version of paper.}

\section{MODEL AND PRELIMINARIES} \label{section: Model and preliminaries}

% For the sake of simplicity and ease of reading, for now, we consider the framework in which arriving customers are given exact information regarding their waiting time. 
% We do note, however, that the estimation procedure and subsequent asymptotic analysis can be carried out even when this is not assumed. 
% In this and the next sections we work under the `exact information assumption', but in Section \ref{section: joining decisions based on incomplete information} we discuss how similar computations can be performed to obtain estimators in setups where the joining decision relies on noisy estimates of the waiting time, or on observations of the number of customers present in the system.

We model our service system via a queue with $s\in{\mathbb N}$ servers. 
Potential customers arrive according to a Poisson process with a time-dependent arrival rate $\lambda_{\bs{\alpha}}(t)$, parametrized by the vector $\bs{\alpha} \equiv (\alpha_1, \alpha_2, \ldots, \alpha_k ) \in \mathbb{R}^k$ for some known $k\in{\mathbb N}$.
We consider the setting in which the arrival rate is periodic, where the period is known; by normalizing time we can assume without loss of generality the period is one, so that $\lambda_{\bs{\alpha}}(t+1) = \lambda_{\bs{\alpha}}(t)$ for all $t \geq 0$. 
Each customer is equipped with (1)~a service requirement that has cumulative distribution function $G(\cdot)$, and (2)~a patience level that has cumulative distribution function $H_{\bs{\theta}}(\cdot)$, where $\bs{\theta} \equiv (\theta_1, \theta_2, \ldots, \theta_p) \in \mathbb{R}^p$ (for some known $p\in{\mathbb N}$) represent the parameters of the patience distribution. 
The service requirements $\{B_i\}_{i\in{\mathbb N}}$ and patience levels $\{Y_i\}_{i\in{\mathbb N}}$ are independent sequences of i.i.d.\ random variables, and the customer interarrival times $\{T_i\}_{i\in{\mathbb N}}$ are independent of $\{B_i\}_{i\in{\mathbb N}}$ and $\{Y_i\}_{i\in{\mathbb N}}$. 
Note that the interarrival times are neither independent of each other, nor identically distributed because of the arrival rate varying with time.
At each time $t$, the service operator announces $\Delta(t)$, which is an estimate of the delay before service which an incoming customer at time $t$ can expect.
Based on this, they join the system if $Y \geq \Delta(t)$, otherwise they balk. 
We now discuss this delay announcement in detail.

\subsection{Delay Announcement}
The delay announcement at time $t$ is calculated based on the complete state of the system $\mathcal{R}(t)$ at time $t$. Let $\Delta(t) = \psi\big(\mathcal{R}(t)\big)$, where $\psi: \tilde{\Omega} \mapsto \mathbb{R}_{+}$ is a Borel-measurable function. %We now discuss in detail about $\mathcal{R}(t), \tilde{\Omega}$ and $\psi$.
By $\mathcal{R}(t)$, we mean a vector that represents the complete state of the system at time $t$. Define
\[\mathcal{R}(t) = (r_1, r_2, \ldots, r_k) \in \tilde{\Omega} = \bigcup_{k=0}^{\infty} \mathbb{R}_{+}^{k},\] where $k \in{\mathbb N}_0$ is the number of customers in the system at time $t$, and $r_1, \ldots, r_k$ represent their residual service times. If $k=0$, then there are obviously no residual service times. 
If $k \in\{1,\ldots,  s\}$, then all customers in the system are in service. If $k > s$, then customers corresponding to service times $r_1, \ldots, r_s$ are in service, and customers corresponding to $r_{s+1}, r_{s+2}, \ldots,r_k$ are ordered according to their service priority (in the first-come-first-serve setting this means in order of arrival) and their residual times equal their original service times. 

Under this construction, $\mathcal{R}(t)$ represents the complete state of the system at time $t$, available to the service operator at $t$, based on which, the delay announcement $\Delta(t)$ is constructed. 
One could formalize this by working with a function $\psi$ that maps the observed state of the system to a metric, which serve as a `delay proxy'. 
For a customer arriving at time $t$, the following list suggests some of the many possible information metrics based on which an arriving customer can decide whether or not to join the system: with $L(t) = \big\vert \mathcal{R}(t)\big\vert$ the number of customers present at time $t$, 
% \footnote{{\bf MM}: Aren't 2 and 3 equivalent? Note that $\psi_2(x)\leq \psi_2(y)\Leftrightarrow \psi_3(x)\leq \psi_3(y)$. It just amounts to rescaling $Y$ by ${\mathbb E}B/s$, right? And is there a reason why we write $\vert \mathcal{R}(t)\vert$ instead of $L(t)$?}
\begin{enumerate}
    \item $\psi\big(\mathcal{R}(t)\big)$ equals the virtual waiting time  $V(t)$.
    \item $\psi\big(\mathcal{R}(t)\big)$ equals some noisy estimate $\tilde V(t)$ of $V(t)$. For example, one reasonable estimate of the waiting time could be the delay proxy
    \begin{align*}
        \psi\big(\mathcal{R}(t)\big) &= \frac{1}{s}\big(L(t) -s + 1\big)^{+}\,{\mathbb E} B.
    \end{align*}
    \item $\psi\big(\mathcal{R}(t)\big)$ equals number of service completions that the customer has to wait for:
    \begin{align*}
        \psi\big(\mathcal{R}(t)\big) = \big(L(t)-s + 1\big)^{+}.
    \end{align*}
\end{enumerate}
From a theoretical point of view, the second and third information metric are the same, as they differ by a constant factor.
From a practical point of view, however, they reflect different messages, namely an expected delay message or simply the number customers in the queue.
It is noted that in applications one can encounter both information metrics.
% Given that the complete state of the system at the time of arrival of a customer is given by the vector $\bs{r}$, the probability of them joining is $\tilde{H}_{\bs{\theta}}\big(\psi(\bs{r})\big)$.
% Given that a customer arrives to the system and notes delay announcement $\delta$. Then, the probability of them joining is $\tilde{H}_{\bs{\theta}}(\delta)$.

\begin{remark}
The assumption that $\Delta(t)$ is a function of $\mathcal{R}(t)$ can be weakened further. Define $\sigma(t):= \big\{\mathcal{R}(u): u \in [T_s,t)\big\}$, where $T_s$ is the start time of the ongoing regeneration cycle, i.e. the cycle to which time $t$ belongs (We discuss the concept of a regeneration cycle for an M$_t$/G/s+H system later in this paper).
In other words, $\sigma(t)$ stores all information about the system since the start of the ongoing regeneration cycle.
We can then also allow $\Delta(t)$ to be a function of $\sigma(t)$.
A commonly used delay proxy for an incoming customer at time $t$, see \cite{ibrahim2009real,ibrahim2011wait}, is the largest of the times spent waiting so far by the customers present in the system at time $t$. 
To formally define this delay proxy, let $\{\mathcal{W}(t): t \geq 0\}$ store the amount of time that the customers with residual service times corresponding to $\mathcal{R}(t)$ have waited in the system until time $t$, i.e., \[\mathcal{W}(t) = (w_1, \cdots, w_k),\] is the time spent waiting so far by each of the $k$ customers present in the system at time $t$. 
Then, observe that $\mathcal{W}(t) \subseteq \sigma(t)$. We therefore define another delay proxy as follows:
\begin{enumerate}
    \item[4.] $\psi(\sigma(t))$ is the maximum amount of time spent waiting amongst all customers present in the system at time $t$, i.e.
    \begin{align*}
        \psi(\sigma(t)) = \max_{i=1,\cdots,L(t)} w_i.
    \end{align*}
\end{enumerate}
% For 4, stricter conditions will be needed. 1,2,3 follow immediately by a similar proof.
Observe that this delay proxy increases linearly between departure times, and can only decrease at departure times. In the rest of the paper however, for ease of understanding, we continue with our original assumption $\Delta(t) = \psi\big(\mathcal{R}(t)\big)$.
\end{remark}

We now proceed to explain a sample delay mechanism and its implications. 
As an example, we assume that $\Delta(t) = V(t)$, the virtual waiting time at time $t$, which is the exact waiting time experienced by a hypothetical customer who joins the system immediately after time $t$. 
The quantity $V(t)$ can alternatively be understood as the time it would take before at least one server becomes idle if no customers join the system after time $t$. 
That is, for any $t \geq 0$, the virtual waiting time can be defined by
\begin{align}\label{defn:VWT}
    V(t) = \inf \big\{u \geq 0: L(t) - \big(N_D(t+u) - N_D(t) \big) \leq s-1 \big\}
\end{align}
where $L(t)$ is the number of customers in the system (including those in service) at time $t$, and $N_{D}(t)$ the total number of customers that have finished their service by time $t$. 
% \footnote{{\bf MM}: it occurs to me that we have called this $L(t)$, right? Or is there a difference between $Q(\cdot)$ and $L(\cdot)$? Observe that in last part of the last appendix also $Q(\cdot)$ has been used.}
The virtual waiting time process $\{V(t)\}_{t\in{\mathbb R}}$ is a càdlàg (`right continuous with left limits') process. Customer $i\in {\mathbb N}$ joins the system if their patience is at least equally large as their virtual delay:
\begin{align}
    Y_i \geq V\Biggl(\sum_{j=1}^{i} T_j -\Biggr).
\end{align}
Figure \ref{fig:V(t)} provides an illustration of the virtual waiting time process of an M/G/2+H queueing system with (for ease) a constant arrival rate $\lambda = 10$, service requirements with unit mean, and patience levels that are deterministically equal to 1. 
The green (red) dots represent the non-balking (balking, respectively) customers. 
In the graph the virtual waiting time just prior to the arrival of non-balking (balking) customers is smaller (larger, respectively) than 1.
\begin{figure}
    \centering
    \vspace{-1cm}
    \includegraphics[width = 17cm, height = 9cm]{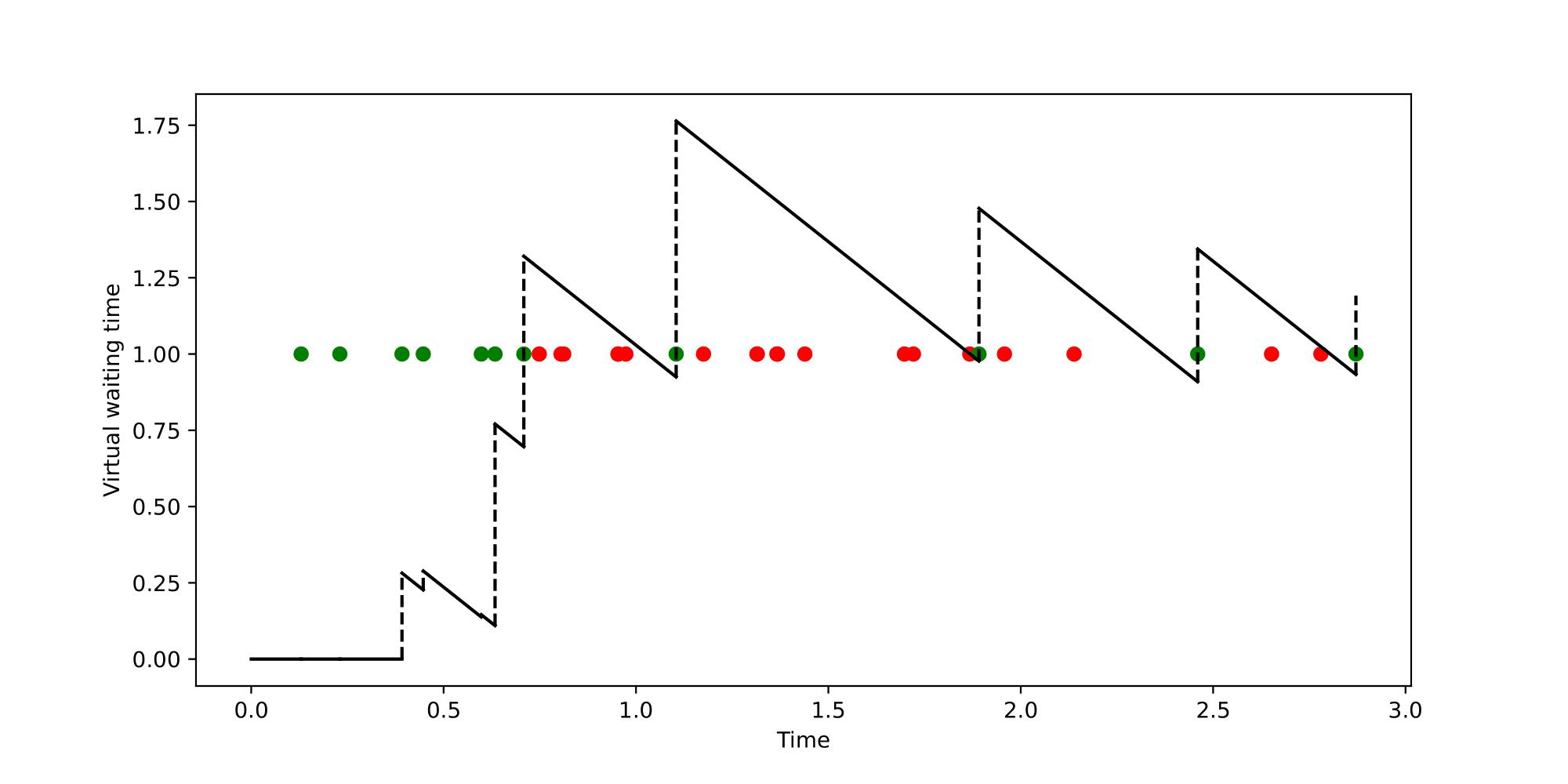}
    \vspace{-0.5cm}
    \caption{An example of virtual waiting time process $(s=2)$ and joining decisions of customers}
    \label{fig:V(t)}
\end{figure}

We now discuss the inference problem addressed in this paper.
The crucial feature is that only non-balking customers can be observed, i.e., the customers corresponding to the green dots in Figure \ref{fig:V(t)}.
Our estimation procedure relies on the state dependent distribution of the {\it effective} interarrival times, i.e., the times between two subsequent non-balking arrivals. 
In the sequel, we denote the sequence of effective interarrival times by $\{A_i\}_{i\in{\mathbb N}}$, and the corresponding effective arrival times by $\{\tilde{A}_i\}_{i\in{\mathbb N}}$, i.e., $\tilde{A}_i = \sum_{j=1}^{i} A_j$. 
Each non-balking arrival causes a non-negative increase in the virtual waiting time, which we can be seen as an `upward jump' in the virtual waiting time.
We denote the sequence of such upward jumps by $\{X_i\}_{i\in {\mathbb N}}$. 
We write
\begin{align*}
    X_i := V(\tilde{A}_i) - V(\tilde{A}_i-) = V(\tilde{A}_i) - W_{i},
\end{align*}
i.e., $W_i$ denotes the virtual waiting time immediately before the $i$-th effective arrival; realize that the waiting time corresponding to the $i$-th effectively arriving customer coincides with the virtual waiting time immediately before this arrival.
% Let $\big\{j_i\big\}_{i=1,2,\cdots}$ denote the indices of the non-balking arrivals, and $\big\{A_i\big\}_{i=1,2,\cdots}$ be the corresponding effective interarrival times. We start with an empty system at time $t=0$. So, the first arrival joins the system, i.e. $j_1 = 1$ and $A_1 = T_1$. For $i \geq 2$, $j_i$ is defined as follows:
% \begin{align*}
%     j_i = \inf\Bigg\{j \geq j_{i-1}+1: V\Bigg(\sum_{k=1}^{j} T_k\Bigg) \leq Y_j \Bigg\}
% \end{align*}

In this paper we aim to develop a procedure for estimating the parameter vector $\bs{\mu}_0 \equiv \big({\bs{\alpha}_0}, {\bs{\theta}_0}\big) \in \Theta\subseteq \mathbb{R}^{k+p}$ corresponding the arrival rate function and the patience distribution. 
Here it is assumed that some system data is observed over time. The exact observables depend on the specific choice of $\Delta(t)$. For instance, when $\Delta(t) = V(t)$, we observe the full queueing process over time, i.e., all effective arrivals and service times. However, when $\Delta(t) = \psi(L(t))$, i.e. the delay is some function of the number of customers in the system, then we observe only the process $\big\{L(t)\big\}_{t \geq 0}$.
% Here it is assumed that we observe the full queueing process over time, i.e., all effective arrivals and service times.
% \footnote{{\bf MM}: I don't understand this. Don't we observe much more, namely the virtual waiting time process? I may mix up things here. In (3.6) we compute the MLE conditional on $X$, right? You could also say that we observe the arrivals and departures, but then one needs to in addition have that one knows WHICH job has departed at any specific departure time.}
% This in particular means that we observe the arrival and departure epochs of the non-balking customers (and that we do not observe the balking customers at all). 
The objective is to use this information to somehow learn the parameters of the true arrival rate and the patience-level distribution. More concretely, we wish to devise statistical procedures for estimating ${\bs{\alpha}_0}$ and ${\bs{\theta}_0}$ (both corresponding to non-observable quantities) with provable performance guarantees.

\section{PARAMETRIC ESTIMATION PROCEDURE} \label{section: Parametric estimation procedure}
In the previous section, we specified and explained all the necessary variables for the demonstration of the estimation procedure in this section.
Our estimation procedure relies on constructing a conditional likelihood of the state-dependent arrival process. 
Finally, we make claims regarding asymptotic performance of the estimator.

Let the random variable\[ \left(N\big(\tilde{A}_{i-1},\tilde{A}_{i-1}+t \big] \ \big\vert \:\Delta\big(\tilde{A}_{i-1}\big) = \delta_{i-1}\right)\] denote the the number of effective (i.e., non-balking) arrivals in time interval $\big(\tilde{A}_{i-1},\tilde{A}_{i-1}+t \big]$, conditioned on the delay announcement immediately after the $(i-1)$-st effective arrival being $\delta_{i-1}$. Define the tail probability $\tilde{H}_{\bs{\theta}}(x) = \prob(Y \geq x)$. Then, relying on standard properties of time-dependent Poisson processes,
\begin{align*}
   \left(N\big(\tilde{A}_{i-1},\tilde{A}_{i-1}+t \big] \ \big\vert \:\Delta\big(\tilde{A}_{i-1}\big) = \delta_{i-1}\right) \sim \text{Poisson}\bigg(\int_{0}^{t} \lambda_{\bs{\alpha}}\big(u+\tilde{A}_{i-1}\big) \tilde{H}_{\bs{\theta}}\big(\Delta\big(\tilde{A}_{i-1}+u\big)\big) \ \mathrm{d}u \bigg),
\end{align*}
entailing that
\begin{align}
    \prob\Bigl(A_i >t \ \Big\vert \ \Delta\big(\tilde{A}_{i-1}\big)  = \delta_{i-1}\Bigr) &= \prob \bigg(N\big(\tilde{A}_{i-1},\tilde{A}_{i-1}+t\big] = 0 \ \Big\vert \ \Delta\big(\tilde{A}_{i-1}\big) = \delta_{i-1} \bigg)\notag
    \\
    &= \exp\biggl(-\int_{0}^{t} \lambda_{\bs{\alpha}}\big(u+\tilde{A}_{i-1}\big)\tilde{H}_{\bs{\theta}}\big(\Delta\big(\tilde{A}_{i-1}+u\big)\big) \ \mathrm{d}u \biggr).
\end{align}
The density of the $i$-th effective interarrival time, conditioned on the delay announcement just after the $(i-1)$-st effective arrival, thus reads
\begin{align}\label{eqn:density effective interarrival times}
    f_{A_i \,\vert\, \delta_{i-1}}(t) &= - \frac{\rm d}{{\rm d}t} \prob\big(A_i >t \ \big\vert \ \Delta\big(\tilde{A}_{i-1}\big) = \delta_{i-1} \big) \notag
    \\
    &= \lambda_{\bs{\alpha}}\big(t+\tilde{A}_{i-1}\big) \tilde{H}_{\bs{\theta}}\big(\Delta\big(\tilde{A}_{i-1}+t\big)\big)\exp\biggl(-\int_{0}^{t} \lambda_{\bs{\alpha}}\big(u+\tilde{A}_{i-1}\big)\tilde{H}_{\bs{\theta}}\big(\Delta\big(\tilde{A}_{i-1}+u\big)\big) \, \mathrm{d}u \biggr).
\end{align}
We now arrive at the following conditional likelihood function:
\begin{align}
    L_n&\big(\bs{\alpha}, \bs{\theta};{\bs A} \,\big\vert \,{\bs \Delta}\big) = \prod_{i=1}^{n} f_{A_i \,\vert\, \delta_{i-1}}(A_i) \notag
    \\
    = &\prod_{i=1}^{n} 
    \lambda_{\bs{\alpha}}\big(\tilde{A}_i\big) \tilde{H}_{\bs{\theta}}\big(\Delta\bigl(\tilde{A}_i^{-}\bigr)\big)\exp\biggl(-\int_{0}^{A_i} \lambda_{\bs{\alpha}}\big(u+\tilde{A}_{i-1}\big)\tilde{H}_{\bs{\theta}}\big(\Delta\big(\tilde{A}_{i-1}+u\big)\big) \, \mathrm{d}u \biggr),
\end{align}
so that the log-likelihood of the effective arrival process is given by
\begin{align}
    \ell_n \equiv \ell_n\big(\bs{\alpha}, \bs{\theta};{\bs A}\, \big\vert\, {\bs \Delta}\big) &= \log L_n\big(\bs{\alpha}, \bs{\theta};{\bs A}\, \big\vert \,{\bs \Delta}\big) \notag
    \\
    &= \sum_{i=1}^{n} \biggl(\log\lambda_{\bs{\alpha}}\big(\tilde{A}_i\big) + \log\tilde{H}_{\bs{\theta}}\big(\Delta\bigl(\tilde{A}_i^{-}\bigr)\big)\: -\notag \\
    &\hspace{0.42cm}\int_{0}^{A_i} \lambda_{\bs{\alpha}}\big(u+\tilde{A}_{i-1}\big)\tilde{H}_{\bs{\theta}}\big(\Delta\big(\tilde{A}_{i-1}+u\big)\big) \, \mathrm{d}u \biggr). \label{eqn:log_likelihood:general}
\end{align}
% We use the superscript `exact' to emphasize that these expressions for the likelihood and log likelihood correspond to the model where arriving customers know their exact waiting time; later in this paper we consider versions in which we work with approximations.
For any $n\in{\mathbb N}$, the maximum likelihood estimator is defined by
\begin{align}
    \hat{\bs\mu}_n := \big(\hat{\bs\alpha}_n, \hat{\bs\theta}_n\big) = \argmax_{(\bs{\alpha}, \bs{\theta}) \in \Theta} \ \ell_n\big(\bs{\alpha}, \bs{\theta};{\bs A} \,\big\vert \,{\bs \Delta}\big).\label{eq:MLE}
\end{align}

We now explicitly show how \eqref{eqn:log_likelihood:general} would look like, for a couple of delay announcement functions
\begin{itemize}
    \item $\Delta(t) = V(t)$. In this case, as discussed in Section \ref{section: Model and preliminaries}, $\Delta\big(\tilde{A}_{i-1}\big) = \delta_{i-1} = W_{i-1}$, where $W_{i-1}$ is the waiting time of the $(i-1)$-st effective arrival. Furthermore, assuming there are no effective arrivals in $\big(\tilde{A}_{i-1}, \tilde{A}_{i-1}+u\big]$, we have $\Delta\big(\tilde{A}_{i-1} + u\big) = \max\big\{0, \delta_{i-1}+X_{i-1}-u\big\}$, and consequently, $\Delta\bigl(\tilde{A}_i^{-}\bigr) = \max\big\{0, \delta_{i-1}+X_{i-1}-A_i\big\}$. As a result, \eqref{eqn:log_likelihood:general} becomes
    \begin{align}
        \ell_n^{\text{exact}} = \sum_{i=1}^{n} \biggl(\log\lambda_{\bs{\alpha}}\big(\tilde{A}_i\big) + &\log\tilde{H}_{\bs{\theta}}\big(W_{i-1}+X_{i-1}-A_i \big) \notag
        \\
        &- \int_{0}^{A_i} \lambda_{\bs{\alpha}}\big(u+\tilde{A}_{i-1}\big)\tilde{H}_{\bs{\theta}}(W_{i-1}+X_{i-1}-u) \, \mathrm{d}u \biggr). \label{eqn:log_likelihood:exact}
    \end{align}

    \item $\Delta(t) = \psi\big(L(t)\big)$, i.e. the delay at time $t$ is a function of the number of customers in the system at time $t$. The arrival rate therefore depends only on time between 2 successive arrival or departure events. Then, $\Delta\big(\tilde{A}_{i-1}\big) = \delta_{i-1} = \psi(q_{i-1})$, where $q_{i-1}$ is the number of customers in the system just after the $(i-1)$-st effective arrival.
    Let $\big\{M(t): t \geq 0\big\}$ denote the departure process, i.e., $M\big(t_1,t_2\big]$ represents the number of departures in $(t_1,t_2]$. 
    For $i=1,\cdots,n$, let \[\big\{D(i,k)\big\}_{k=1,2,\cdots, M(\tilde{A}_{i}, \tilde{A}_{i+1}]}\] denote the departure times in $\big(\tilde{A}_{i}, \tilde{A}_{i+1}\big]$, where $M(\tilde{A}_{i}, \tilde{A}_{i+1}] = \max\{0, q_i - q_{i+1}+1\}$.
    We can then write \eqref{eqn:log_likelihood:exact} as
    \begin{align*}
        \ell^{\text{est}}_n\big(&\bs{\alpha}, \bs{\theta}\big) = \sum_{i=1}^{n} \Bigg\{\log\lambda\big(\bs{\alpha},\tilde{A}_i\big) + \log\tilde{H}_{\bs{\theta}}\big(\psi(q_i-1)\big) - \int_{\tilde{A}_{i-1}}^{D(i-1,1)} \lambda(\bs{\alpha},u)\tilde{H}_{\bs{\theta}}\big(\psi(q_{i-1})\big) \mathrm{d}u
        \\
        & - \int_{D(i-1,1)}^{D(i-1,2)} \lambda(\bs{\alpha}, u)\tilde{H}_{\bs{\theta}}\big(\psi(q_{i-1}-1)\big) \mathrm{d}u - \cdots - \int_{D(i-1,M(\tilde{A}_{i-1}, \tilde{A}_i])}^{\tilde{A}_i} \lambda(\bs{\alpha}, u)\tilde{H}_{\bs{\theta}}\big(\psi(q_i-1)\big)\mathrm{d}u \Bigg\}.
\end{align*}
\end{itemize}

We now state the two main theorems of this paper, which claim strong consistency and asymptotic normality of the estimator \eqref{eq:MLE}. For the sake of brevity, the following theorems and the construction leading up to their proofs are stated for $\Delta(t) = V(t)$.
%Theorems  have been written for $\Delta(t) = V(t)$. 
The theorems rely on some mild and natural regularity assumptions on the arrival rate and patience distribution.
The extensive list of assumptions has been provided in Section \ref{section: strong consistency}.
In the asymptotic variance term we use an elaborate construction, to be described in Section \ref{section: Constructing i.i.d regeneration cycles}, featuring regeneration cycles. 
In particular, we will point out in detail what the objects $\bs{Z}_1$, $q(\bs Z_1, \bs\mu_0)$, and ${\mathbb E}C_1$ mean in Section \ref{section: Constructing i.i.d regeneration cycles}.

\begin{theorem}\label{theorem:consistency} 
    If Assumptions {\em (A1)--(A6)} are satisfied, then, as $n \rightarrow \infty$,
    \begin{align}
        \hat{\bs{\mu}}_n \overset{\text{\rm a.s.}}{\longrightarrow} {\bs{\mu}_0}. \label{consistency}
    \end{align}
\end{theorem}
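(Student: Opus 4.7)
The strategy is a Wald-type consistency argument adapted to the non-stationary setting via regeneration. I would normalize the log-likelihood $\ell_n(\bs{\mu})$ defined in \eqref{eqn:log_likelihood:general} by dividing by $n$, identify a deterministic limit function $Q(\bs{\mu})$ with $\ell_n(\bs{\mu})/n \to Q(\bs{\mu})$ almost surely, upgrade this to uniform convergence over a compact parameter set containing $\bs{\mu}_0$, and close the argument with an identifiability statement ($Q$ uniquely maximized at $\bs{\mu}_0$). Given uniform convergence and a well-separated maximum, standard Wald-type reasoning (invoking the generic uniform convergence machinery of \cite{andrews1992generic}) yields $\hat{\bs{\mu}}_n\to \bs{\mu}_0$ almost surely.

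\textbf{Regeneration decomposition and SLLN.} The core difficulty is that the per-arrival summands of \eqref{eqn:log_likelihood:general} are neither independent nor identically distributed: $\lambda_{\bs{\alpha}}(\cdot)$ is periodic and the virtual waiting time couples successive effective arrivals. I would invoke the regeneration structure to be built in Section \ref{section: Constructing i.i.d regeneration cycles} (with technical ingredients in Appendix \ref{section: analysis of Mt/G/s+H system}), namely a sequence of stopping times at which the system empties simultaneously with the periodic clock re-aligning, producing i.i.d.\ cycle blocks $\bs{Z}_1,\bs{Z}_2,\ldots$ of length $C_k$ and effective arrival count $N_k$. Writing $\ell_n$ as a sum of cycle contributions $q(\bs{Z}_k,\bs{\mu})$ plus negligible boundary terms from the incomplete final cycle, the SLLN applied to the i.i.d.\ blocks together with the elementary renewal theorem gives
\begin{equation*}
\frac{\ell_n(\bs{\mu})}{n}\ \overset{\text{a.s.}}{\longrightarrow}\ Q(\bs{\mu})\ :=\ \frac{{\mathbb E}\,q(\bs{Z}_1,\bs{\mu})}{{\mathbb E}\,N_1},
\end{equation*}
where finiteness of ${\mathbb E} C_1$ (hence of ${\mathbb E} N_1$) is what Assumption (A1) should supply, relying on the recurrence arguments of \cite{benaim2022markov} and the cycle analysis of \cite{heyman1984asymptotic}.

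\textbf{Uniform convergence and identification.} To promote pointwise to uniform convergence over $\Theta$, I would verify the hypotheses of Andrews' generic uniform LLN: the continuity in $\bs{\mu}$ of the integrand in \eqref{eqn:log_likelihood:general}, compactness of $\Theta$, and an integrable envelope dominating $\sup_{\bs{\mu}\in\Theta}|q(\bs{Z}_1,\bs{\mu})|$; these are exactly the roles played by Assumptions (A2)--(A5). The identification step (Assumption (A6)) would show that $Q(\bs{\mu}) < Q(\bs{\mu}_0)$ for every $\bs{\mu}\neq\bs{\mu}_0$. Because $q(\bs{Z}_1,\bs{\mu})$ is a log-likelihood of the effective arrival process of a full cycle under parameter $\bs{\mu}$, the difference $Q(\bs{\mu}_0)-Q(\bs{\mu})$ equals, up to the positive factor $1/{\mathbb E} N_1$, an expected Kullback--Leibler divergence between the state-dependent intensity processes $\lambda_{\bs{\alpha}_0}(t)\tilde{H}_{\bs{\theta}_0}(\Delta(t))$ and $\lambda_{\bs{\alpha}}(t)\tilde{H}_{\bs{\theta}}(\Delta(t))$ over one regeneration cycle. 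Jensen then forces equality of the two intensities almost everywhere on a cycle of positive measure, and the identifiability assumed in (A6) (injectivity of $\bs{\alpha}\mapsto\lambda_{\bs{\alpha}}$ and $\bs{\theta}\mapsto\tilde H_{\bs{\theta}}$ when evaluated along the realized virtual delays within a cycle) promotes this to $\bs{\mu}=\bs{\mu}_0$.

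\textbf{Main obstacle.} The most delicate step, I expect, is the regeneration construction itself: establishing that cycles exist, are genuinely i.i.d., have ${\mathbb E} C_1<\infty$ despite the periodic modulation, and that the residual (incomplete) final cycle at the $n$-th effective arrival contributes $o(n)$ almost surely to $\ell_n$. The identification step is the second non-routine piece, since it requires that the sample-path-visited virtual delays cover a set rich enough to distinguish $\tilde H_{\bs{\theta}}$ from $\tilde H_{\bs{\theta}_0}$ within a cycle --- a condition whose plausibility, as Example \ref{example: identifiability issues} illustrates, depends on the system being congested enough for nontrivial delays to occur.
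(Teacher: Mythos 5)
Your proposal follows essentially the same route as the paper: decompose the log-likelihood into i.i.d.\ regeneration-cycle contributions $q(\bs{Z}_j,\bs{\mu})$, apply the generic uniform law of large numbers of \cite{andrews1992generic} (via a Lipschitz-in-$\bs{\mu}$ envelope with integrable coefficient, which is the paper's Lemma \ref{D_Andrews_lemma_1}), normalize using $n/N_r \to \exptn C_1$, and conclude by Kullback--Leibler identifiability of the limit criterion. Your attribution of specific roles to (A1) and (A6) is off --- finiteness of $\exptn C_1$ comes from (A2) and (A4) via Theorem \ref{theorem:properties_of_system}, and (A6) is a Lipschitz condition on $\log\tilde H_{\bs\theta}$ used for the envelope rather than an identifiability hypothesis --- but this does not affect the substance of the argument.
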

\begin{proof}
    Provided in Section \ref{section: strong consistency}.
\end{proof}

\begin{theorem}\label{theorem:asymptotic_normality}
    If Assumptions {\em (A1)--(A10)} are satisfied, then, as $n \rightarrow \infty$,
    \begin{align}
        \sqrt{n}\big(\hat{\bs{\mu}}_n - {\bs{\mu}_0}\big) \overset{\text{\rm a.s.}}{\longrightarrow} \mathcal{N}\big(0, \,{\mathbb E} C_1 \,I({\bs{\mu}_0})^{-1} \big), \label{asymptotic_normality}
    \end{align}
    where \begin{equation}\label{eq:defI}I\big({\bs{\mu}_0}\big) := -\exptn\Big[\nabla^2 q\big({\bs{Z}_1}, {\bs{\mu}_0}\big)\Big],\end{equation} and $C_1$ is the number of customers served during a regeneration cycle.
\end{theorem}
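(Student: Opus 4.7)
The plan is to follow the classical mean-value route to MLE asymptotic normality, with the non-i.i.d.\ dependence handled via the regeneration-cycle decomposition from Section~\ref{section: Constructing i.i.d regeneration cycles}. By strong consistency (Theorem~\ref{theorem:consistency}) the estimator $\hat{\bs\mu}_n$ eventually lies in the interior of $\Theta$ and satisfies the first-order condition $\nabla\ell_n(\hat{\bs\mu}_n)=0$, so a Taylor expansion around ${\bs\mu_0}$ yields
\begin{align*}
\sqrt{n}\,(\hat{\bs\mu}_n-{\bs\mu_0}) \,=\, -\Big[\tfrac{1}{n}\nabla^{2}\ell_n(\tilde{\bs\mu}_n)\Big]^{-1}\cdot\tfrac{1}{\sqrt{n}}\,\nabla\ell_n({\bs\mu_0}),
\end{align*}
for some $\tilde{\bs\mu}_n$ between $\hat{\bs\mu}_n$ and ${\bs\mu_0}$. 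The rest of the proof reduces to (i) a CLT for the rescaled score with limiting covariance $I({\bs\mu_0})/{\mathbb E}C_1$, (ii) the a.s.\ limit $-\tfrac{1}{n}\nabla^{2}\ell_n(\tilde{\bs\mu}_n)\to I({\bs\mu_0})/{\mathbb E}C_1$, and (iii) Slutsky's theorem.

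For (i), I would partition the data into the i.i.d.\ regeneration cycles $\bs Z_1,\bs Z_2,\ldots$ of Section~\ref{section: Constructing i.i.d regeneration cycles}; letting $K_n$ denote the number of complete cycles before the $n$-th effective arrival, write
\begin{align*}
\nabla\ell_n({\bs\mu_0}) \,=\, \sum_{k=1}^{K_n}\nabla q(\bs Z_k,{\bs\mu_0}) \,+\, \nabla R_n({\bs\mu_0}),
\end{align*}
where $R_n$ collects the contribution of the incomplete trailing cycle. Because each $q(\bs Z_k,\cdot)$ is the genuine conditional log-likelihood of the effective arrivals within cycle $k$, the standard information-matrix identities give ${\mathbb E}\nabla q(\bs Z_1,{\bs\mu_0})=0$ and $\mathrm{Var}\!\big(\nabla q(\bs Z_1,{\bs\mu_0})\big)=-{\mathbb E}\big[\nabla^{2}q(\bs Z_1,{\bs\mu_0})\big]=I({\bs\mu_0})$. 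The multivariate CLT for the i.i.d.\ summands, combined with the elementary-renewal limit $K_n/n\to 1/{\mathbb E}C_1$ a.s.\ (the finiteness of ${\mathbb E}C_1$ is supplied by Appendix~\ref{section: analysis of Mt/G/s+H system}) and an Anscombe-type random-index argument, then delivers
\begin{align*}
\tfrac{1}{\sqrt n}\sum_{k=1}^{K_n}\nabla q(\bs Z_k,{\bs\mu_0}) \;\Rightarrow\; \mathcal N\!\big(0,\, I({\bs\mu_0})/{\mathbb E}C_1\big),
\end{align*}
provided $\nabla R_n({\bs\mu_0})/\sqrt n=o_p(1)$, which follows from square-integrability of a single cycle's score and the fact that the trailing cycle carries only $o_p(\sqrt n)$ of the total score mass.

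Part~(ii) proceeds analogously: the SLLN for the i.i.d.\ cycle-Hessians together with $K_n/n\to 1/{\mathbb E}C_1$ gives $-\tfrac{1}{n}\nabla^{2}\ell_n({\bs\mu_0})\to I({\bs\mu_0})/{\mathbb E}C_1$ a.s. To move the limit from ${\bs\mu_0}$ to $\tilde{\bs\mu}_n$ I would establish a uniform SLLN on a compact neighbourhood $U$ of ${\bs\mu_0}$ by exhibiting an integrable dominating envelope $\sup_{\bs\mu\in U}\|\nabla^{2}q(\bs Z_1,\bs\mu)\|\le M(\bs Z_1)$ with ${\mathbb E}M(\bs Z_1)<\infty$; Assumptions~(A7)--(A10) are tailored to supply exactly this. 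Combined with $\tilde{\bs\mu}_n\to{\bs\mu_0}$ a.s., Slutsky then yields
\begin{align*}
\sqrt n\,(\hat{\bs\mu}_n-{\bs\mu_0}) \;\Rightarrow\; \big(I({\bs\mu_0})/{\mathbb E}C_1\big)^{-1}\mathcal N\!\big(0,\,I({\bs\mu_0})/{\mathbb E}C_1\big) \,=\, \mathcal N\!\big(0,\,{\mathbb E}C_1\,I({\bs\mu_0})^{-1}\big),
\end{align*}
as claimed.

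The main obstacle, in my view, is not the MLE sandwich itself but the moment control needed to support each step in the non-stationary regime. The incomplete-cycle remainder must vanish after division by $\sqrt n$, which is strictly more demanding than the analogous estimate used for consistency. Moreover, the envelope integrability has to hold across random-length cycles whose calendar duration is dictated by the periodic rate $\lambda_{\bs\alpha}(\cdot)$ and by the tail of $H_{\bs\theta}$, so that the stationary-queue arguments of \cite{inoue2023estimating} do not transfer directly. The regenerative construction of Appendix~\ref{section: analysis of Mt/G/s+H system}, together with (A7)--(A10), is precisely what is needed to secure the finite second moment of $\nabla q(\bs Z_1,{\bs\mu_0})$ and the integrable Hessian envelope that make the skeleton above rigorous.
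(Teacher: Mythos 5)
Your proposal is correct and follows essentially the same route as the paper: the classical mean-value/score expansion at $\hat{\bs\mu}_n$, the CLT for the i.i.d.\ cycle scores $\nabla q(\bs Z_j,{\bs\mu_0})$ with the information-matrix identity $\mathrm{Var}(\nabla q)=-\exptn[\nabla^2 q]$, the SLLN for the cycle Hessians, and the final rescaling via $n/N_r\to\exptn C_1$. The only differences are bookkeeping refinements --- your explicit treatment of the incomplete trailing cycle via an Anscombe-type argument and your call for a uniform envelope to move the Hessian limit from ${\bs\mu_0}$ to $\tilde{\bs\mu}_n$ --- both of which the paper handles implicitly (it normalizes by $\sqrt{N_r}$ directly using the identity $\ell_n=\sum_{j=1}^{N_r}q(\bs Z_j,\bs\mu)$ and applies the pointwise SLLN together with $\tilde{\bs\mu}_n\to{\bs\mu_0}$), so if anything your version is slightly more careful on these two points.
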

\begin{proof}
    Provided in Section \ref{section: strong consistency}.
\end{proof}

For a general delay announcement $\Delta(t) = \psi\big(\mathcal{R}(t)\big)$, Theorems \ref{theorem:consistency} and \ref{theorem:asymptotic_normality} are generalized in Proposition \ref{propn} under an additional assumption (A11). 
We direct the reader to Section \ref{section: joining decisions based on incomplete information} for more details.
Sections \ref{section: Constructing i.i.d regeneration cycles} and \ref{section: strong consistency} work towards proving the theorems stated above, and therefore work with $\Delta(t) = V(t)$.

% \begin{remark}
%     As discussed in Section \ref{section: joining decisions based on incomplete information}, Theorems \ref{theorem:consistency} and \ref{theorem:asymptotic_normality} extend to settings in which the exact virtual waiting time is not announced. The only difference is that the objects $\bs{Z}_1$ and $q(\bs{Z}_1, \bs{\mu}_0)$ have a different meaning than in Section \ref{section: Constructing i.i.d regeneration cycles}. \hfill$\diamond$
% \end{remark}

\section{CONSTRUCTING i.i.d.\ REGENERATION CYCLES}\label{section: Constructing i.i.d regeneration cycles}

% Explanation of why we are interested in the construction of i.i.d.\ cycles.

In Section \ref{section: Parametric estimation procedure}, we defined the estimator $\hat{\bs\mu}_n\equiv (\hat{\bs{\alpha}}_n,\hat{\bs{\theta}}_n)$, and stated its consistency and asymptotic normality properties. 
% The goal of the next section is to prove this claim.\footnote{SB: In this section, we only create a setup for proving} 
% A complication is that terms appearing in the log-likelihood in Equation \eqref{eqn:log_likelihood:exact} have an intricate underlying dependence structure. 
% We deal with this by decomposing the process' sample path into (inherently i.i.d.) regeneration cycles.
% Such cycles are convenient to work with, and play a key role in the proofs of Theorems \ref{theorem:consistency}--\ref{theorem:asymptotic_normality}.
In the proofs of these statements, a crucial role is played by `regeneration cycles' into which our queueing process can be decomposed.
For regeneration cycle $j$, where $j\in {\mathbb N}$, we store all relevant observable information (effective arrival times, waiting times, jump sizes) pertaining to only that regeneration cycle into a random vector ${\bs Z}_j$.
The log-likelihood is then written as a sum of log-likelihoods of i.i.d.\ random variables $q(\bs{Z}_j, {\bs \mu})$, which are functions of the observation ${\bs Z}_j$ and the parameter ${\bs \mu}$.
In doing so, we create a set of i.i.d.\ objects, which we use in Section \ref{section: strong consistency} in order to prove Theorems \ref{theorem:consistency}--\ref{theorem:asymptotic_normality}.

% We begin this section by defining the process $\{L(t): t \geq 0\}$, which stores the number of customers in the system (waiting + in service) at each time point. For an M$_t$/G/s+H system which is empty at time 0, with arrival rate $\lambda_{\bs{\alpha}}(t)$ which is periodic, the regeneration time $\tau$ is the first time when the system is again empty, and the arrival rate at time $\tau$ is equal to the arrival rate at time $0$.  
% More formally, given our assumption that $\lambda_{\bs\alpha}(t)$ has period 1, 
% \begin{align*}
%     \tau = \inf \big\{t > 0: t \in \mathbb{Z}_{+}, \ L(t) = 0 \big\}
% \end{align*}
% It is only at time $\tau$ that the system stochastically resets.
We proceed by detailing the way how we decompose the sample path of our M$_t$/G/$s$+H system into regeneration cycles.
Recalling that $\lambda(\cdot)$ has period 1, we define $\zeta_j$ as the $j$-th integer-valued point in time that the system is empty. Formally, with $L(t)$ denoting the number of customers in the system at time $t$, and supposing $L(0)=0$, we define the start of the $j$-th regeneration cycle recursively via
\begin{align*}
    \zeta_j &:= \inf \big\{t > \zeta_{j-1}: t \in \mathbb{N}, \ L(t) = 0 \big\}.
\end{align*}
It is clear that the trajectories of the queueing process within distinct regeneration cycles are independent and identically distributed. (Clearly, subsequent time epochs $t\in{\mathbb R}$ such that $L(t)=0$ do not induce regeneration cycles, due to the variable arrival rate: one should have that both the system is empty and the arrival rate resets.)
We define the duration of the $j$-th cycle by $R_j = \zeta_j - \zeta_{j-1}$.
Let $C_j$ represent the number of customers that are serviced during cycle $j$ and let $\eta_{j-1}+1, \eta_{j-1}+2, \cdots, \eta_{j-1}+C_j$ denote their indices, with $\eta_0 = 0$ and $\eta_j - \eta_{j-1} = C_j$ (so that $\eta_j$ is the cumulative total number of customers that have arrived by the end of the $j$-th regeneration cycle). 
Finally, denote by $N_r$ the number of regeneration cycles in the observed sample path.
% With this in mind, we now introduce several notations corresponding to an observed sample path of the system. 
% Let $N_r$ denote the number of regeneration cycles that occur in the sample path of the observed system with $n$ non-balking arrivals. 

We now express the data we will be working with in terms of the regeneration structure we just defined. 
Let $\{\tilde{A}_{j,i}\}_{i=1,2,\cdots,C_j}$ represent the arrival times of customers arriving in the $j$-th regeneration cycle, relative to $\zeta_{j-1}$ (i.e., the starting point of the regeneration cycle), and let $\{A_{j,i} \}_{i=1,2,\cdots,C_j}$ denote the corresponding interarrival times. 
Define $\{W_{j,i}\}_{i=1,2,\cdots,C_j}$ and $\{X_{j,i}\}_{i=1,2,\cdots,C_j}$ as the waiting times and upward jumps in the virtual waiting time, respectively. 
More formally, for any $j \in \{1,2,\ldots,N_r\}$, and $i = \eta_{j-1}+1, \eta_{j-1}+2, \cdots, \eta_{j-1}+C_j$, we have
\begin{align}
    \tilde{A}_i &= \zeta_{j-1} + \tilde{A}_{j, i-\eta_{j-1}},\:\:\:\:\:\:
    A_i = A_{j,i-\eta_{j-1}},\:\:\:\:\:\:
    W_i= W_{j,i-\eta_{j-1}},\:\:\:\:\:\:
    X_i = X_{j,i-\eta_{j-1}}.\label{eq:conversion}
\end{align}
In essence, we make this transformation because we wish to specify the position of any customer relative to the start time of the regeneration cycle that they belong to. 

Figure \ref{fig:tikz:demo regeneration cycle} shows a hypothetical sample path of the number of customers as a function of time in our M$_t$/G/$s$+H system. 
We observe one full regeneration cycle, starting at time $0$ and ending at time $3$ (when the next regeneration cycle begins). 
Notice that the system becomes empty somewhere between time 2 and 3, but time $3$ is the first instant when the system is empty \textit{and} the arrival rate resets. 
Note that any regeneration cycle can consist of multiple busy periods of the underlying queueing process.

Theorem \ref{theorem:properties_of_system}, stated below, is crucial to the proofs of consistency and asymptotic normality of the estimators $\hat{\bs{\mu}}_n$.

\begin{figure}
    \centering
    \begin{tikzpicture}
        \draw[->] (0,0) -- (16,0) node[right] {$t$};
        \draw[->] (0,0) -- (0,5) node[above] {$L(t)$};
    
        % Labelling integral time points
        \filldraw (4.1,0) circle[radius=1.5pt, color=red];
        \node[draw=none,below,color=red] at (4.1,0) {\smaller{1}};
    
        \filldraw (8.2,0) circle[radius=1.5pt, color=red];
        \node[draw=none,below,color=red] at (8.2,0) {\smaller{2}};
    
        \filldraw (12.3,0) circle[radius=1.5pt, color=red];
        \node[draw=none,below,color=red] at (12.2,0) {\smaller{3}};
    
        % Labelling y-axis
        \filldraw (0,1) circle[radius=1.5pt, color=blue];
        \node[draw=none,left] at (0,1) {1};
    
        \filldraw (0,2) circle[radius=1.5pt, color=blue];
        \node[draw=none,left] at (0,2) {2};
    
        \filldraw (0,3) circle[radius=1.5pt, color=blue];
        \node[draw=none,left] at (0,3) {3};
    
        \filldraw (0,4) circle[radius=1.5pt, color=blue];
        \node[draw=none,left,] at (0,4) {4};
        
        % Labelling arrival time points
        \filldraw (0.5,0) circle[radius=1.5pt, color=blue];
        \node[draw=none,below,color=blue] at (0.5,0) {\small{$\tilde{A}_{1,1}$}};
    
        \filldraw (1.5,0) circle[radius=1.5pt, color=blue];
        \node[draw=none,below,color=blue] at (1.5,0) {\small{$\tilde{A}_{1,2}$}};
    
        \filldraw (2.5,0) circle[radius=1.5pt, color=blue];
        \node[draw=none,below,color=blue] at (2.5,0) {\small{$\tilde{A}_{1,3}$}};
        \node[draw=none,below,color=blue] at (3.5,0) {\small{$\cdots$}};
    
        \filldraw (11,0) circle[radius=1.5pt, color=blue];
        \node[draw=none, below,color=blue] at (11,0) {\small{$\tilde{A}_{1,C_1}$}};
    
        \filldraw (13,0) circle[radius=1.5pt, color=blue];
        \node[draw=none, below,color=blue] at (13,0) {\small{$\tilde{A}_{2,1}$}};
    
        \filldraw (14,0) circle[radius=1.5pt, color=blue];
        \node[draw=none, below,color=blue] at (13.8,0) {\small{$\tilde{A}_{2,2}$}};
    
        \filldraw (14.5,0) circle[radius=1.5pt, color=blue];
        \node[draw=none, below,color=blue] at (14.6,0) {\small{$\tilde{A}_{2,3}$}};
        
        % Graph plot
        \draw[line width=0.5mm, black, dashed]{} (0.5, 0) -- (0.5, 1);
        \draw[line width=0.5mm, black]{} (0.5, 1) -- (1.5, 1);
        \draw[line width=0.5mm, black, dashed]{} (1.5, 1) -- (1.5, 2);
        \draw[line width=0.5mm, black]{} (1.5, 2) -- (2.5, 2);
        \draw[line width=0.5mm, black, dashed]{} (2.5, 2) -- (2.5, 3);
        \draw[line width=0.5mm, black]{} (2.5, 3) -- (3.5, 3);
        \draw[line width=0.5mm, black, dashed]{} (3.5, 3) -- (3.5, 4);
        \draw[line width=0.5mm, black]{} (3.5,4) -- (4.8, 4);
        \draw[line width=0.5mm, black, dashed]{} (4.8, 4) -- (4.8, 3);
        \draw[line width=0.5mm, black]{} (4.8,3) -- (5,3);
        \draw[line width=0.5mm, black, dashed]{} (5,3) -- (5, 4);
        \draw[line width=0.5mm, black]{} (5,4) -- (6.5,4);
        \draw[line width=0.5mm, black, dashed]{} (6.5,4) -- (6.5, 3);
        \draw[line width=0.5mm, black]{} (6.5,3) -- (7.1,3);
        \draw[line width=0.5mm, black, dashed]{} (7.1,3) -- (7.1, 2);
        \draw[line width=0.5mm, black]{} (7.1,2) -- (7.5,2);
        \draw[line width=0.5mm, black, dashed]{} (7.5,2) -- (7.5, 1);
        \draw[line width=0.5mm, black]{} (7.5,1) -- (8.5,1);
        \draw[line width=0.5mm, black, dashed]{} (8.5,1) -- (8.5, 2);
        \draw[line width=0.5mm, black]{} (8.5,2) -- (9,2);
        \draw[line width=0.5mm, black, dashed]{} (9,2) -- (9, 1);
        \draw[line width=0.5mm, black]{} (9,1) -- (9.6,1);
        \draw[line width=0.5mm, black, dashed]{} (9.6, 1) -- (9.6, 0);
    
        \draw[line width=0.5mm, black, dashed]{} (10.5, 0) -- (10.5, 1);
        \draw[line width=0.5mm, black]{} (10.5,1) -- (11,1);    
        \draw[line width=0.5mm, black, dashed]{} (11, 1) -- (11, 2);
        \draw[line width=0.5mm, black]{} (11,2) -- (11.5,2);
        \draw[line width=0.5mm, black, dashed]{} (11.5, 2) -- (11.5, 1);
        \draw[line width=0.5mm, black]{} (11.5,1) -- (12,1);
        \draw[line width=0.5mm, black, dashed]{} (12,1) -- (12, 0);
        
        \draw[line width=0.5mm, black, dashed]{} (13,0) -- (13,1);
        \draw[line width=0.5mm, black]{} (13,1) -- (14,1);
        \draw[line width=0.5mm, black, dashed]{} (14,1) -- (14,2);
        \draw[line width=0.5mm, black]{} (14,2) -- (14.5,2);
        \draw[line width=0.5mm, black, dashed]{} (14.5,2) -- (14.5,3);
        \draw[line width=0.5mm, black]{} (14.5,3) -- (15,3);
        \draw[line width=0.5mm, black, dotted]{} (15,3) -- (15.5,3);
    \end{tikzpicture}
    \caption{Demonstration of regeneration cycle, in which $R_1=3.$}
    \label{fig:tikz:demo regeneration cycle}
\end{figure}
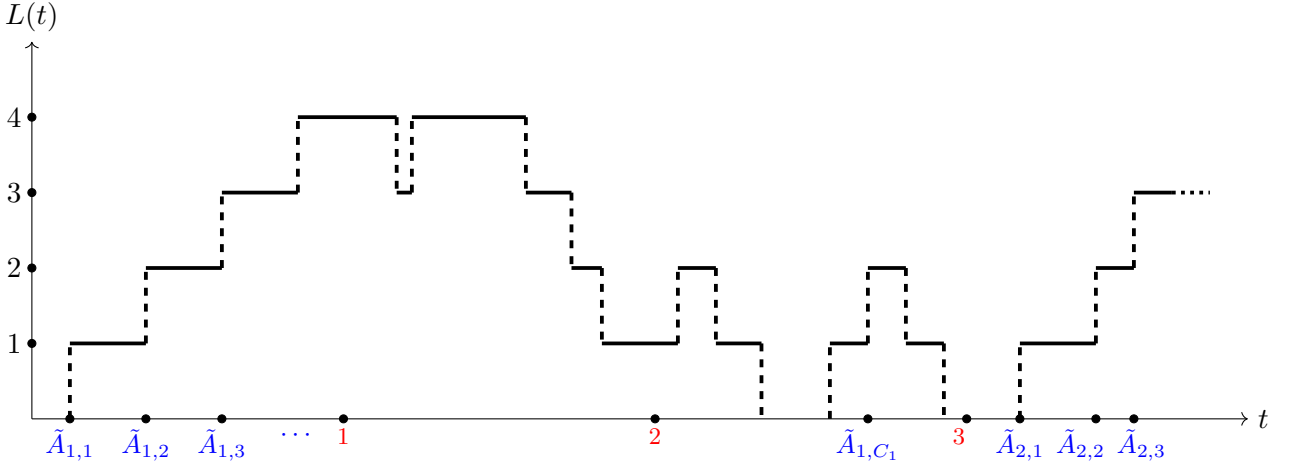

\begin{theorem} \label{theorem:properties_of_system}
    For the {\rm M$_t$/G/$s$+H} system, under Assumptions {\em (A2)} and {\em (A4)}, the following properties hold:
    \begin{enumerate}
        \item[(a)] The mean length of a regeneration cycle is finite, i.e., $\exptn R_1 < \infty$.
        \item[(b)] The expected number of non-balking customers in a regeneration cycle is finite, i.e., $\exptn C_1 < \infty$.
        \item[(c)] The expected sum of waiting times of non-balking customers in a regeneration cycle is finite, i.e.
        \begin{align*}
            \exptn\Bigg[\sum_{i=1}^{C_1} W_{1,i} \Bigg] < \infty.
        \end{align*}
    \end{enumerate}
\end{theorem}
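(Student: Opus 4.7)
The plan is to exploit the regenerative structure of the $\text{M}_t$/G/$s$+H system by first establishing stability, from which (a) follows, and then deriving (b) and (c) from (a) via Poisson-process bounds on the arrivals during a cycle. Parts (a) and (b) should be relatively direct; part (c) will require a genuine strengthening to a second-moment estimate on $R_1$ and is the main obstacle.

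For (a), I would couple the $\text{M}_t$/G/$s$+H system pathwise with a dominating standard M/G/$s$ queue (no balking, hence no time-inhomogeneity to worry about) with constant arrival rate $\bar\lambda := \sup_{t\geq 0} \lambda_{\bs{\alpha}_0}(t)$, which is finite under the regularity Assumption (A2) combined with periodicity. Under the stability condition that I expect to be encoded in Assumption (A4) (presumably $\bar\lambda\,\exptn B < s$ or an averaged version thereof), the dominating queue is positive recurrent and has finite-mean busy periods. Since its workload pathwise dominates ours, our system is empty whenever the dominating one is. To convert an emptying epoch into regeneration at an \emph{integer} emptying time, I would invoke the strong Markov property: with probability at least $\exp(-\bar\lambda)>0$ no arrival occurs during the subsequent unit interval, so the system remains empty through the next integer. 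The number of emptying-epoch ``trials'' until this succeeds is geometrically dominated, each trial contributes a finite expected waiting time (again by stability), and combining via Wald's identity yields $\exptn R_1 < \infty$. This mirrors the regenerative framework of Heyman (1984) cited earlier.

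Part (b) then follows almost immediately: conditionally on $R_1$, the total number $A_1$ of Poisson arrivals (balking plus non-balking) during the cycle has law $\text{Poisson}\big(\int_0^{R_1}\lambda_{\bs{\alpha}_0}(u)\,\mathrm{d}u\big)$, so $\exptn[A_1\mid R_1]\leq \bar\lambda R_1$, whence $\exptn C_1 \leq \exptn A_1 \leq \bar\lambda\,\exptn R_1 < \infty$. For (c), I would use the observation that any joining customer must complete service within the cycle, so $W_{1,i}\leq R_1$ for every $i\leq C_1$, yielding the crude bound
\begin{align*}
\sum_{i=1}^{C_1} W_{1,i} \;\leq\; C_1 R_1 \;\leq\; A_1 R_1,
\end{align*}
and hence $\exptn\!\big[\sum_{i=1}^{C_1} W_{1,i}\big] \leq \bar\lambda\,\exptn R_1^2$ by the same conditional Poisson bound.

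The main obstacle is therefore to prove $\exptn R_1^2 < \infty$, a strictly stronger statement than (a). I would sharpen the coupling argument: a stable M/G/$s$ queue has busy periods with finite second moment provided the service-time distribution $G$ admits a finite second moment, which is the natural extra moment condition I would expect in the Assumption (A2)/(A4) package. Since the residual time between an emptying epoch and the next integer instant is bounded by $1$, composing the busy-period second moment with the geometric number of trials (whose second moment is finite since its mean is) delivers $\exptn R_1^2 < \infty$. A cleaner alternative, avoiding explicit coupling, would be a Foster--Lyapunov argument on a quadratic test function of the workload $V(t)$, combined with the recurrence results of Bena\"im et al.\ (2022) cited in the literature review; this fits naturally with the time-inhomogeneous but periodic driving rate, at the cost of more delicate drift computations.
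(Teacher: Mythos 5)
Your argument for part (a) rests on a guess about Assumption (A4) that does not match the paper. The actual condition is $\lambda_{\max}\,\exptn(B)\,(1-H_{\bs{\theta}}(\infty))<1$, i.e., only the work brought by \emph{infinitely patient} customers is constrained; in the typical case $H_{\bs{\theta}}(\infty)=1$ this places no restriction whatsoever on $\lambda_{\max}\exptn B$ relative to $s$. Consequently your dominating no-balking M/G/$s$ queue with rate $\bar\lambda=\lambda_{\max}$ can be (and in the paper's own Example 1.1, with $\lambda_{\max}=70$, $\exptn B=1$, $s=1$, emphatically is) unstable, so the coupling gives nothing: stability of the M$_t$/G/$s$+H system is created by the balking itself, not inherited from raw capacity. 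The paper instead runs a Foster--Lyapunov drift argument on the vector of residual workloads sampled at integer times, with test function $1+\delta^{-1}\max_i x_i$; the drift is negative precisely because, once every server's workload exceeds a level $p$, only customers with patience above $p-1$ join, and (A4) guarantees their offered load $\lambda_{\max}\exptn(B)\tilde H_{\bs{\theta}}(p-1)$ is below $1$ for $p$ large. Recurrence of the empty state at integer times then follows from the cited results of Bena\"im et al. Any correct proof has to exploit this state-dependent thinning; a state-independent majorant cannot.

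This gap propagates into part (c). Your bound $\sum_i W_{1,i}\le C_1R_1$ forces you to prove $\exptn R_1^2<\infty$, which you propose to extract from second moments of busy periods of the (nonexistent) stable dominating queue, and which would in any case require $\exptn B^2<\infty$ --- an assumption the theorem does not make (only (A2) and (A4) are in force, giving just $\exptn B<\infty$). The paper avoids second moments entirely: it embeds the workload vector at effective arrivals and integer epochs, applies the renewal--reward theorem to the reward $f(\bs{Y}_j)=\min_i X_i(t_j)$ (which dominates the waiting times), and deduces finiteness of the expected cycle sum from the a.s.\ finiteness of the long-run average together with $\exptn\tau<\infty$. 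Separately, even granting $\exptn R_1^2<\infty$, your step $\exptn[A_1R_1]\le\bar\lambda\,\exptn R_1^2$ is not immediate, since conditioning on the cycle length biases the arrival count; it would need a Cauchy--Schwarz or second-moment Wald argument. Your part (b), by contrast, is a clean and valid shortcut (Wald for the driving Poisson process over the stopping time $R_1$), simpler than the paper's work-conservation identity $sR_1\ge\sum_i B_{1,i}$ --- but it, too, is conditional on first repairing part (a).
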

\begin{proof}
    Deferred to Appendix \ref{section: analysis of Mt/G/s+H system}.
\end{proof}

In the rest of this section we present a decomposition of the log-likelihood $\ell_n^{\text{exact}}$ in terms of $N_r$ contributions $\big\{\ell^\circ_{j}\big\}_{j=1}^{N_r}$ by the individual regeneration cycles $1,2,\cdots,N_r$, in such a way that they are i.i.d., and that for each $j \in \{1,2,\cdots, N_r\}$, $\ell^{\circ}_{j}$ is a function only of the information pertaining to regeneration cycle $j$.
In this exposition we introduce a number of objects that feature in the assumptions we impose in the next section, under which Theorems~\ref{theorem:consistency}  and~\ref{theorem:asymptotic_normality} hold. 
The most intuitive first step towards obtaining $\ell^{\circ}_{j}$ would be to select the following portion of the summation from \eqref{eqn:log_likelihood:exact}:
\begin{align}\label{eqn:reg cycles:intuitive expression}
    \sum_{i= \eta_{j-1}+1}^{\eta_j} \bigg(\underbrace{\log\lambda_{\bs{\alpha}}\big(\tilde{A}_i\big)}_{\text{(I)}} + \underbrace{\log\tilde{H}_{\bs{\theta}}\big(W_{i-1}+X_{i-1}-A_i \big)}_{\text{(II)}} -\underbrace{\int_{0}^{A_i} \lambda_{\bs{\alpha}}\big(u+\tilde{A}_{i-1}\big)\tilde{H}_{\bs{\theta}}(W_{i-1}+X_{i-1}-u)}_{\text{(III)}} \ \mathrm{d}u \bigg),
\end{align}
i.e., the portion of the log-likelihood from \eqref{eqn:log_likelihood:exact} corresponding to the contribution by customers $\eta_{j-1}+1, \ldots, \eta_j=\eta_{j-1}+C_j$, all of which belong to cycle $j$. However, we will soon discover that in the above summation, we have included some portion which depends on cycle $j-1$, call it $T^{\text{exc}}_j$ (which we must exclude), and excluded some portion which depends on cycle $j$, call it $T^{\text{inc}}_j$ (which we must include). In order to understand what $T^{\text{exc}}_{j}$ and $T^{\text{inc}}_{j}$ are, we must express (I), (II), and (III) in terms of cycle $j$ related quantities, through the aid of \eqref{eq:conversion}.

% In \eqref{eqn:log_likelihood:exact}, we collect the terms corresponding to cycle $j\in\{1,\ldots,N_r\}$, and we call their sum $\ell^\circ_{j}$. 
% This quantity corresponds to the contributions by customers $\eta_{j-1}+1, \ldots, \eta_j=\eta_{j-1}+C_j$, all of which belong to cycle $j$, i.e., $\ell^\circ_{j}$ equals
% \begin{align*}
%     \sum_{i= \eta_{j-1}+1}^{\eta_j} \bigg(\underbrace{\log\lambda_{\bs{\alpha}}\big(\tilde{A}_i\big)}_{\text{(I)}} + \underbrace{\log\tilde{H}_{\bs{\theta}}\big(W_{i-1}+X_{i-1}-A_i \big)}_{\text{(II)}} -\underbrace{\int_{0}^{A_i} \lambda_{\bs{\alpha}}\big(u+\tilde{A}_{i-1}\big)\tilde{H}_{\bs{\theta}}(W_{i-1}+X_{i-1}-u)}_{\text{(III)}} \ \mathrm{d}u \bigg),
% \end{align*}
% so that $\ell_n^{\text{exact}} = \sum_{j=1}^{N_r} \ell^\circ_{j}$. 

(\textbf{I}): For $i = \eta_{j-1}+1, \eta_{j-1}+2, \cdots, \eta_j$, due to \eqref{eq:conversion}, we have, due to the periodic nature of $\lambda(\cdot)$ and bearing in mind that $\zeta_{j-1}$ is integer, 
\begin{align}
    \log\lambda_{\bs{\alpha}}\big(\tilde{A}_i\big) = \log\lambda_{\bs{\alpha}}\big(\zeta_{j-1} + \tilde{A}_{j, i-\eta_{j-1}}\big) = \log\lambda_{\bs{\alpha}}\big(\tilde{A}_{j, i-\eta_{j-1}}\big).
\end{align}
We conclude that all terms in (I) are dependent only on cycle $j$ variables, and must therefore be included in $\ell^{\circ}_j$.

\vspace{1mm}

(\textbf{II}):  For $i = \eta_{j-1}+2, \eta_{j-1}+3, \cdots, \eta_j$, due to \eqref{eq:conversion}, we obtain
\begin{align}\label{eq: reg cycle: term 2}
    \log\tilde{H}_{\bs{\theta}}\big(W_{i-1}+X_{i-1}-A_i \big) = \log\tilde{H}_{\bs{\theta}}\big(W_{j,i-1-\eta_{j-1}} + X_{j, i-1-\eta_{j-1}} - A_{j,i-\eta_{j-1}} \big).
\end{align}
In the case of $i = \eta_{j-1}+1$, on an apparent level, it feels as if the LHS of \eqref{eq: reg cycle: term 2} depends on $W_{i-1}$ and $X_{i-1}$, which are the waiting time and the upward jump in virtual waiting time caused by the last customer of the previous cycle, i.e. cycle $j-1$. 
Upon closer inspection however, we realize that $W_{\eta_{j-1}+1}$ is the waiting time of the first customer in the $j$-th regeneration cycle (who by definition finds the system empty). 
Therefore, by the Lindley recursion, $W_{\eta_{j-1}}+X_{\eta_{j-1}}-A_{\eta_{j-1}+1} \leq 0$ which implies $\tilde{H}_{\bs{\theta}}\big(W_{\eta_{j-1}}+X_{\eta_{j-1}}-A_{\eta_{j-1}+1}\big) = 1$ and hence 
\begin{align}\log\tilde{H}_{\bs{\theta}}\big(W_{\eta_{j-1}}+X_{\eta_{j-1}}-A_{\eta_{j-1}+1}\big) = 0.\end{align}
Again we conclude that all terms in (II) are dependent only on cycle $j$ variables, and must therefore be included in $\ell^{\circ}_j$.

\vspace{1mm}

(\textbf{III}): For $i = \eta_{j-1}+2, \eta_{j-1}+3, \cdots, \eta_j$, again by \eqref{eq:conversion} in combination with the periodicity of $\lambda_{\bs\alpha}(\cdot)$,
\begin{align}\label{eq:reg cycle:term 3}
    \int_{0}^{A_i} \lambda_{\bs{\alpha}}\big(u+\tilde{A}_{i-1}\big)&\tilde{H}_{\bs{\theta}}\big(W_{i-1}+X_{i-1}-u\big) \, \mathrm{d}u \notag
    \\
    &= \int_{0}^{A_{j,i-\eta_{j-1}}} \lambda_{\bs{\alpha}}\big(u+\tilde{A}_{j,i-1-\eta_{j-1}}\big)\tilde{H}_{\bs{\theta}}\big(W_{j,i-1-\eta_{j-1}}+X_{j,i-1-\eta_{j-1}}-u\big) \, \mathrm{d}u.
\end{align}
Once again we can conclude that all terms in (III) for customer indices $i = \eta_{j-1}+2, \cdots, \eta_j$ are dependent only on cycle $j$ variables, and must therefore be included in $\ell^{\circ}_j$. 
For $i = \eta_{j-1}+1$, observe that $A_{\eta_{j-1}+1}$ represents the time elapsed between the arrival of the last customer of cycle $j-1$ and the first customer of cycle $j$, and therefore it is not straightforward to replicate \eqref{eq:reg cycle:term 3} for $i = \eta_{j-1}+1$.
Since the integral on the LHS of \eqref{eq:reg cycle:term 3} also depends on some portion of cycle $j-1$, we split it into into two sub-integrals:
\begin{align}\notag
    \int_{0}^{\zeta_{j-1}-\tilde{A}_{\eta_{j-1}}} \lambda_{\alpha}\big(u+\tilde{A}_{i-1}\big)&\tilde{H}_{\bs{\theta}}\big(W_{i-1}+X_{i-1}-u\big) \, \mathrm{d}u \:+\\& \underbrace{\int_{\zeta_{j-1}-\tilde{A}_{\eta_{j-1}}}^{A_{\eta_{j-1}+1}} \lambda_{\alpha}\big(u+\tilde{A}_{i-1}\big)\tilde{H}_{\bs{\theta}}\big(W_{i-1}+X_{i-1}-u\big) \, \mathrm{d}u}_{\text{(IV)}}.\label{eq:reg cycle:term 3 split}
\end{align}
where the first sub-integral depends on regeneration cycle $j-1$, and must therefore be excluded by us. That is,
\begin{align*}
    T^{\text{exc}}_j = \int_{0}^{\zeta_{j-1}-\tilde{A}_{\eta_{j-1}}} \lambda_{\alpha}\big(u+\tilde{A}_{i-1}\big)\tilde{H}_{\bs{\theta}}\big(W_{i-1}+X_{i-1}-u\big) \, \mathrm{d}u.
\end{align*}
What this means is that we have a term $T^{\text{exc}}_j$ in \eqref{eqn:reg cycles:intuitive expression}, which is dependent on cycle $j-1$. But this also means that there is a similar term $T^{\text{exc}}_{j+1}$ which is dependent on cycle $j$, in the expression similar to \eqref{eqn:reg cycles:intuitive expression} but then corresponding to cycle $j+1$. We write that term down based on the first sub-integral from \eqref{eq:reg cycle:term 3 split} as follows:
%. Moreover, this term is equal to $T^{\text{inc}}_j$, i.e.,
\begin{align*}
    T^{\text{inc}}_j = \int_{0}^{\zeta_{j}-\tilde{A}_{\eta_{j}}} \lambda_{\alpha}\big(u+\tilde{A}_{\eta_j}\big)\tilde{H}_{\bs{\theta}}\big(W_{\eta_j}+X_{\eta_j}-u\big) \, \mathrm{d}u.
\end{align*}
Noting that $\zeta_{j}-\tilde{A}_{\eta_{j}} = (\zeta_j - \zeta_{j-1}) - (\tilde{A}_{\eta_{j}} - \zeta_{j-1} ) = R_j - \tilde{A}_{j, C_j}$, and using \eqref{eq:conversion}, this expression can be written as
\begin{align}
    T^{\text{inc}}_j = \int_{0}^{\zeta_j - \tilde{A}_{j,C_j}} \lambda_{\alpha}\big(u+\tilde{A}_{j,C_j}\big)\tilde{H}_{\bs{\theta}}\big(W_{j,C_j}+X_{j,C_j}-u\big) \, \mathrm{d}u.
\end{align}
% \[F(a,b):= \int_{a}^{b} \lambda_{\bs{\alpha}}\big(u+\tilde{A}_{\eta_{j-1}}\big)\tilde{H}_{\bs{\theta}}\big(W_{\eta_{j-1}}+X_{\eta_{j-1}}-u\big) \, \mathrm{d}u,\]
(IV) depends on cycle $j$ only, as shown below. 
By \eqref{eq:conversion}, we have $A_{\eta_{j-1}+1} = \zeta_{j-1}-\tilde{A}_{\eta_{j-1}} + A_{j,1}$, so that (IV) reads as
\begin{align} \label{integral_2}
    \int_{\zeta_{j-1}-\tilde{A}_{\eta_{j-1}}}^{\zeta_{j-1}-\tilde{A}_{\eta_{j-1}} + A_{j,1}} \lambda_{\alpha}\big(u+\tilde{A}_{i-1}\big)\tilde{H}_{\bs{\theta}}\big(W_{i-1}+X_{i-1}-u\big) \, \mathrm{d}u. 
\end{align}
The first customer joining the system at least $\zeta_{j-1}-\tilde{A}_{\eta_{j-1}}$ time after $\tilde{A}_{\eta_{j-1}}$ finds the system empty, and thus has zero waiting time. 
Therefore, as a consequence of the Lindley recursion, $W_{\eta_{j-1}} + X_{\eta_{j-1}}-u \leq 0$ for any $u \geq \zeta_{j-1}-\tilde{A}_{\eta_{j-1}}$, so that $\tilde{H}_{\bs{\theta}}(W_{\eta_{j-1}}+X_{\eta_{j-1}}-u) = 1$. 
By performing the variable change $u \mapsto u - \big(\zeta_{j-1} -\tilde{A}_{\eta_{j-1}} \big)$, we conclude that \eqref{integral_2} has become
\begin{align}
    \int_{0}^{A_{j,1}} \lambda_{\bs{\alpha}}(u) \ \mathrm{d}u.
\end{align}

% we write the term under consideration as
% \begin{align}\label{integral_1}
%     F({0},{\zeta_{j-1}-\tilde{A}_{\eta_{j-1}}}) +F({\zeta_{j-1}-\tilde{A}_{\eta_{j-1}}},{A_{\eta_{j-1}+1}}). 
% \end{align}

We conclude this section by defining an object that plays a crucial role in the assumptions that we impose for our main results to hold.
%Let $\big(\mathcal{Z}, \mathcal{B}\big)$ be a measurable space. 
For every regeneration cycle $j \in \{1,2,\ldots,N_r\}$, define the vector
\begin{align}
    \bs{Z_j} = \Big(R_j, C_j, \big\{\tilde{A}_{j,i}\big\}_{i=1,2,\cdots,C_j}, \big\{W_{j,i}\big\}_{i=1,2,\cdots,C_j}, \big\{X_{j,i}\big\}_{i=1,2,\cdots,C_j} \Big)\label{data_vector}.
\end{align}
This means that $\bs{Z_j}$ is the collection of all data corresponding to the $j$-th regeneration cycle: cycle length, number of customers that joined the system during this cycle, their arrival times relative to the start time of the cycle, the waiting times they experience, and the upward jumps in virtual waiting times that they effect.
For a given parameter vector $\bs{\mu} \in \Theta$ and
\[\bs{z} = \big\{r, c, \{\tilde{a}_i\}_{i=1,2,\cdots,c}, \{w_i\}_{i=1,2,\cdots,c},\{x_i\}_{i=1,2,\cdots,c} \big\}\in{\mathscr Z}:=\bigcup_{m=1}^{\infty}{\mathbb N}\times {\mathbb N}\times {\mathbb R}^m_+ \times {\mathbb R}^m_+ \times {\mathbb R}^m_+,\] we define
\begin{align}
     q\big(\bs{z},\bs{\mu}\big) = &\sum_{i=1}^{c} \log\lambda_{\bs{\alpha}}\big(\tilde{a}_i\big) + \sum_{i=2}^{c} \log\tilde{H}_{\bs{\theta}}\big(w_{i-1}+x_{i-1}-(\tilde{a}_i - \tilde{a}_{i-1}) \big) - \int_{0}^{\tilde{a}_1} \lambda_{\bs{\alpha}}(u) \ \mathrm{d}u \notag
    \\
    & -\sum_{i=2}^{c} \int_{0}^{\tilde{a}_i - \tilde{a}_{i-1}} \lambda_{\bs{\alpha}}\big(u+\tilde{a}_{i-1}\big)\tilde{H}_{\bs{\theta}}\big(w_{i-1}+x_{i-1}-u\big) \ \mathrm{d}u  \notag
    \\
    & -\int_{0}^{r - \tilde{a}_c} \lambda_{\bs{\alpha}}\big(u+\tilde{a}_c\big)\tilde{H}_{\bs{\theta}}\big(w_c+x_c-u\big) \ \mathrm{d}u. \label{q(z,mu)}
\end{align}
Upon combining all above computations for the components (I), (II) and (III), (IV) and $T^{\text{inc}}_j$ we thus conclude that $\ell_j^\circ =q(\bs{Z}_j, \bs\mu)$ and, with the objects $q(\bs{Z}_1, \bs\mu), \cdots, q(\bs{Z}_{N_r}, \bs\mu)$ being i.i.d.,
\begin{align*}
    \ell_n^{\text{exact}} = \sum_{j=1}^{N_r} \ell_j^\circ=\sum_{j=1}^{N_r} q(\bs{Z}_j, \bs\mu).
\end{align*}

\section{ASYMPTOTIC PERFORMANCE OF ESTIMATOR}\label{section: strong consistency}

In Section \ref{section: Constructing i.i.d regeneration cycles}, we decomposed the sample path into distinct regeneration cycles. 
This allowed us to create, in our M$_t$/G/$s$+H setting with its own specific dynamics, i.i.d.\ objects. 
In this section, we extensively use this regenerative structure to prove Theorems \ref{theorem:consistency} and \ref{theorem:asymptotic_normality}. 
In the first subsection, we present the assumptions imposed, whereas in the second subsection is is argued that these assumptions are mild and natural. 
The last two subsections provide the proofs of consistency and asymptotic normality, respectively.

\subsection{Assumptions}
In Section \ref{section: Parametric estimation procedure}, we explained the estimation procedure, and stated our results regarding its asymptotic performance. 
The theorems hold under a series of assumptions, which we formally state in this subsection. 
We define $H_{\bs{\theta}}(\infty):=\lim_{x \rightarrow \infty}  H_{\bs{\theta}}(x).$

\begin{assumption*}
    Let $d: \mathbb{R}^{k+p} \times \mathbb{R}^{k+p} \mapsto \mathbb{R}_{+}$ be the $L^1$-metric. The following assumptions are imposed:
    \begin{itemize}
        \item[{\rm (A1)}] The parameter space $\Theta \equiv \Theta_{\bs\alpha}\times\Theta_{\bs\theta}\subset \mathbb{R}^{k+p}$ is a convex and compact set, such that the true parameter $\bs{\mu}_0$ lies in the interior of the set, i.e., $\bs{\mu}_0 \in \Theta^{\circ}$. 
        \item[{\rm (A2)}] There exist $\lambda_{\min}, \lambda_{\max} \in( 0,\infty)$ such that $\lambda_{\min} \leq \lambda_{\bs{\alpha}}(t) \leq \lambda_{\max}$ for all $\bs{\alpha} \in \Theta_{\alpha}$ and $t \in [0,1)$. 
        \item[{\rm (A3)}] There exists $\kappa > 0$ such that for all $t \in [0,1)$, $\lambda_{{\bs \cdot}}(t): \Theta_{\bs\alpha} \mapsto \mathbb{R}$ is $\kappa$-Lipschitz, i.e.,
        \begin{align*}
            \sup_{t \in [0,1)} \, \Big\vert \lambda_{\bs{\alpha}}(t) - \lambda_{\bs{\alpha}'}(t) \Big\vert \leq \kappa \ d\big(\bs{\alpha}, \bs{\alpha}' \big).
        \end{align*} 
        \item[{\rm (A4)}] $\lambda_{\max}\, \exptn(B)\,(1 - H_{\bs{\theta}}(\infty)) < 1$.
        \item[{\rm (A5)}] There exists $G_1 > 0$ such that for all $x \ge 0$, $\tilde{H}_{\bs{\theta}}(x): \Theta_{\bs\theta} \mapsto \mathbb{R}$ is $G_1$-Lipschitz, i.e.,
        \begin{align*}
            \sup_{x\ge 0}\,\Big\vert \tilde{H}_{\bs{\theta}} (x) - \tilde{H}_{\bs{\theta}'}(x)\Big\vert \leq G_1 \ d\big(\bs{\theta}, \bs{\theta'}\big).
        \end{align*}
        \item[{\rm (A6)}] There exists a linear function $G_2(\cdot):\mathbb{R}^{+} \mapsto \mathbb{R}^{+}$ such that, for all $x \ge 0$,
        \begin{align*}
            \Big\vert\log\tilde{H}_{\bs{\theta}} (x) -\log\tilde{H}_{\bs{\theta}'}(x)\Big\vert \leq G_2(x) \ d\big(\bs{\theta}, \bs{\theta'}\big).
        \end{align*}
        \item[{\rm (A7)}] There exists a linear function $G_3(\cdot): \mathbb{R}^{+} \mapsto \mathbb{R}^{+}$ such that, for all $x \ge 0$,
        \begin{align*}
            \Big\vert \log\tilde{H}_{\bs{\theta}}(x) \Big\vert \leq G_3(x).
        \end{align*}
        \item[{\rm (A8)}] For all $i \in \{1,2,\ldots, k\}$, $\bs{\alpha} \in \Theta_{\bs\alpha}$, and $t \in [0,1)$, the partial derivative $\frac{\partial}{\partial \alpha_i}\lambda_{\bs{\alpha}}(t)$ exists and is continuous.
        \item[{\rm (A9)}]
        For all $x\ge 0$, the gradient vector $\nabla_{\bs{\theta}} H_{\bs{\theta}}(x)$ and Hessian matrix $\nabla^2_{\bs{\theta}} H_{\bs{\theta}}(x)$ exist and are continuous with respect to $\bs{\theta}$.
        \item[{\rm (A10)}] The matrix 
        $\exptn\big[\nabla^2 q\big({\bs{Z}_1}, {\bs{\mu}_0}\big)\big]$ is invertible. 
        % \item[{\rm (A11)}] The information function $\psi: \Omega \mapsto \mathbb{R}_{+}$ is bounded on compact sets.
    \end{itemize}
\end{assumption*}

\subsection{Discussion of Assumptions}\label{discass}
We provide some insights into the reason for imposing these assumptions and possible relaxations
\begin{itemize}
    \item[$\circ$] Convexity of the parameter space $\Theta$ is required because of the repeated application of the mean-value theorem in our proofs. 
    %In practice, we believe it can be relaxed. 
    The other elements of (A1) are natural, and found commonly in statistical literature.
    \item[$\circ$] Through (A2) we impose the natural assumption that during times when the service system is operational, the arrival rate has a positive lower bound and a finite upper bound. 
    %In practice, service systems may be nonoperational during certain hours, during which the arrival rate is $0$, which can be incorporated easily in the Likelihood.
    \item[$\circ$] Assumption (A3) essentially entails that upon slightly tweaking $\bs{\alpha}$, the variation in the arrival rate is small, which is crucial for the estimation of $\bs{\alpha}$. 
    Furthermore, this assumption also implies that $\log \lambda_{\bs\cdot}(t): \Theta_{\bs\alpha} \mapsto \mathbb{R}$ is $\kappa/\lambda_{\min}$-Lipschitz, because
    \begin{align*}
        \big\vert \log\lambda_{\bs{\alpha}}(t) - \log\lambda_{\bs{\alpha}'}(t) \big\vert &= \frac{\big\vert \log\lambda_{\bs{\alpha}}(t) - \log\lambda_{\bs{\alpha}'}(t) \big\vert}{\big\vert \lambda_{\bs{\alpha}}(t) - \lambda_{\bs{\alpha}'}(t)\big\vert} \times \big\vert \lambda_{\bs{\alpha}}(t) - \lambda_{\bs{\alpha}'}(t)\big\vert \leq \ \frac{\kappa}{\lambda_{\min}} d\big(\bs{\alpha}, \bs{\alpha}' \big).
        \end{align*}
    \item[$\circ$] We require (A4) to show the stability of the M$_t$/G/s+H queueing system. 
    Informally, the quantity $\lambda_{\max}\, \exptn(B)\,(1 - H_{\bs{\theta}}(\infty))$ can be interpreted as the amount of work brought to the system if customers would join regardless of the system's congestion level.
    This makes us believe that this assumption can be further relaxed to just requiring that 
    $\lambda_{\max} \exptn(B) (1 - H_{\bs{\theta}}(\infty)) < s,$ but this has turned out remarkably hard to prove.    
    From a practical standpoint, however, this is of limited relevance, because for virtually any relevant patience distribution one has that $H_{\bs\theta}(\infty) = 1$.
    \item[$\circ$] Assumption (A5) is of the same spirit as (A3), and is justified along the same lines. 
    \item[$\circ$] We require (A6) and (A7) while proving strong consistency of the maximum likelihood estimates. We have verified that these specific assumptions hold for standard distributions.
    \item[$\circ$] (A8) and (A9) are regularity assumptions that are common in statistical literature.
    \item[$\circ$] (A10) is also a standard assumption in the statistical literature and essentially requires that the gradient of the log-likelihood is not degenerate.
    % \item[$\circ$] (A11) is an assumption that is required when working with models with incomplete information available to the arriving customers. More 
\end{itemize}

\subsection{Strong Consistency}
In Section \ref{section: Constructing i.i.d regeneration cycles}, we decomposed the observed sample path of the system into distinct regeneration cycles. 
We know that these cycles are i.i.d., and in the remainder of this section, we shall intensively make use of that property to establish asymptotic consistency of the estimators. 
At the end of Section \ref{section: Constructing i.i.d regeneration cycles}, we constructed random vectors which encompass all information pertaining to a regeneration cycle, and wrote the log-likelihood as a sum of i.i.d.\ functions of these random vectors. 
\begin{lemma} \label{D_Andrews_lemma_1}
     Let $d$ be a metric on $\Theta$. Let $\bs{\mu}, \bs{\mu'} \in \Theta$ where $\bs{\mu} = \big(\bs{\alpha}, \bs{\theta} \big)$ and $\bs{\mu'} = \big(\bs{\alpha}', \bs{\theta'} \big)$. Then under Assumptions {\em(A2), (A3), (A5), (A6)}, there exists a measurable function $B(\cdot): \mathscr{Z} \mapsto \mathbb{R}$ and a non-random function $h(\cdot): \mathbb{R} \mapsto \mathbb{R}$ such that for all~$j,$ \[\big\vert \,q\big(\bs{Z_j}, \bs{\mu'}\big) - q\big(\bs{Z_j}, \bs{\mu}\big) \,\big\vert \leq B\big(\bs{Z_j}\big)\,h\big(d\big(\bs{\mu}, \bs{\mu'}\big)\big),\] where $\exptn B\big(\bs{Z_j}\big) < \infty$ and $\lim_{x \rightarrow 0} h(x) = 0$.
\end{lemma}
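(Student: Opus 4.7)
The plan is to bound $|q(\bs Z_j,\bs\mu') - q(\bs Z_j,\bs\mu)|$ by decomposing the difference according to the five groups of terms in the definition \eqref{q(z,mu)} of $q$, namely the two sums (log-$\lambda$ and log-$\tilde H$) and the three integrals. For each group I will obtain a bound of the form $K_\ell\cdot M_\ell(\bs Z_j)\cdot d(\bs\mu,\bs\mu')$, where $K_\ell$ is a deterministic constant coming from the Lipschitz constants $\kappa, G_1, G_2$, and $M_\ell(\bs Z_j)$ is an integrable functional of the cycle data. Since each bound is already linear in $d(\bs\mu,\bs\mu')$, the natural choice is $h(x)=x$ (so $h(x)\to 0$ as $x\to 0$ is immediate), and $B(\bs Z_j)$ will be a sum of the $K_\ell M_\ell(\bs Z_j)$.

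For the log-$\lambda$ sum, I would invoke the derived $(\kappa/\lambda_{\min})$-Lipschitz property of $\log\lambda_{\bs\cdot}(t)$ discussed after (A3) to get a bound of $(\kappa/\lambda_{\min})\,C_j\, d(\bs\alpha,\bs\alpha')$. For the log-$\tilde H$ sum, observe that when $w_{i-1}+x_{i-1}-a_i\le 0$, both $\log\tilde H_{\bs\theta}$ and $\log\tilde H_{\bs\theta'}$ vanish (since $\tilde H\equiv 1$ on $(-\infty,0]$), so only positive arguments matter; applying (A6) and noting that the positive part of $w_{i-1}+x_{i-1}-a_i$ is precisely $W_{j,i}$ (by the Lindley recursion), the bound becomes $\sum_{i=2}^{C_j} G_2(W_{j,i})\,d(\bs\theta,\bs\theta')$. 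Because $G_2$ is linear, this is controlled by a constant multiple of $C_j+\sum_{i=1}^{C_j} W_{j,i}$.

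For each of the three integrals, I would use a standard add-subtract split: write
\begin{equation*}
\lambda_{\bs\alpha'}\tilde H_{\bs\theta'}-\lambda_{\bs\alpha}\tilde H_{\bs\theta}=(\lambda_{\bs\alpha'}-\lambda_{\bs\alpha})\tilde H_{\bs\theta'}+\lambda_{\bs\alpha}(\tilde H_{\bs\theta'}-\tilde H_{\bs\theta}),
\end{equation*}
and then bound these pointwise using (A3), (A5), together with $\tilde H\le 1$ and $\lambda_{\bs\alpha}\le\lambda_{\max}$. Integrating, each single integral contributes its length times $[\kappa\,d(\bs\alpha,\bs\alpha')+\lambda_{\max}G_1\,d(\bs\theta,\bs\theta')]$. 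The lengths are $\tilde a_1$, $a_i=\tilde a_i-\tilde a_{i-1}$ summed over $i=2,\dots,c$, and $r-\tilde a_c$; these telescope to $r$, so together the three integrals are bounded by $(\kappa+\lambda_{\max}G_1)\,R_j\,d(\bs\mu,\bs\mu')$, using the $L^1$ structure of $d$.

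Collecting the contributions, I may take
\begin{equation*}
B(\bs Z_j)=K_1\,C_j+K_2\,R_j+K_3\sum_{i=1}^{C_j}W_{j,i},
\end{equation*}
for suitable deterministic constants $K_1,K_2,K_3$ depending only on $\kappa,\lambda_{\min},\lambda_{\max},G_1,G_2$. Integrability $\exptn B(\bs Z_1)<\infty$ is then a direct consequence of parts (a)--(c) of Theorem~\ref{theorem:properties_of_system}, which give finiteness of $\exptn R_1$, $\exptn C_1$, and $\exptn[\sum_{i=1}^{C_1}W_{1,i}]$. I expect the only genuine subtlety to be the log-$\tilde H$ sum: one must carefully justify discarding terms with non-positive argument and identify the surviving arguments with the waiting times $W_{j,i}$ (so that part (c) of Theorem~\ref{theorem:properties_of_system} can be applied). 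Everything else is mechanical triangle-inequality and Lipschitz bookkeeping.
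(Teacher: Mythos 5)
Your proposal is correct and follows essentially the same route as the paper's proof: the same five-way decomposition of $q(\bs{Z_j},\bs{\mu'})-q(\bs{Z_j},\bs{\mu})$, the same Lipschitz bounds from (A2), (A3), (A5), (A6) with the add--subtract split for the $\lambda\tilde{H}$ integrands, the choice $h(x)=x$, and integrability of $B(\bs{Z_j})$ via Theorem \ref{theorem:properties_of_system}(a)--(c) and the linearity of $G_2$. The one point you flag as a subtlety --- handling non-positive arguments in the $\log\tilde{H}$ terms and identifying the bound with $G_2(W_{j,i})$ via the Lindley recursion --- is exactly the step the paper uses (bounding $G_2(W_{j,i-1}+X_{j,i-1}-A_{j,i})$ by $G_2(W_{j,i})$), so there is no gap.
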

\begin{proof}
    Deferred to Appendix \ref{section: appendix}.
\end{proof}

% The proof of Lemma \ref{D_Andrews_lemma_1} is deferred to the Appendix
\begin{corollary}[Corollary of Lemma \ref{D_Andrews_lemma_1}] The following two equations hold:
    \begin{align}
    \sup_{N_r \in{\mathbb N}} \ \frac{1}{N_r} \sum_{j=1}^{N_r} \exptn B\big(\bs{Z_j}\big) &< \infty, \label{D_Andrews_condition_1}
    \\
    \frac{1}{N_r} \sum_{j=1}^{N_r} \Big(B\big(\bs{Z_j}\big) - \exptn B\big(\bs{Z_j}\big)\Big) &\overset{\text{a.s}}{\longrightarrow} 0\:\:\:\mbox{as}\:\:N_r\to\infty\,. \  \label{D_Andrews_condition_2}
    \end{align}
\end{corollary}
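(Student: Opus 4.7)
The plan is to observe that this corollary is essentially an immediate consequence of Lemma \ref{D_Andrews_lemma_1} combined with the i.i.d.\ structure of the regeneration cycles constructed in Section \ref{section: Constructing i.i.d regeneration cycles}. By construction, the random vectors $\{\bs{Z_j}\}_{j \geq 1}$ encoding the data of distinct regeneration cycles are i.i.d., hence so is the sequence $\{B(\bs{Z_j})\}_{j \geq 1}$, since $B$ is a measurable function of $\bs{Z_j}$ alone. In particular $\mathbb{E} B(\bs{Z_j}) = \mathbb{E} B(\bs{Z_1})$ for every $j \in \mathbb{N}$, and Lemma \ref{D_Andrews_lemma_1} guarantees that this common expectation is finite.

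For \eqref{D_Andrews_condition_1}, the identical distribution of the $\bs{Z_j}$ reduces the Cesàro average to a constant: for every $N_r \in \mathbb{N}$,
\[
\frac{1}{N_r} \sum_{j=1}^{N_r} \mathbb{E} B(\bs{Z_j}) = \mathbb{E} B(\bs{Z_1}) < \infty,
\]
so the supremum over $N_r$ is trivially finite. For \eqref{D_Andrews_condition_2}, the i.i.d.\ sequence $\{B(\bs{Z_j})\}$ has finite first moment, so Kolmogorov's strong law of large numbers yields
\[
\frac{1}{N_r} \sum_{j=1}^{N_r} B(\bs{Z_j}) \xrightarrow{\text{a.s.}} \mathbb{E} B(\bs{Z_1}) \quad \text{as } N_r \to \infty,
\]
which is equivalent to the claimed convergence of centered averages to $0$.

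There is no real obstacle here: the substantive work was already carried out in Lemma \ref{D_Andrews_lemma_1} (producing the dominating function $B$ with finite expectation) and in Section \ref{section: Constructing i.i.d regeneration cycles} (establishing that the regeneration cycles are i.i.d.). The corollary is stated separately only because both statements are invoked later in the uniform convergence arguments underlying the consistency proof (in the spirit of Andrews' generic uniform convergence results cited in the introduction), so it is convenient to have them packaged in this form.
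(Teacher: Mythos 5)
Your proposal is correct and follows essentially the same route as the paper: both arguments rest on the observation that the $B(\bs{Z_j})$ are i.i.d.\ with finite mean (by Lemma \ref{D_Andrews_lemma_1}), so that the Ces\`aro average in \eqref{D_Andrews_condition_1} is the constant $\exptn B(\bs{Z_1})$ and \eqref{D_Andrews_condition_2} is the strong law of large numbers. No changes are needed.
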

\begin{proof}
    The random variables $B\big(\bs{Z_j}\big)$ for $j = 1,2,\cdots, N_r$ are i.i.d., and by Lemma \ref{D_Andrews_lemma_1}, $\exptn B(Z_1) < \infty$, so that  (\ref{D_Andrews_condition_1}) follows trivially. For (\ref{D_Andrews_condition_2}), we rely on the strong law of large numbers.
\end{proof}

We now possess all the requisite machinery to prove Theorem \ref{theorem:consistency}.
\begin{proof} [Proof of Theorem \ref{theorem:consistency}] 
First observe that as $n \rightarrow \infty,$ we have that $n/N_r \overset{\text{a.s.}}{\longrightarrow} \exptn\,C_1$, which is finite due to part (b) of Theorem \ref{theorem:properties_of_system}, so that 
$N_r \rightarrow \infty$ is equivalent to $n \rightarrow \infty$.
Using  \cite[Theorem 3(b)]{andrews1992generic}, in combination with
Lemma \ref{D_Andrews_lemma_1} and Equations \eqref{D_Andrews_condition_1} and \eqref{D_Andrews_condition_2}, conclude that 
\begin{align}
    \sup_{\bs{\mu} \in \Theta} \ \Bigg|\frac{1}{N_r} \sum_{j=1}^{N_r} q\big(\bs{Z_j}, \bs{\mu}\big) - \exptn q\big({\bs{Z}_1}, \bs{\mu}\big) \Bigg| \overset{\text{a.s}}{\longrightarrow} 0, \label{U-SLLN}
\end{align}
which is equivalent to, as $n \rightarrow \infty$, 
\begin{align*}
    \sup_{\bs{\mu} \in \Theta} \ \Bigg\vert \frac{1}{n} \ell_n\big(\bs{\alpha}, \bs{\theta};{\bs A},{\bs W} \,\big\vert \,{\bs X}\big)\cdot \frac{n}{N_r} - \exptn q\big({\bs{Z}_1}, \bs{\mu}\big) \Bigg\vert \overset{\text{a.s.}}{\longrightarrow} 0.
\end{align*}
The remainder of the proof is similar to that of \cite[Theorem 1]{inoue2023estimating}.
Recalling that $n/N_r \overset{\text{a.s.}}{\longrightarrow} \exptn\, C_1$ as $n \rightarrow \infty,$ we have that
\begin{align*}
    \sup_{\bs{\mu}\in\Theta} \left\vert \frac{1}{n}\ell_n\big(\bs{\alpha}, \bs{\theta};{\bs A},{\bs W} \,\big\vert \,{\bs X}\big)-\ell(\bs{\mu}) \right\vert \overset{\text{a.s.}}{\longrightarrow} 0 ,
\end{align*}
%$\ell_n\big(\bs{\alpha}, \bs{\theta};{\bs A},{\bs W} \,\big\vert \,{\bs X}\big)/n$ converges, uniformly in $\bs{\mu}$, to 
where $\ell(\bs{\mu})=\exptn q\big({\bs{Z}_1}, \bs{\mu}\big)/\exptn\,C_1$ is a non-random function which is maximized at the true parameter ${\bs{\mu}_0}$.
The density of the effective interarrival times given in \eqref{eqn:density effective interarrival times}, and thereby the log-likelihood \eqref{eqn:log_likelihood:exact}, are uniquely determined by $\lambda_{\bs\alpha}(\cdot)$ and $H_{\bs\theta}(\cdot)$, which in turn are uniquely determined by $\bs\alpha$ and $\bs\theta$ respectively. 
In particular, this means that there exist neither $\bs{\alpha}_1, \bs{\alpha}_2 \in \Theta_{\alpha}$ such that $\lambda_{\bs{\alpha}_1}(\cdot) = \lambda_{\bs{\alpha}_2}(\cdot)$ almost everywhere, nor $\bs{\theta}_1, \bs{\theta}_2 \in \Theta_{\theta}$ such that $H_{\bs{\theta}_1}(\cdot) = H_{\bs{\theta}_2}(\cdot)$ almost everywhere. 
We therefore conclude that the model is identifiable in the Kullback-Leibler sense, i.e.,
\begin{align*}
    \ell(\bs\mu) - \ell(\bs{\mu}_0) < 0, \ \forall \bs{\mu} \neq \bs{\mu}_0.
\end{align*}
Hence we can conclude that, $\bs{\mu_n} \overset{\text{a.s.}}{\longrightarrow} {\bs{\mu}_0}$.
\end{proof}

\subsection{Asymptotic Normality}
In the previous subsection, we established strong consistency of the estimator $\hat{\bs{\mu}}_n$, showing it converges almost surely to the true parameter ${\bs{\mu}_0}$. 
The next logical step is to discuss the fluctuation of the estimator around ${\bs{\mu}_0}$. 
In this section we show, under the assumptions imposed, that the fluctuations $\big(\hat{\bs{\mu}}_n - {\bs{\mu}_0}\big)$ scaled by a factor $\sqrt{n}$ tend to zero-mean normally distributed random variable.
Further advantages of working with regeneration cycles will become evident in this section. 
Being able to write down the log-likelihood as a sum of i.i.d.\ functions of the data in each regeneration cycle, we will use existing knowledge for proving Theorem \ref{theorem:asymptotic_normality}.
For convenience, let us define 
\begin{align*}
    \mathcal{L}_{N_r}\big(\bs{\mu}\big) := \frac{1}{N_r} \ell_n\big(\bs{\alpha}, \bs{\theta};{\bs A},{\bs W} \,\big\vert \,{\bs X}\big)=\frac{1}{N_r} \sum_{j=1}^{N_r} q\big(\bs{Z_j},{\bs\mu}).
\end{align*}
Lemma \ref{lemma:E(q_dot)=0} is important with regards to proving Theorem \ref{theorem:asymptotic_normality}, as we shall see below.
\begin{lemma}
    Under Assumptions {\em (A2), (A3), (A6), (A7), (A8), and (A9)}, $\exptn \big[\nabla q\big({\bs{Z}_1}, {\bs{\mu}_0}\big)\big]= \bs{0}_{(k+p)\times 1}. $\label{lemma:E(q_dot)=0}
\end{lemma}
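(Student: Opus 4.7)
The strategy is to identify $q(\bs{Z}_1, \bs{\mu})$, as constructed in Section~\ref{section: Constructing i.i.d regeneration cycles}, with the point-process log-likelihood of the effective arrival counting process $N$ over the first regeneration cycle $[0,\zeta_1]$ driven by the state-dependent conditional intensity $\nu(s,\bs{\mu}):=\lambda_{\bs{\alpha}}(s)\,\tilde H_{\bs{\theta}}(\Delta(s))$. Unpacking \eqref{q(z,mu)}, the arrival-indexed summands combine to give $\int_0^{\zeta_1}\log\nu(s,\bs{\mu})\,dN(s)$ (the first-customer patience term is $\log\tilde H_{\bs{\theta}}(0)=0$ because the cycle starts with an empty system, as already exploited in Section~\ref{section: Constructing i.i.d regeneration cycles}), while the inter-arrival integrals together with the boundary term $T^{\mathrm{inc}}_1$ telescope into $\int_0^{\zeta_1}\nu(s,\bs{\mu})\,ds$, yielding
\[
q(\bs{Z}_1,\bs{\mu}) \;=\; \int_0^{\zeta_1}\log\nu(s,\bs{\mu})\,dN(s)\;-\;\int_0^{\zeta_1}\nu(s,\bs{\mu})\,ds.
\]

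Next I would differentiate under the integral in $\bs{\mu}$, which is justified by the smoothness in (A8), (A9) together with the uniform bounds in (A2)--(A3), (A5), and evaluate at the true parameter, obtaining
\[
\nabla q(\bs{Z}_1,\bs{\mu}_0)\;=\;\int_0^{\zeta_1}\frac{\nabla\nu(s,\bs{\mu}_0)}{\nu(s,\bs{\mu}_0)}\,dM(s),\qquad M(s):=N(s)-\int_0^{s}\nu(u,\bs{\mu}_0)\,du.
\]
Here $M$ is the compensated counting martingale of the effective arrival process with respect to the filtration generated by the full system state; this is the standard Poisson-thinning fact that also underlies \eqref{eqn:density effective interarrival times}. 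Consequently the score at $\bs{\mu}_0$ is a stochastic integral against the zero-mean martingale $M$, and is itself a zero-mean local martingale started at $0$.

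The conclusion $\exptn[\nabla q(\bs{Z}_1,\bs{\mu}_0)]=\bs{0}$ then follows by applying the optional stopping theorem at the stopping time $\zeta_1$. The ingredients are: $\exptn\zeta_1<\infty$ by Theorem~\ref{theorem:properties_of_system}(a); uniform boundedness of the $\bs{\alpha}$-component of the integrand by means of $\lambda_{\bs{\alpha}_0}\geq\lambda_{\min}>0$ in (A2) combined with $\nabla_{\bs{\alpha}}\lambda_{\bs{\alpha}_0}$ bounded via (A3) and (A8); and an integrable dominator for the $\bs{\theta}$-component $\nabla_{\bs{\theta}}\log\tilde H_{\bs{\theta}_0}(\Delta(s))$, obtained from the Lipschitz and linear-growth bounds (A5)--(A7), the smoothness (A9), and the cycle-moment bound of Theorem~\ref{theorem:properties_of_system}(c). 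The main technical obstacle is precisely this last step: since the $\bs{\theta}$-score need not be uniformly bounded, one must carefully dominate $\exptn\int_0^{\zeta_1}\big\vert\nabla_{\bs{\theta}}\log\tilde H_{\bs{\theta}_0}(\Delta(s))\big\vert\,\nu(s,\bs{\mu}_0)\,ds$ using these assumptions in conjunction with Theorem~\ref{theorem:properties_of_system}, with a standard localization-and-dominated-convergence argument closing any residual local-versus-true martingale gap.
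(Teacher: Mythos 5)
Your proof is correct in outline but takes a genuinely different route from the paper's. You recast $q(\bs{Z}_1,\bs{\mu})$ as the point-process log-likelihood $\int_0^{\zeta_1}\log\nu(s,\bs{\mu})\,{\rm d}N(s)-\int_0^{\zeta_1}\nu(s,\bs{\mu})\,{\rm d}s$ (this identification is accurate: the omitted first-customer patience term and the telescoping of the inter-arrival integrals with $T^{\mathrm{inc}}_1$ work exactly as you say) and then read off the score at $\bs{\mu}_0$ as a stochastic integral against the compensated counting martingale, concluding by optional stopping at $\zeta_1$. The integrability you need is indeed available: $\vert\nabla_{\bs{\alpha}}\log\lambda_{\bs{\alpha}_0}\vert\le\kappa/\lambda_{\min}$ from (A2)--(A3), the $\bs{\theta}$-part of the compensator integral equals $\int_0^{\zeta_1}\vert\nabla_{\bs{\theta}}\tilde{H}_{\bs{\theta}_0}(\Delta(s))\vert\lambda_{\bs{\alpha}_0}(s)\,{\rm d}s\le G_1\lambda_{\max}R_1$ via (A5), and the jump part is dominated by $\sum_i G_2(W_{1,i})$, integrable by (A6) and Theorem~\ref{theorem:properties_of_system}(c). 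The paper instead argues pathwise: it differentiates $q(\bs{Z}_1,\cdot)$ term by term, dominates the partial derivatives by the integrable random variables $\kappa C_1/\lambda_{\min}+\kappa R_1$ and $\sum_i G_2(W_{1,i})+\lambda_{\max}G_1(R_1-A_{1,1})$, interchanges $\nabla$ and $\exptn$ by dominated convergence, and then invokes the first-order condition for the maximizer of $\bs{\mu}\mapsto\exptn\, q(\bs{Z}_1,\bs{\mu})$ at the interior point $\bs{\mu}_0$. Your argument buys independence from that Kullback--Leibler maximization fact (which the paper only asserts, in the consistency proof), at the cost of importing the compensator machinery and having to verify predictability of the integrand (one must evaluate $\Delta$ at $s-$) and the optional-stopping integrability conditions; the paper's route is more elementary in machinery but leans on the information inequality. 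One shared caveat: like the paper's own proof, you implicitly use (A5) for the uniform bound on $\nabla_{\bs{\theta}}\tilde{H}_{\bs{\theta}}$, even though (A5) is absent from the lemma's stated hypothesis list.
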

\begin{proof}
    Deferred to Appendix \ref{section: appendix}.
\end{proof}

\begin{proof}[Proof of Theorem \ref{theorem:asymptotic_normality}]
As $n \rightarrow \infty$, by Theorem \ref{theorem:properties_of_system}(a), $N_r \rightarrow \infty$. Therefore, by the central limit theorem in combination with Lemma~\ref{lemma:E(q_dot)=0},
\begin{align}
    \sqrt{N_r} \ \nabla\mathcal{L}_{N_r}\big({\bs{\mu}_0}\big) = \sqrt{N_r}\times\frac{1}{N_r}\sum_{i=1}^{N_r} \nabla q\big(\bs{Z_i}, {\bs{\mu}_0}\big) &= \sqrt{N_r}\Bigg(\frac{1}{N_r}\sum_{i=1}^{N_r} \nabla q\big(\bs{Z_i}, {\bs{\mu}_0}\big) - \exptn \big[\nabla q\big(\bs{Z_i}, {\bs{\mu}_0}\big)\big]\Bigg) \notag
    \\
    &\overset{\text{d}}{\longrightarrow} \ \mathcal{N}\Big(0, \text{Var}\big(\nabla q\big({\bs{Z}_1}, {\bs{\mu}_0}\big)\big) \Big), \label{asymptotic_normality:eqn:1}
\end{align}
where, using Lemma \ref{lemma:E(q_dot)=0} again, the variance term is given by
\begin{align}\label{asymptotic_normality:eqn:2}
    \text{Var}\big(\nabla q\big({\bs{Z}_1}, {\bs{\mu}_0}\big)\big) &= \exptn\bigg[\Big(\nabla q\big({\bs{Z}_1}, {\bs{\mu}_0}\big) - \exptn\nabla q\big({\bs{Z}_1}, {\bs{\mu}_0}\big)\Big)\Big(\nabla q\big({\bs{Z}_1}, {\bs{\mu}_0}\big) - \exptn\nabla q\big({\bs{Z}_1}, {\bs{\mu}_0}\big)\Big)^{\text{T}} \Big] \notag
    \\ 
    &= \exptn\Big[\nabla q\big({\bs{Z}_1}, {\bs{\mu}_0}\big) \nabla q\big({\bs{Z}_1}, {\bs{\mu}_0}\big)^{\text{T}} \Big] \notag
    \\
    &= -\exptn\Big[\nabla^2 q\big({\bs{Z}_1}, {\bs{\mu}_0}\big)\Big],
\end{align}
where the final equality has been explained in \cite[Chapter 18]{ferguson2017course}.
As $n \rightarrow \infty$, for all $\bs{\mu} \in \Theta$, we also have by the strong law of large numbers,
\begin{align}
    \nabla^2 \mathcal{L}_{N_r} \big(\bs{\mu}\big) = \frac{1}{N_r} \sum_{i=1}^{N_r} \nabla^2 q\big(\bs{Z_i}, \bs{\mu}\big) \overset{\text{a.s}}{\longrightarrow} \exptn\Big[\nabla^2 q\big({\bs{Z}_1}, \bs{\mu}\big)\Big]. \label{asymptotic_normality:eqn:3}
\end{align}
For some $c \in (0,1)$ and $\tilde{\bs{\mu}}_{n} = c{\bs{\mu}_0} + (1-c)\hat{\bs{\mu}}_{n}$, by the mean-value theorem, 
\begin{align*}
    \nabla \mathcal{L}_{N_r}\big(\hat{\bs{\mu}}_{n}\big) = \nabla \mathcal{L}_{N_r}\big(\bs{\mu}_0\big) + \nabla^2 \mathcal{L}_{N_r}\big(\tilde{\bs{\mu}}_{n}\big) \cdot \big(\hat{\bs{\mu}}_{n} - \bs{\mu}_0 \big).
\end{align*}
By Theorem \ref{theorem:consistency}, we know that $\hat{\bs{\mu}}_{n} \overset{\text{a.s.}}{\rightarrow} \bs{\mu}_0$, and by Assumption (A1), $\bs{\mu}_0 \in \Theta^{\circ}$. 
This means that there exists an almost surely finite $\tilde{n}$, such that for all $n > \tilde{n}$, $\hat{\bs{\mu}}_n \in \Theta^\circ$.
% We are more concerned with the limiting distribution of the estimation error, and therefore care more about what happens as $n \gg \tilde{n}$.
For $n > \tilde{n}$, since $\hat{\bs{\mu}}_{n} \in \Theta^{\circ}$ is a maximum likelihood estimate, we know that $\nabla \ell_n\big(\hat{\bs{\mu}}_{n};{\bs A},{\bs W} \,\big\vert \,{\bs X}\big) = 0$ for all $n> \tilde{n}$, and therefore, $\nabla \mathcal{L}_{N_r}\big(\hat{\bs{\mu}}_{n}\big)  \overset{\text{d}}{\longrightarrow} 0$.
% \footnote{\textcolor{blue}{LR: Is this equation true for all $n\geq 1$? For finite samples the optimal solution could be on the boundary (but this happens only a finite number of times almost surely so the limit works out). Please check.}}. 
Hence, the above equation can be rewritten as 
\begin{align}
    \nabla^2 \mathcal{L}_{N_r}\big(\tilde{\bs{\mu}}_n\big) \cdot \big(\hat{\bs{\mu}}_{n} - \bs{\mu}_0 \big) = - \nabla \mathcal{L}_{N_r}\big(\bs{\mu}_0\big).
\end{align}
Multiplying both sides by $\sqrt{N_r}$ we get
\begin{align*}
    \nabla^2 \mathcal{L}_{N_r}\big(\tilde{\bs\mu}_n\big) \cdot \sqrt{N_r}\big(\hat{\bs{\mu}}_{n} - \bs{\mu}_0\big) = -\sqrt{N_r} \nabla \mathcal{L}_{N_r}\big(\bs{\mu}_0\big).
\end{align*}
Since $\hat{\bs{\mu}}_{n} \overset{\text{a.s.}}{\longrightarrow} {\bs{\mu}_0}$ as $n \rightarrow \infty$, and $\big\vert \tilde{\bs{\mu}}_{n} - {\bs{\mu}_0}\big\vert \leq \big\vert \hat{\bs{\mu}}_{n} - {\bs{\mu}_0}\big\vert$, we can conclude that $\tilde{\bs{\mu}}_{n} \overset{\text{a.s.}}{\longrightarrow} {\bs{\mu}_0}$.
Using this fact and Equations (\ref{asymptotic_normality:eqn:1}), (\ref{asymptotic_normality:eqn:2}), (\ref{asymptotic_normality:eqn:3}) as $N_r \rightarrow \infty$, we conclude
\begin{align}\label{equation:asymptotic_normality:sqrt(N_r)_convergence}
    \sqrt{N_r}\big(\hat{\bs{\mu}}_{n} - \bs{\mu}_0\big) \overset{\text{d}}{\longrightarrow} & -\exptn\Big[\nabla^2 q\big({\bs{Z}_1}, {\bs{\mu}_0}\big)\Big]^{-1} \mathcal{N}\bigg(0,  -\exptn\Big[\nabla^2 q\big({\bs{Z}_1}, {\bs{\mu}_0}\big)\Big] \bigg) \notag
    \\
    \overset{\text{d}}{=} & \ \mathcal{N}\bigg(0, -\exptn\Big[\nabla^2 q\big({\bs{Z}_1}, {\bs{\mu}_0}\big)\Big]^{-1}\bigg) 
    \overset{\text{d}}{=}  \ \mathcal{N}\Big(0, I\big({\bs{\mu}_0}\big)^{-1} \Big).
\end{align}
% where $I({\bs{\mu}_0})$ is defined in \eqref{eq:defI}.
Recalling that ${n}/{N_r} \overset{\text{a.s.}}{\longrightarrow} \exptn  C_1$, and multiplying (\ref{equation:asymptotic_normality:sqrt(N_r)_convergence}) on both sides with $\sqrt{n/N_r}$, we conclude
\begin{align}
    \sqrt{n} \big({\hat{\bs{\mu}}_n} - {\bs{\mu}_0}\big) \overset{\text{d}}{\longrightarrow} \mathcal{N}\Big(0, \ \exptn C_1\, I\big({\bs{\mu}_0}\big)^{-1} \Big)\,,
\end{align}
as $n\to\infty$.
\end{proof}
% \begin{remark}\label{remark:I_is_positive_definite}
%     The positive definiteness of the matrix $I\big({\bs{\mu}_0}\big)$ is verified in Section \ref{section: appendix}.
% \end{remark}

\section{GENERAL FRAMEWORK FOR JOINING \\BASED ON INCOMPLETE INFORMATION}\label{section: joining decisions based on incomplete information}

% Find verifiable conditions for $\psi$

% General framework --> Likelihood calculations --> Additional assumptions ---> technical details about what changes in proofs

% Contributions and organization ---> more precise

In Section \ref{section: Parametric estimation procedure}, we assured the reader that we performed calculations for the case $\Delta(t) = V(t)$ only for the sake of demonstration.
In this section, we will generalize Theorems \ref{theorem:consistency} and \ref{theorem:asymptotic_normality} for a generalized delay announcement $\Delta(t) = \psi\big(\mathcal{R}(t)\big)$.
The consistency and asymptotic normality proofs of this estimator once again rely on the underlying idea of dividing the sample path into distinct regeneration cycles. 
For regeneration cycle $j \in \{1,2,\ldots,N\}$, let the objects $\{\tilde{A}_{j,i}\}_{i=1,2,\ldots,C_j}$, $\zeta_j$, $R_j$, and $C_j$ carry the same meaning as before. 
Let $\bs{Z_j}$ denote the new data vector corresponding to the $j$-th regeneration cycle. As in Sections~\ref{section: Constructing i.i.d regeneration cycles} and~\ref{section: strong consistency}, we define, for compactness abbreviating $h(x):=\tilde{H}_{\bs{\theta}}(\psi(\mathcal{R}(x)))$,
\begin{align}\label{eqn:q:incomplete}
    q\big(\bs{Z_j}, \bs{\mu}\big) = &\sum_{i=1}^{C_j} \log\lambda_{\bs{\alpha}}\big(\tilde{A}_{j,i}\big) + \sum_{i=1}^{C_j} \log h(\tilde{A}_{j,i}) - \sum_{i=2}^{n} \int_{0}^{\tilde{A}_{j,i}} \lambda_{\bs{\alpha}}\big(u + \tilde{A}_{j,i-1}\big) h(u+\tilde{A}_{j,i-1}) \, \mathrm{d}u \notag
    \\
    & - \int_{0}^{A_{j,1}} \lambda_{\bs{\alpha}}(u)h(u)\,{\rm d}u -\int_{0}^{R_j - \tilde{A}_{j,C_j}} \lambda_{\bs{\alpha}}\big(u + \tilde{A}_{j,C_j}\big) h(u+\tilde{A}_{j,C_j})\, \mathrm{d}u.
\end{align}

\begin{assumption**}[A11] Either of the following additional assumption is imposed:
\begin{itemize}
        \item[{\rm (a)}]
        Let $\tilde{\Omega}$ be the collection of all possible states of our system (as explained at the beginning of Section \ref{section: joining decisions based on incomplete information}), and let $\tilde{V}: \tilde{\Omega} \mapsto \mathbb{R}_{+}$ be the virtual waiting time function, in the sense that, given a vector of residual service times of customers $\bs{r} \in \tilde{\Omega}$, $\tilde{V}(\bs{r})$ is the corresponding virtual waiting time.
        Then there exists a constant $\tilde{M} > 0$ such that for every $\bs{r} \in \tilde{\Omega}$, we must have $\psi(\bs{r}) \leq \tilde{M}\tilde{V}(\bs{r})$.

        \item[{\rm (b)}] $\psi\big(\mathcal{R}(t)\big) = \psi_1\big(L(t)\big)$, where $\psi_1$ is a linear function, and $\exptn\big[C_1^2\big] < \infty$.
    \end{itemize}
\end{assumption**}

\textbf{Discussion of Assumptions, continued.}
\begin{itemize}
    \item[$\circ$] (A11)(a) essentially says that given the virtual waiting time is $V(t)$, the delay proxy $\psi(\mathcal{R}(t))$ should not be larger than $\tilde{M}V(t)$. In particular, model 1 given earlier clearly follows (A11)(a) with the choice $\tilde{M}=1$.
    \item[$\circ$] (A11)(b) is required for delay proxies based on the number of customers in the system length, such as Models 2,3. The first part of the assumption reflects this. The second part assumes that the second moment of the number of customers in a cycle is finite. Verifying this condition can be a challenging problem in the context of M$_t$/G/$s$+H systems. %Lyapunov function techniques used in the appendix of this work fail to yield the required result.
    % Reference \cite{jacobovic2023stochasticbounds} shows that for $s=1$, (A11)(b) reduces to $\exptn[B^2] < \infty$.
    % E[C1^2] < inf is challenging, recent progress made in 6.
    In theory, we can allow $\psi\big(\mathcal{R}(t)\big) = \psi_1\big(L(t)^{1+\eta}\big)$, but then we require $\exptn\big[C_1^{2+\eta}\big] < \infty$. %Models 2,3 come under the hood of Assumption (A11)(b). 
\end{itemize}
Observe that model 4 satisfies neither (A11)(a) or (A11)(b). However, Assumption (A11) merely tries to provide sufficient conditions under which Proposition \ref{propn} holds. We do not believe that these are strictly necessary.

% Nevertheless, we demonstrate that the estimation procedure works for such models in the next section.}\footnote{\textcolor{blue}{LR: After additional thought I think we should try to push this a bit further. It does not look good that 3 out of 4 functions don't satisfy the conditions. For models 2-3 there should be an easy fix: when the message relies on the queue length then the linear function should be with respect to $|\mathcal{R}(t)|$, right? This basically imposes the same condition, but on a discrete rather than continuous distribution. Only model 4 is trickier because it pertains to the maximum workload in a cycle.}}

\begin{propn} \label{propn}
    Under assumptions {\rm (A1)--(A6)} and {\rm (A11)(a)} or {\rm (A11)(b)},  the statement of Theorem~\ref{theorem:consistency} holds in the generalized setting of this section. Furthermore, under assumptions {\rm (A1)--(A10)} and {\rm (A11)(a)} or {\rm (A11)(b)}, the statement of Theorem \ref{theorem:asymptotic_normality} holds in the generalized setting of this section, where $q(\bs{Z}_1, \bs\mu)$ is now given by \eqref{eqn:q:incomplete}.
\end{propn}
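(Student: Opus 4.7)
The plan is to show that the entire apparatus of Sections~\ref{section: Constructing i.i.d regeneration cycles} and~\ref{section: strong consistency} transfers verbatim once one replaces the old $q(\bs{Z}_j,\bs{\mu})$ from \eqref{q(z,mu)} with the new one in \eqref{eqn:q:incomplete}. The regeneration decomposition itself does not see $\psi$: the cycles are defined by the event $\{L(t)=0,\ t\in\mathbb{N}\}$, so the i.i.d.\ structure of the cycle summands, together with the identity $\ell_n = \sum_{j=1}^{N_r} q(\bs{Z}_j,\bs\mu)$, is inherited directly. Consequently, only two auxiliary results need to be re-proved: the domination bound of Lemma~\ref{D_Andrews_lemma_1}, and the score identity of Lemma~\ref{lemma:E(q_dot)=0}. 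Granted these, the rest of the proofs of Theorems~\ref{theorem:consistency} and~\ref{theorem:asymptotic_normality} (Andrews' uniform SLLN, Kullback--Leibler identifiability, CLT, SLLN for $\nabla^2 \mathcal{L}_{N_r}$, and the mean-value expansion) carry over unchanged.

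First I would reprove Lemma~\ref{D_Andrews_lemma_1} for the new $q$. The $\bs\alpha$-contributions to $|q(\bs{Z}_j,\bs\mu')-q(\bs{Z}_j,\bs\mu)|$ are controlled exactly as in the original proof by (A2), (A3), and Theorem~\ref{theorem:properties_of_system}(a)--(b). The $\bs\theta$-contributions split into (i) an integral part, bounded by $G_1\lambda_{\max}R_j\,d(\bs\theta,\bs\theta')$ via (A5), with finite mean by Theorem~\ref{theorem:properties_of_system}(a), and (ii) a summation part $\sum_{i=1}^{C_j}\bigl|\log\tilde H_{\bs\theta}-\log\tilde H_{\bs\theta'}\bigr|\bigl(\psi(\mathcal{R}(\tilde A_{j,i}^{-}))\bigr)$, bounded via (A6) by $\sum_{i=1}^{C_j} G_2\bigl(\psi(\mathcal{R}(\tilde A_{j,i}^{-}))\bigr)\,d(\bs\theta,\bs\theta')$. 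Under (A11)(a), $\psi\le\tilde M V$ and $V(\tilde A_{j,i}^{-})=W_{j,i}$, so linearity of $G_2$ produces a dominant linear in $C_j$ and $\sum_i W_{j,i}$, both with finite mean by Theorem~\ref{theorem:properties_of_system}(b)--(c). Under (A11)(b), linearity of $\psi_1$ and the pointwise bound $L(\tilde A_{j,i}^{-})\le C_j$ produce a dominant linear in $C_j$ and $C_j^2$, whose expectation is finite by the standing assumption $\exptn C_1^2<\infty$. An identical argument using (A7) delivers $\exptn|q(\bs{Z}_1,\bs\mu_0)|<\infty$, which together with the Lipschitz bound lets \cite[Theorem~3(b)]{andrews1992generic} yield the uniform SLLN, after which the identifiability argument and the conclusion of Theorem~\ref{theorem:consistency} go through word for word.

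For asymptotic normality, the analog of Lemma~\ref{lemma:E(q_dot)=0} is obtained by the same score-function argument: once the cross-cycle correction is cancelled as exhibited in Section~\ref{section: Constructing i.i.d regeneration cycles}, $q(\bs{Z}_j,\cdot)$ is, cycle by cycle, the conditional log-likelihood of the observed effective arrival process, so $\exptn[\nabla q(\bs{Z}_1,\bs\mu_0)]=\bs{0}$ follows from the standard exchange of differentiation and expectation. That exchange is justified by (A8)--(A9) combined with the same domination scheme as above, now applied to $\nabla_{\bs\alpha}\lambda_{\bs\alpha}$ and $\nabla_{\bs\theta}\tilde H_{\bs\theta}$ in place of their increments. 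The rest of the proof of Theorem~\ref{theorem:asymptotic_normality} -- CLT for $\sqrt{N_r}\nabla\mathcal{L}_{N_r}(\bs\mu_0)$, SLLN for $\nabla^2\mathcal{L}_{N_r}$, invertibility via (A10), mean-value expansion, and rescaling by $\sqrt{n/N_r}\to\sqrt{\exptn C_1}$ -- is then literally the same as before.

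The main obstacle, and the sole reason (A11) is introduced, is controlling the summation $\sum_{i=1}^{C_j} G_2\bigl(\psi(\mathcal{R}(\tilde A_{j,i}^{-}))\bigr)$ and its gradient analog. In the original $\Delta(t)=V(t)$ setting this sum automatically reduces to a linear function of $\sum_i W_{j,i}$ and $C_j$, for which Theorem~\ref{theorem:properties_of_system}(c) is tailor-made. For a general $\psi$ no such identity is available: one must either dominate $\psi$ pointwise by $V$ -- option (a) -- or rely on the crude bound $L(\cdot)\le C_j$ and trade it for a second-moment requirement on the cycle size -- option (b). Verifying $\exptn C_1^2<\infty$ in concrete parametric M$_t$/G/$s$+H families is itself non-trivial, which is why the hypothesis is left in that form and why the proposition lists (A11)(a) and (A11)(b) as two alternative routes through the same domination step.
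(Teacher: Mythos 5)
Your proposal is correct and follows essentially the same route as the paper: re-prove the domination bound of Lemma~\ref{D_Andrews_lemma_1} for the new $q$ from \eqref{eqn:q:incomplete}, using (A11)(a) to dominate $\psi(\mathcal{R}(\cdot))$ by $\tilde M$ times the virtual waiting time or (A11)(b) together with $L\le C_j$ and $\exptn C_1^2<\infty$, and then let the remaining machinery (uniform SLLN, score identity, CLT, mean-value expansion) carry over unchanged. Your treatment is in fact slightly more explicit than the paper's, which after re-establishing Lemma~\ref{D_Andrews_lemma_1} simply asserts that the rest of the argument transfers because the new $q(\bs{Z}_j,\bs\mu)$ are i.i.d.\ and sum to the log-likelihood.
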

The proof of Proposition \ref{propn} is very similar to that of Theorems \ref{theorem:consistency}-\ref{theorem:asymptotic_normality}, as detailed in Appendix \ref{section: appendix}.

% \begin{remark} \label{remark:example for general framework}
%     As an example of the mechanism described in this section, in Appendix \ref{section: appendix} we discuss explicitly the estimation procedure for the case where $\psi(\mathcal{R}(t))$ corresponds to $L(t)$, the number of customers in the system at time $t$. \hfill$\Diamond$
% \end{remark}

\section{NUMERICAL EXPERIMENTS}\label{section: numerical experiments}

In this section we numerically assess the performance of our estimation procedure through a series of experiments. 
We consider three mechanisms:
\begin{itemize}
\item[(a)] customers know their exact waiting time, 
\item[(b)] customers receive an estimate for their expected waiting time based on the queue length, and 
\item[(c)] customers simply observe the number of customers in the system, and join or balk accordingly. 
\end{itemize}
For each of these, we consider an arrival rate function that is a sinusoidal or a weighted sum of sinusoidals (where the ratios of the periods are rational numbers, making the aggregate arrival rate periodic), and exponential, hyper-exponential, Pareto, or geometric distributed patience. 
Maximization of the log-likelihood is potentially challenging, as there is no guarantee whatsoever on local maxima also being global maxima. 
To deal with this complication, we have used (the Python implementation of) the L-BFGS-B and Nelder-Mead algorithms. 
For a given observation of the arrival process, service times and patience levels, we simulate 5 or 6 different service systems, with 1, 2, 4, 8, and 16/32 servers. 
We repeat this procedure 100 times to generate empirical confidence intervals of the obtained estimates, which we visualize via box plots.
In each of the experiments we report the number of arrivals that has been used, which has typically been chosen such that the width of the corresponding confidence intervals is `sufficiently small'.

\subsection{Model I: Exact waiting time is known}
In this model, customers are provided with the exact value of the virtual waiting time upon arrival. This means that the information function is assumed to be $\Delta(t) = V(t)$, so that we are in the framework of Sections \ref{section: Model and preliminaries}--\ref{section: strong consistency}.

\subsubsection{Sinusoidal arrival rate and exponentially distributed patience}
In this example, our model of choice is an M$_t$/M/$s$+H system, with arrival rate, service time distribution and patience distribution given by
\begin{align*}
    \lambda_{\bs{\alpha}}(t) &= 50 + 20\sin\big(1 - 0.1t\big),
    \\
    M &\sim \text{Exp}(0.2),
    \\
    H_{\bs{\theta}}(x) &= 1 - e^{-0.5x}.
\end{align*}
The parameters to be estimated are therefore $\bs{\alpha}_0 = (50, 20, 1)$, and $\bs{\theta}_0 = 0.5$.  
We work with simulated data corresponding to $20,000$ total arrivals (i.e., balking and non-balking customers).
The resulting box plots are shown in Figure \ref{fig:box plots:model I: submodel 1}. 
Recalling Example \ref{example: best and worst instances for estimation}, where we discussed how the service capacity affects the  accuracy of the estimates, 
in this example we observe the same phenomenon: the accuracy of the $\bs{\alpha}$ estimates improves as the number of servers increases. 
On the other hand, observe from Figure~\ref{fig:box plots:model I: submodel 1: theta} that $\bs{\theta}$ is estimated poorly when the number of servers is relatively large: 
then the virtual waiting time becomes small, so that virtually any arriving customer joins, thus not providing any information on the patience distribution. 
In a more formal sense, this can also be observed from Equation \eqref{eqn:log_likelihood:exact}, by noting that the terms which contain $\theta$ start disappearing: if $W_{i-1}$ and $ X_{i-1}$ are virtually zero, then $W_{i-1} + X_{i-1} - A_i$ typically takes negative values, and therefore, $\log \tilde{H}_{\bs{\theta}}(W_{i-1}+X_{i-1}-A_i)$ is zero. 
A similar effect taking place inside the integral, all terms containing $\bs{\theta}$ start dissolving for such higher numbers of servers, thus prohibiting the optimization of the log-likelihood over $\bs{\theta}$. 
The patience parameter ${\bs \theta}$ is generally best estimated if the number of servers allows for a system where the virtual waiting takes a wide spectrum of values, which is typically the case when the number of servers is relatively low.
However, if the number of servers is really low, then the fraction of customers joining is low, which has a detrimental effect on the accuracy of the estimator of $\bs\theta$.
Due to the conflict between the accuracy of the estimators for $\bs{\alpha}$ and $\bs{\theta}$, scenarios in which the number of servers is `moderate' lead to the best overall performance.

\begin{figure}[ht]
    \begin{subfigure}{0.45\textwidth}
        \includegraphics[width=\linewidth]{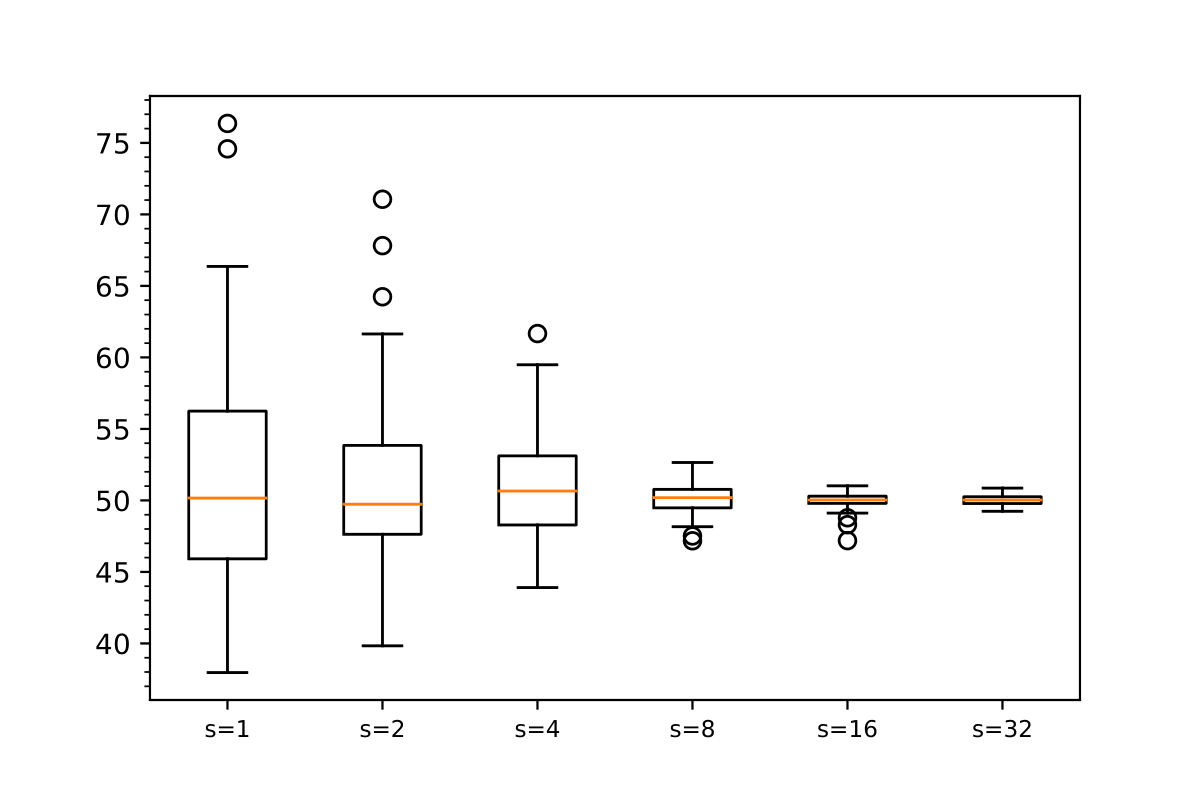}
        \caption{Confidence Intervals for $\alpha_1$ (True value = 50)} \label{fig:box plots:model I: submodel 1: a}
        \end{subfigure}\hspace*{\fill}
        \begin{subfigure}{0.45\textwidth}
        \includegraphics[width=\linewidth]{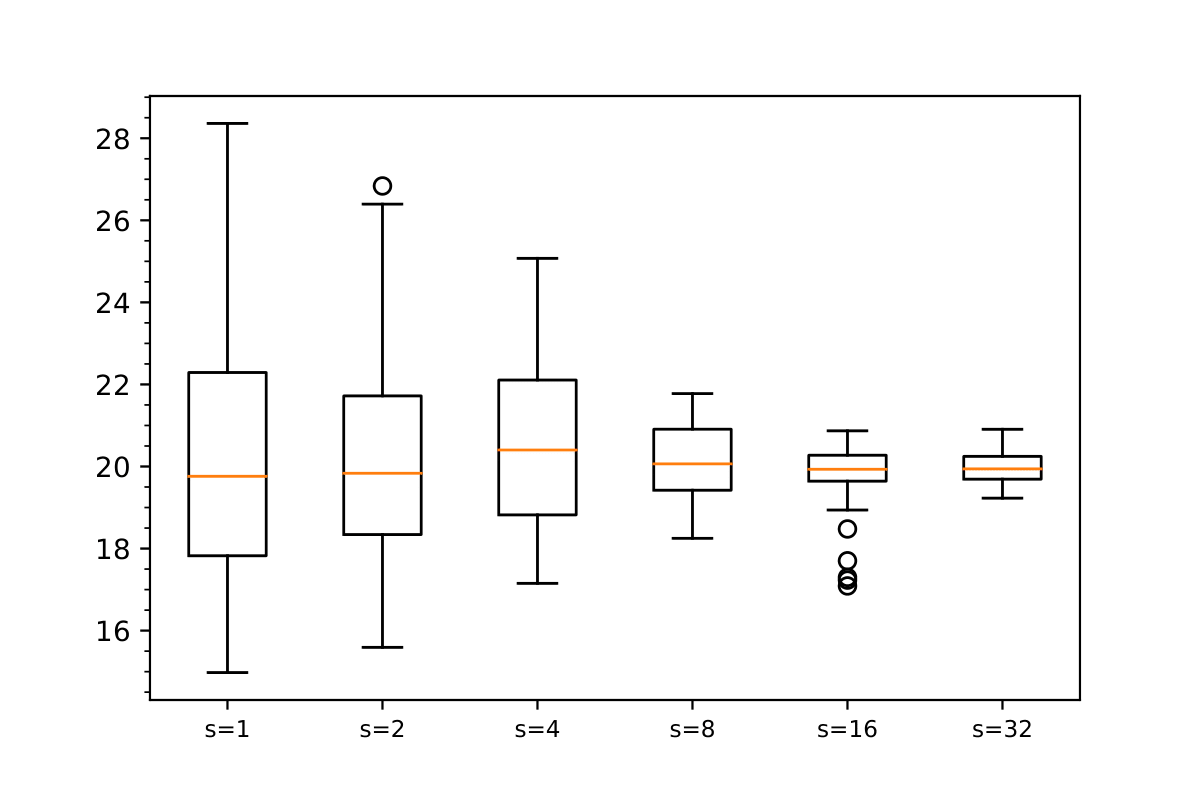}
        \caption{Confidence Intervals for $\alpha_2$ (True value = 20)} \label{fig:box plots:model I: submodel 1: b}
    \end{subfigure}
    
    \medskip
    \begin{subfigure}{0.45\textwidth}
        \includegraphics[width=\linewidth]{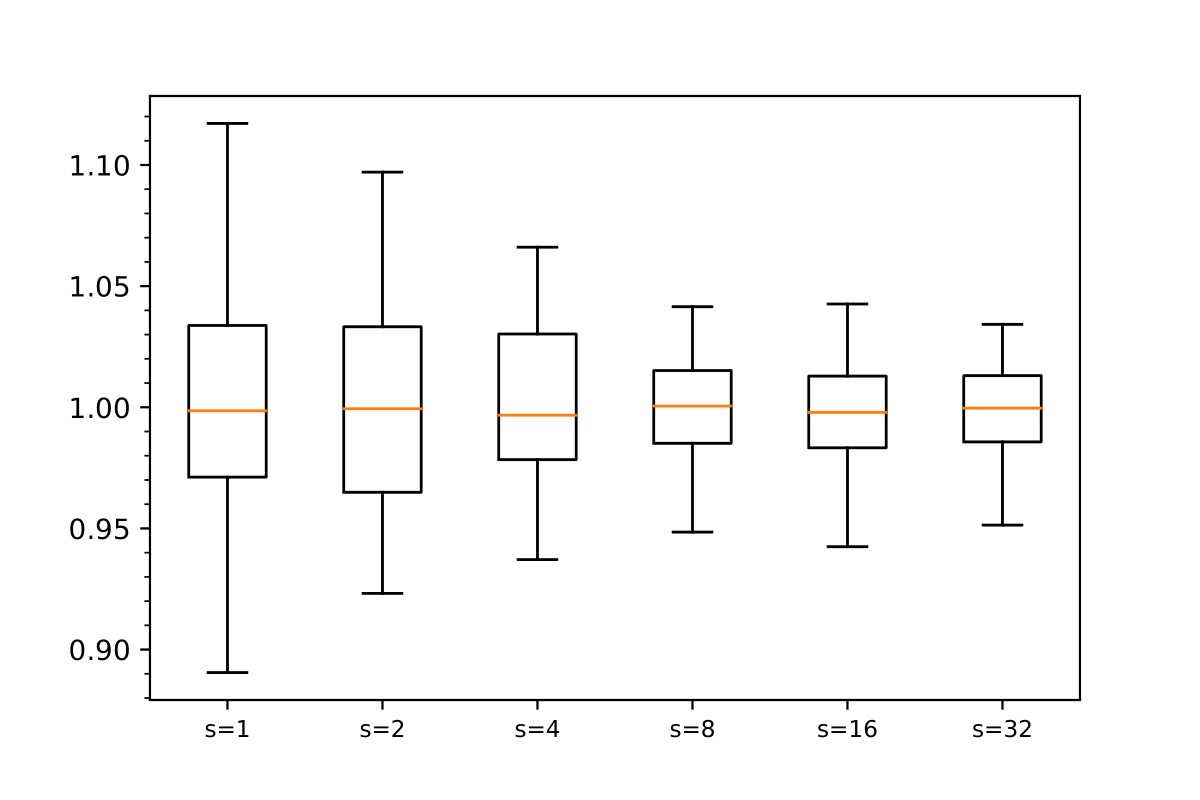}
        \caption{Confidence Intervals for $\alpha_3$ (True value = 1)} \label{fig:box plots:model I: submodel 1: c}
        \end{subfigure}\hspace*{\fill}
        \begin{subfigure}{0.45\textwidth}
        \includegraphics[width=\linewidth]{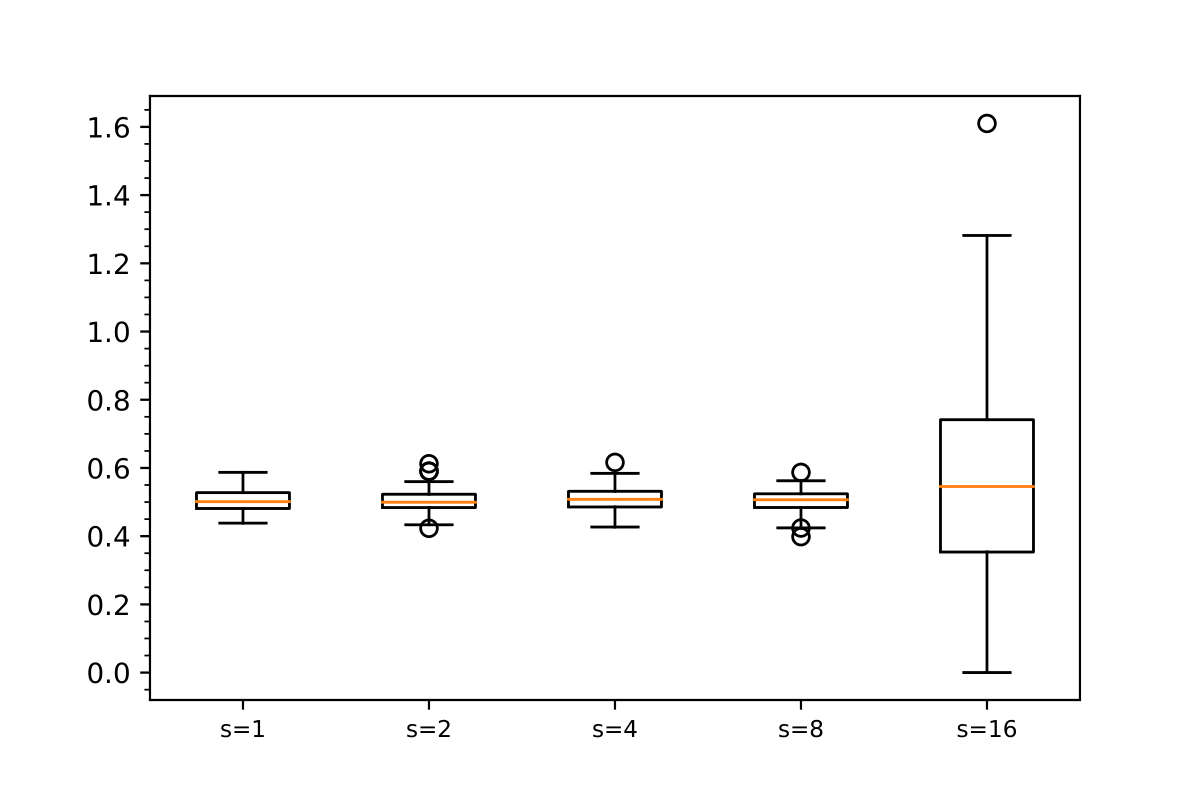}
        \caption{Confidence Intervals for $\theta$ (True value = 0.5)} \label{fig:box plots:model I: submodel 1: theta}
    \end{subfigure}

\caption{Box plots of Model I - Submodel 1 estimates} \label{fig:box plots:model I: submodel 1}
\end{figure}

\subsubsection{Weighted sinsuoidal arrival rate and hyperexponentially distributed patience}

In this example, our model of choice is an M$_t$/M/$s$+H system, with arrival rate, service distribution and patience distribution given by
\begin{align*}
    \lambda_{\bs{\alpha}}(t) &= 50 + 15\sin\big(4 - 0.1t\big) + 10\sin\big(1 - 0.5t\big),
    \\
    M &\sim \text{Exp}(0.4),
    \\
    H_{\bs{\theta}}(x) &= 1 - 0.8e^{-x} - 0.2e^{-0.1x}.
\end{align*}
The parameters to be estimated are therefore $\bs{\alpha}_0 = (50, 15, 10, 4, 1)$, and $\bs{\theta}_0 = (0.8, 1, 0.1)$. 
% With probabilities 0.8 and 0.2, an incoming customer has patience level sampled from $\text{Exp}(1)$ and $\text{Exp}(0.1)$ respectively.
In this example, the number of total arrivals is $10^6$, leading to 
the box plots of Figure~\ref{fig:box plots:model I: submodel 2}.
Similar to the previous model, we observe that accuracy of the estimator of $\bs{\alpha}$ improves as the number of servers increases. 
However, the estimates corresponding to $\bs{\theta}$ now do not exhibit a clear pattern. 
When the number of servers is low (say, $s=1$), the estimates of $\theta_3$ is accurate, whereas the estimate of $\theta_2$ is poor. 
This is because virtual waiting times are typically high, entailing that all customers with patience sampled from the $\text{Exp}(1)$ distribution tend to get rejected due to their low patience values. 
Therefore, a large proportion of non-balking customers have patience sampled from $\text{Exp}(0.1)$, so that $\theta_3$ can be estimated accurately but $\theta_2$ not. 
This model requires a substantial amount of data for the estimation procedure for multiple reasons, one of which has been discussed in Example \ref{example: identifiability issues}. 
The other reason is that the log-likelihood has a rather irregular surface, which is hard to optimize over (in comparison to the previous model).

\begin{figure}[ht]
    \begin{subfigure}{0.45\textwidth}
        \includegraphics[width=\linewidth]{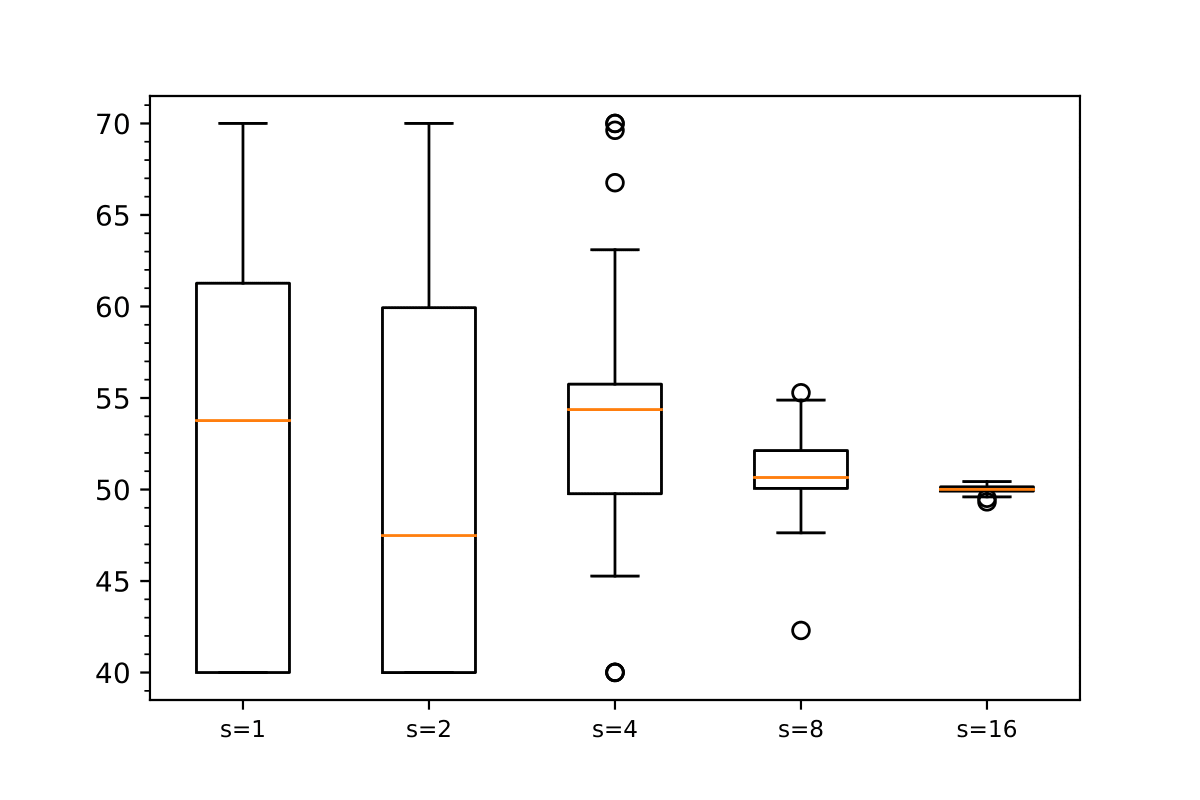}
        \caption{Confidence Intervals for $\alpha_1$ (True value = 50)} \label{fig:box plots:model I: submodel 2: a}
        \end{subfigure}\hspace*{\fill}
        \begin{subfigure}{0.45\textwidth}
        \includegraphics[width=\linewidth]{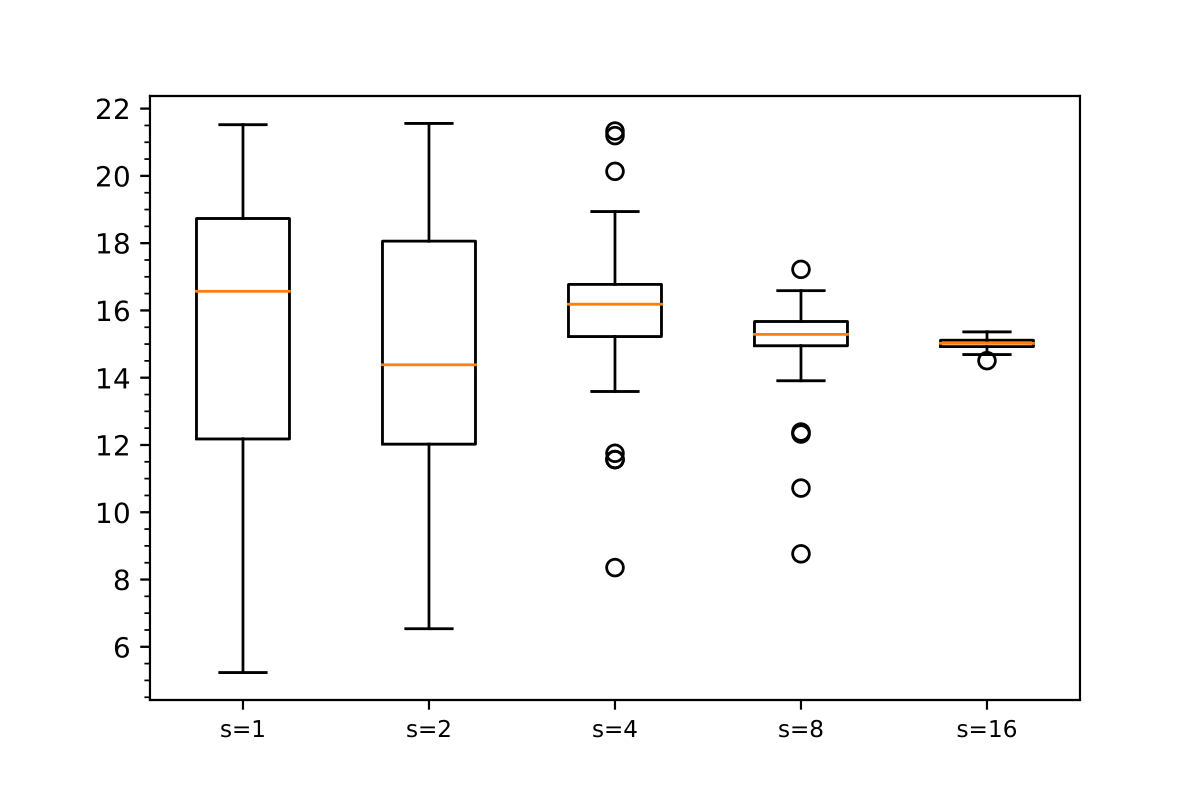}
        \caption{Confidence Intervals for $\alpha_2$ (True value = 15)} \label{fig:box plots:model I: submodel 2: b1}
    \end{subfigure}
    
    \medskip
    \begin{subfigure}{0.45\textwidth}
        \includegraphics[width=\linewidth]{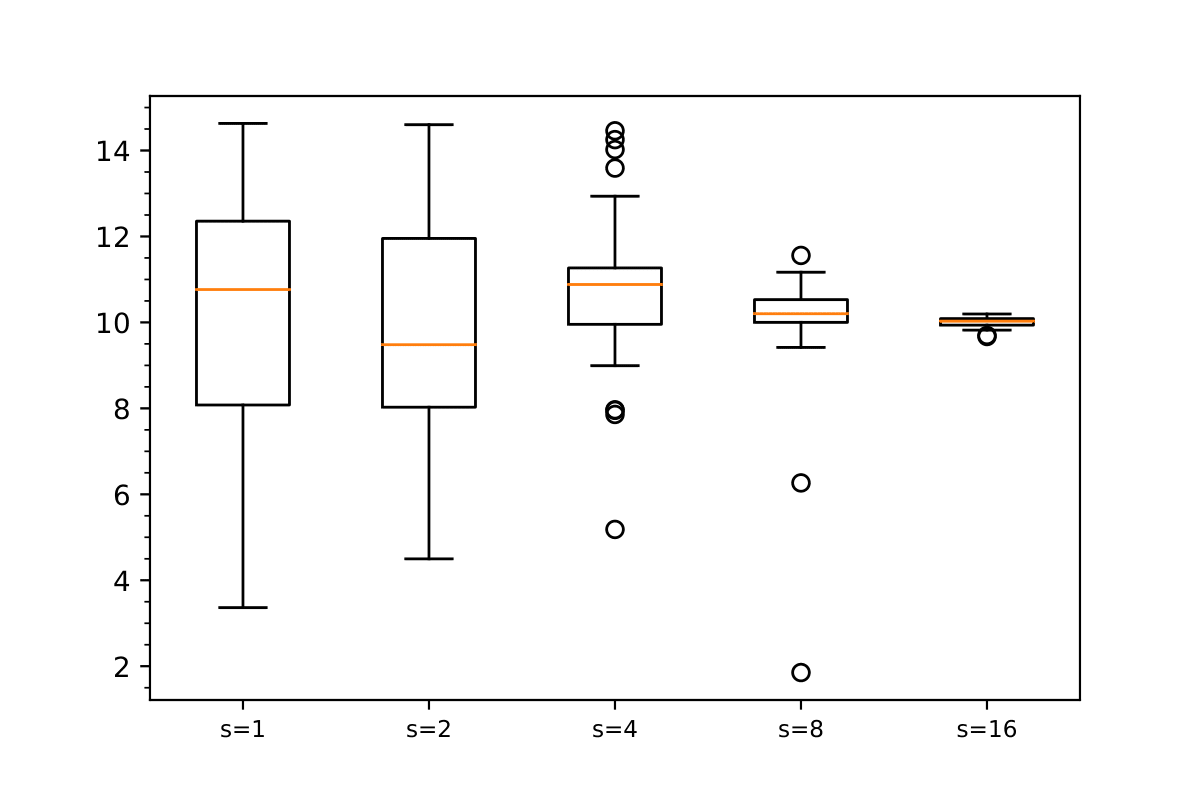}
        \caption{Confidence Intervals for $\alpha_3$ (True value = 10)} \label{fig:box plots:model I: submodel 2: b2}
        \end{subfigure}\hspace*{\fill}
        \begin{subfigure}{0.45\textwidth}
        \includegraphics[width=\linewidth]{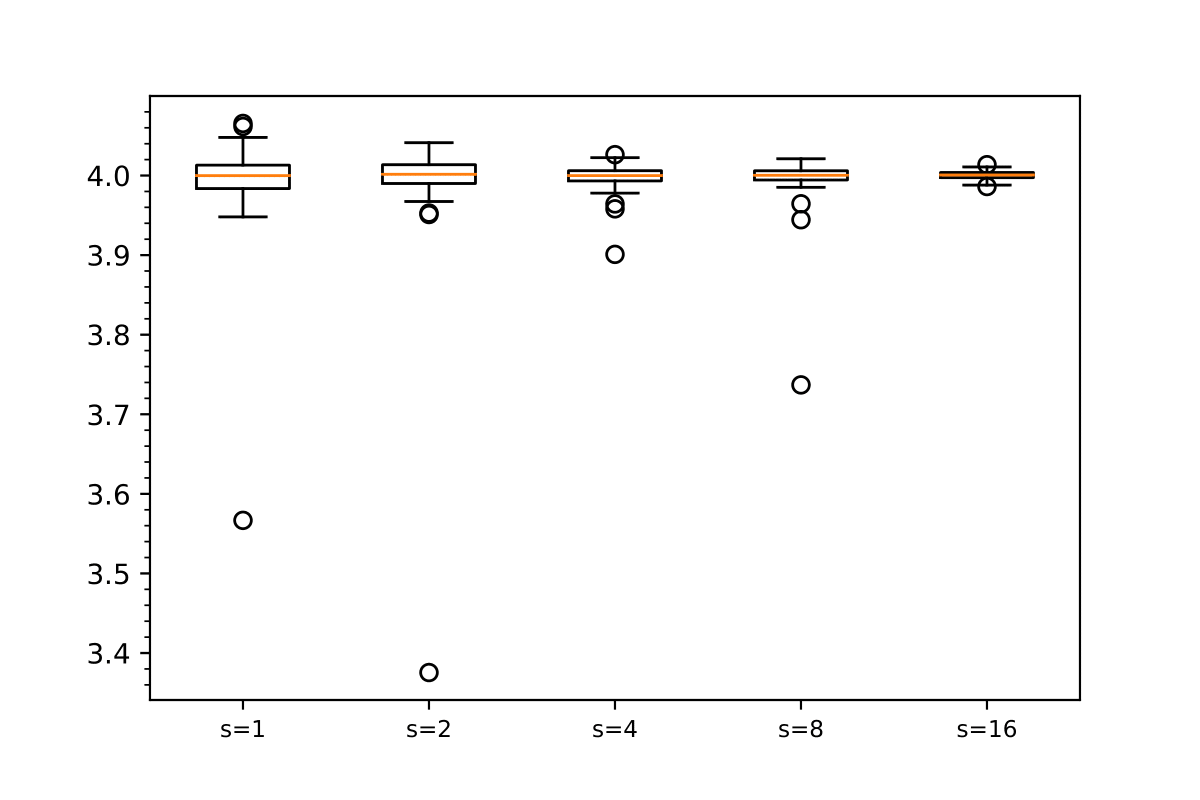}
        \caption{Confidence Intervals for $\alpha_4$ (True value = 4)} \label{fig:box plots:model I: submodel 2: c1}
    \end{subfigure}

    \medskip
    \begin{subfigure}{0.45\textwidth}
        \includegraphics[width=\linewidth]{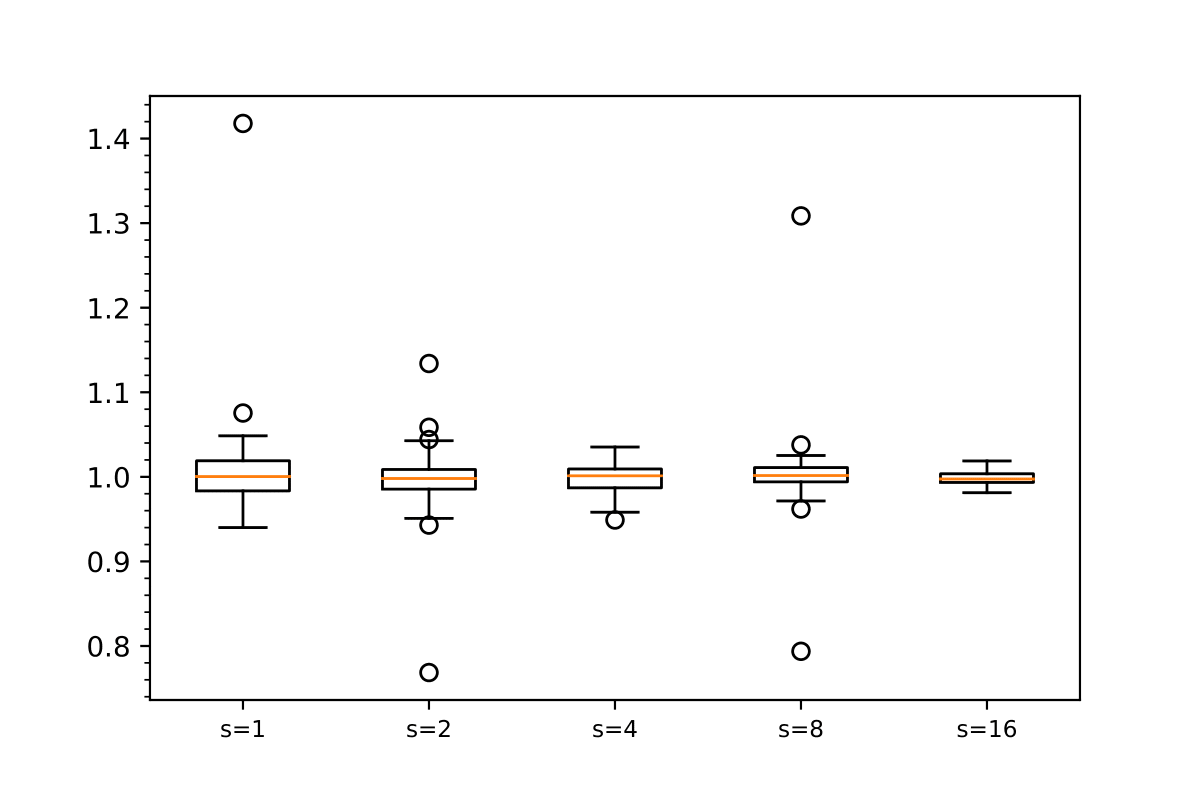}
        \caption{Confidence Intervals for $\alpha_5$ (True value = 1)} \label{fig:box plots:model I: submodel 2: c2}
        \end{subfigure}\hspace*{\fill}
        \begin{subfigure}{0.45\textwidth}
        \includegraphics[width=\linewidth]{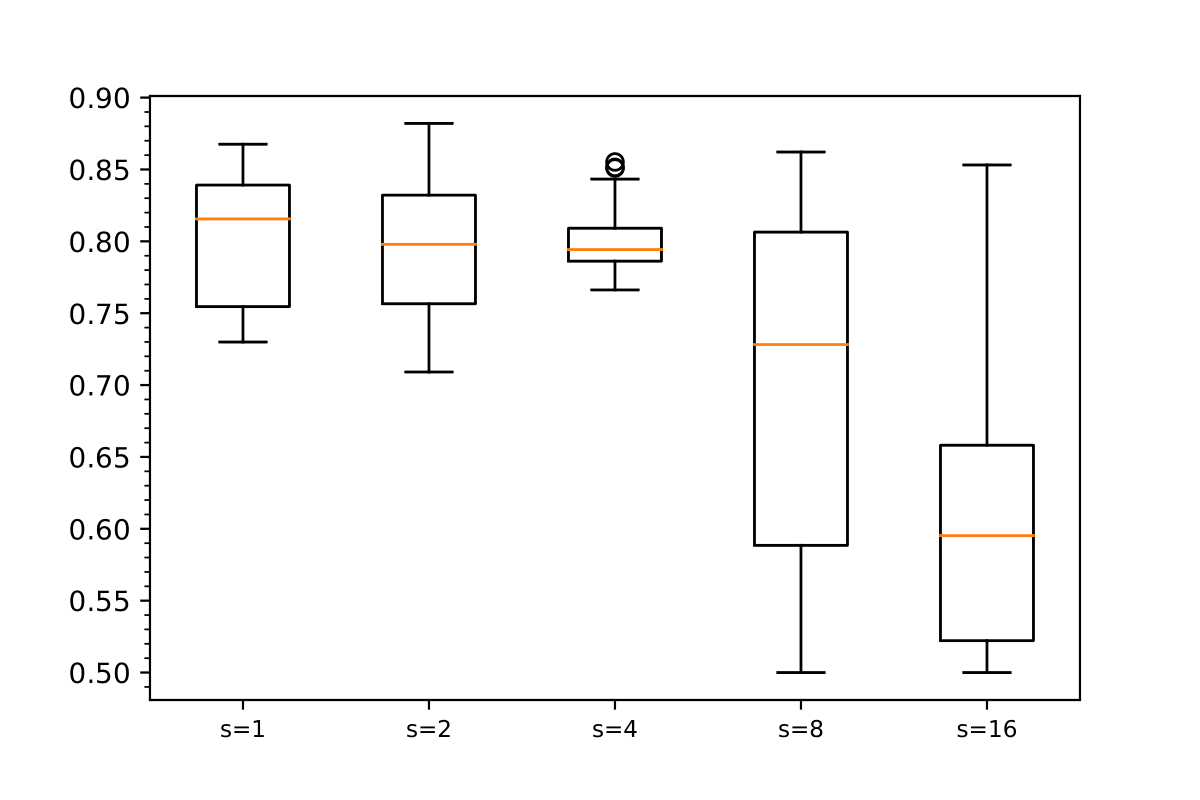}
        \caption{Confidence Intervals for $\theta_1$ (True value = 0.8)} \label{fig:box plots:model I: submodel 2: alpha}
    \end{subfigure}

    \medskip
    \begin{subfigure}{0.45\textwidth}
        \includegraphics[width=\linewidth]{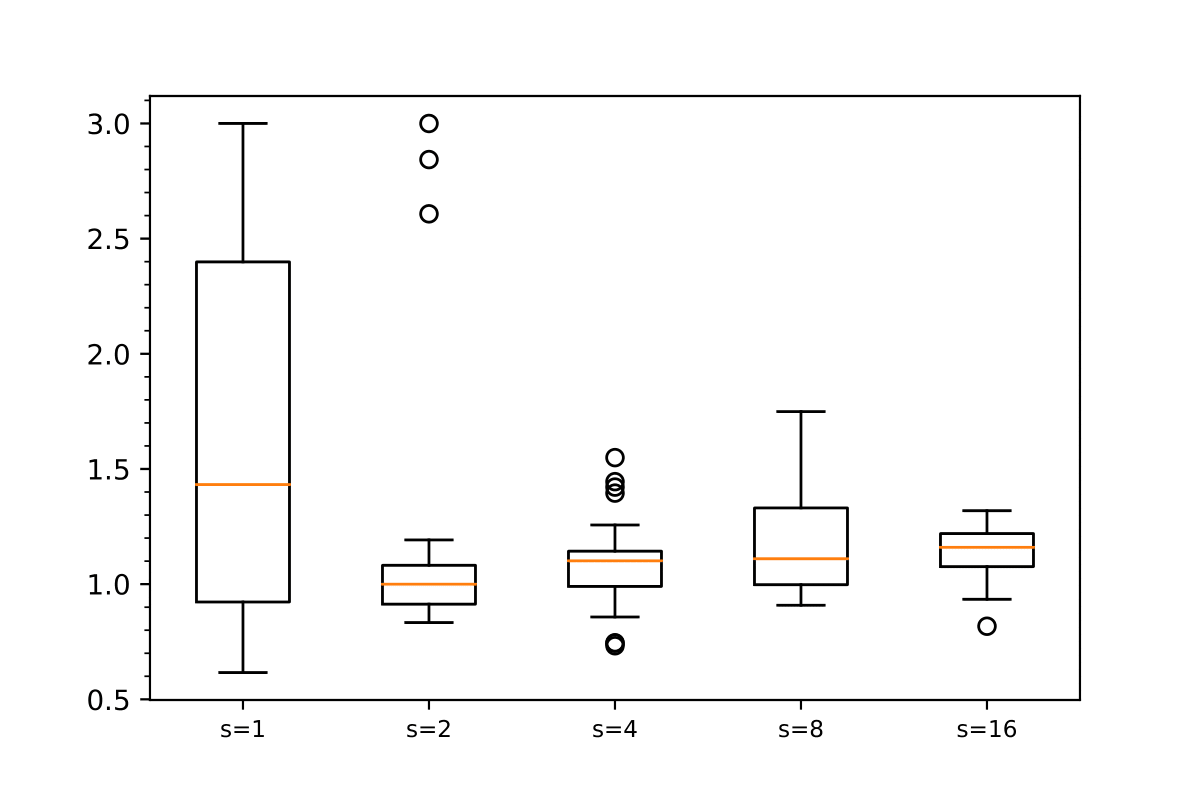}
        \caption{Confidence Intervals for $\theta_2$ (True value = 1)} \label{fig:box plots:model I: submodel 2: theta_1}
        \end{subfigure}\hspace*{\fill}
        \begin{subfigure}{0.45\textwidth}
        \includegraphics[width=\linewidth]{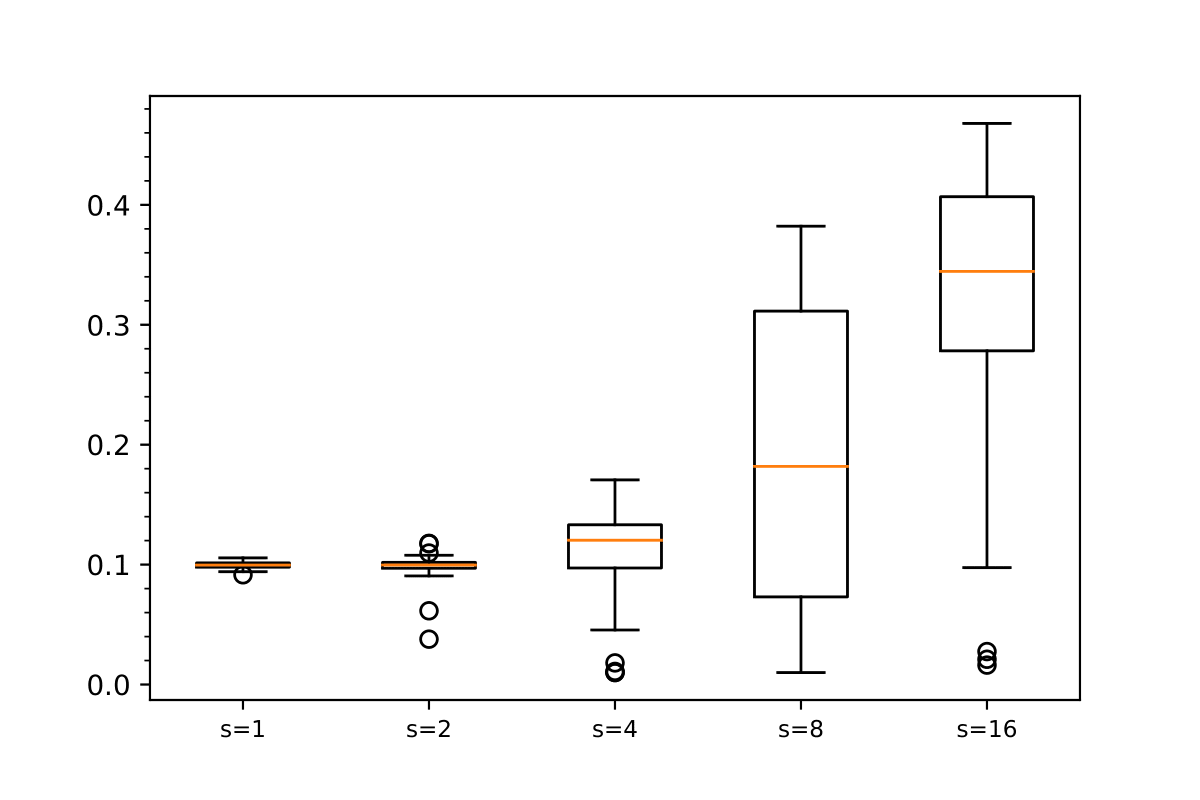}
        \caption{Confidence Intervals for $\theta_3$ (True value = 0.1)} \label{fig:box plots:model I: submodel 2: theta_2}
    \end{subfigure}

\caption{Box plots of Model I - Submodel 2 estimates} \label{fig:box plots:model I: submodel 2}
\end{figure}

\subsection{Model II: Estimated virtual waiting time based joining}

We now demonstrate the procedure working with incomplete information, as was presented in Section~\ref{section: joining decisions based on incomplete information}, in which the delay proxy at time $t$ can be any estimate of the waiting time, depending only on the information in the regeneration cycle time $t$ belongs to.
In this particular experiment we choose $\Delta(t) = (L(t)-s+1)^{+}\,{\mathbb E}B/s$, where $L(t)$ denotes the number of customers in the system at time~$t$. 
Our model of choice is an M$_t$/G/$s$+H system with arrival rate, service disribution and patience distribution given by
\begin{align*}
    \lambda_{\bs{\alpha}}(t) &= 50 + 20\sin(4-0.1t) + 10\sin(1-0.5t),
    \\
    G &\sim \text{Gamma}(0.1,4),
    \\
    H_{\bs{\theta}}(x) &= 1-\frac{1}{(1 + x)^2}.
\end{align*}
The paramaters to be estimated are therefore $\bs{\alpha}_0 = (50, 20, 10, 4, 1)$, and $\bs{\theta}_0 = (1,2)$. We use simulated data from 0.5$\times 10^6$ total arrivals (balking and non-balking).
Figure \ref{fig:box plots:model II: submodel 1} shows the box plots for the empirical confidence intervals of the estimates for $\bs\alpha$ and $\bs\theta$.
They follow a similar trend as in the one observed in previous examples.
\begin{figure}[ht]
    \begin{subfigure}{0.45\textwidth}
        \includegraphics[width=\linewidth]{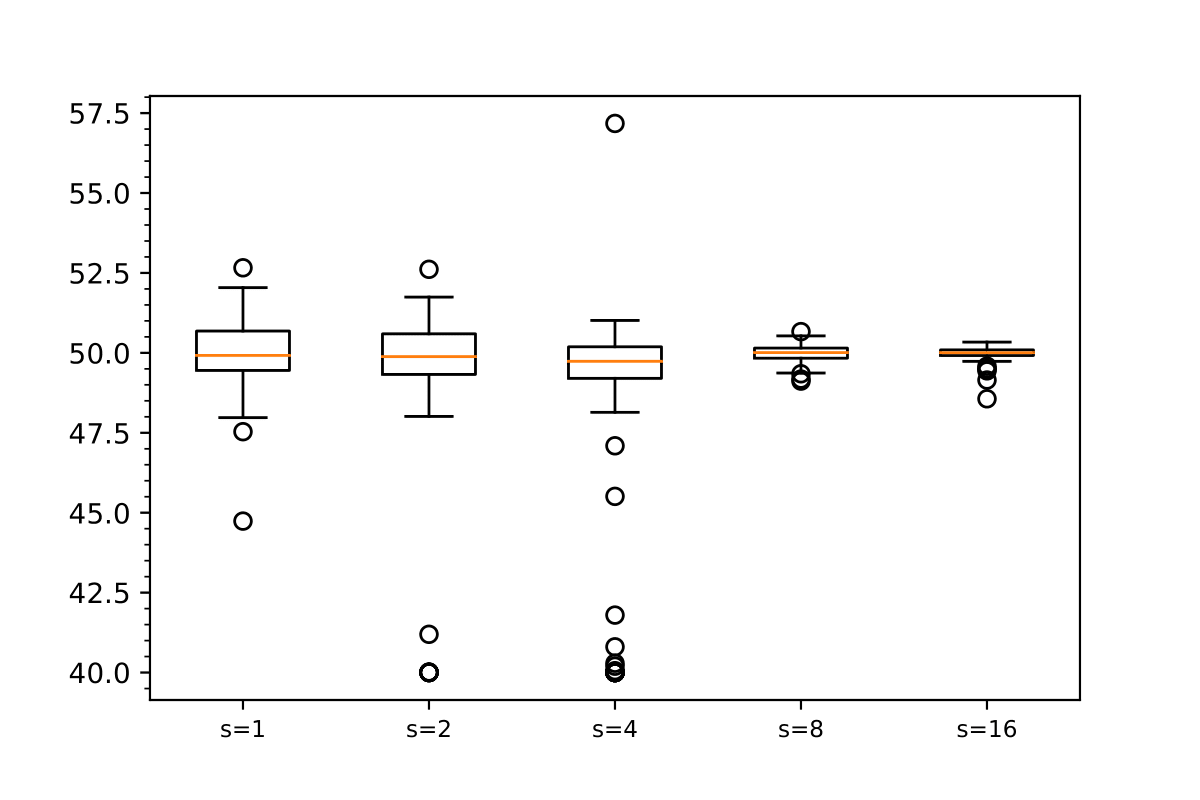}
        \caption{Confidence Intervals for $\alpha_1$ (True value = 50)} \label{fig:box plots:model II: submodel 1: a}
    \end{subfigure}\hspace*{\fill}
    \begin{subfigure}{0.45\textwidth}
        \includegraphics[width=\linewidth]{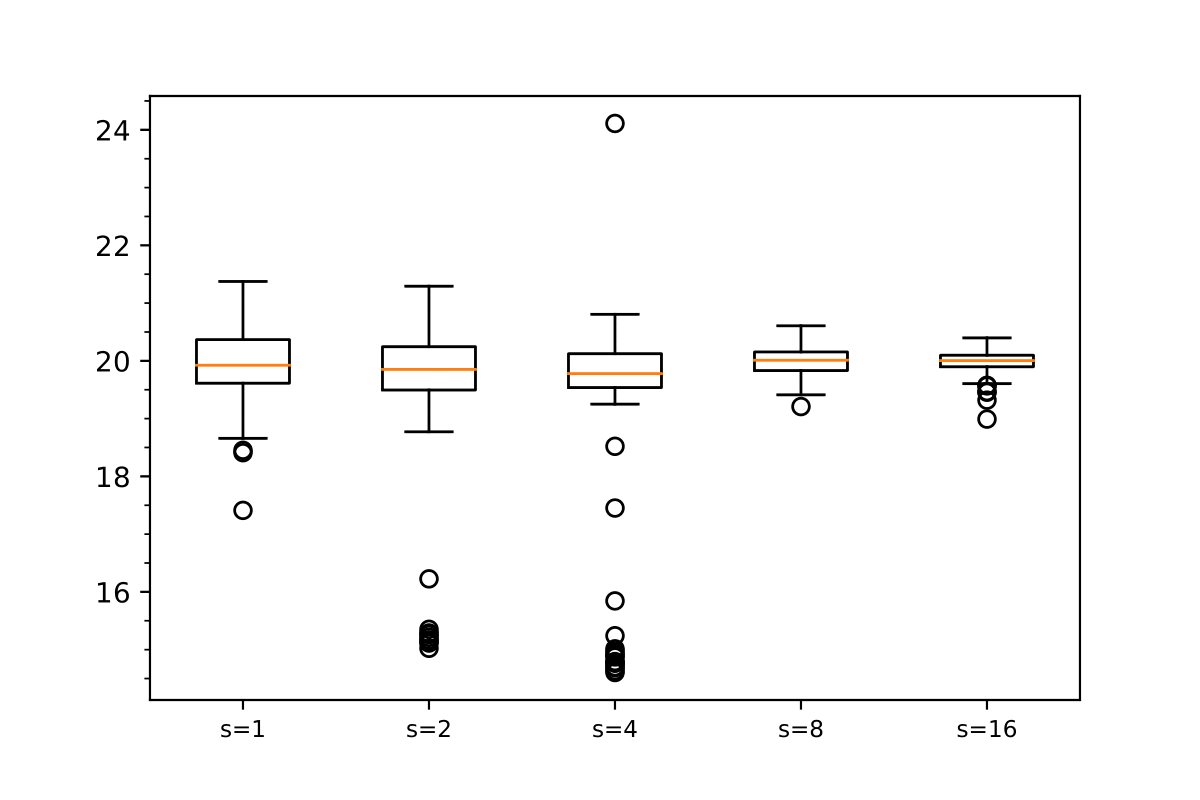}
        \caption{Confidence Intervals for $\alpha_2$ (True value = 20)} \label{fig:box plots:model II: submodel 1: b1}
    \end{subfigure}
    
    \medskip
    \begin{subfigure}{0.45\textwidth}
        \includegraphics[width=\linewidth]{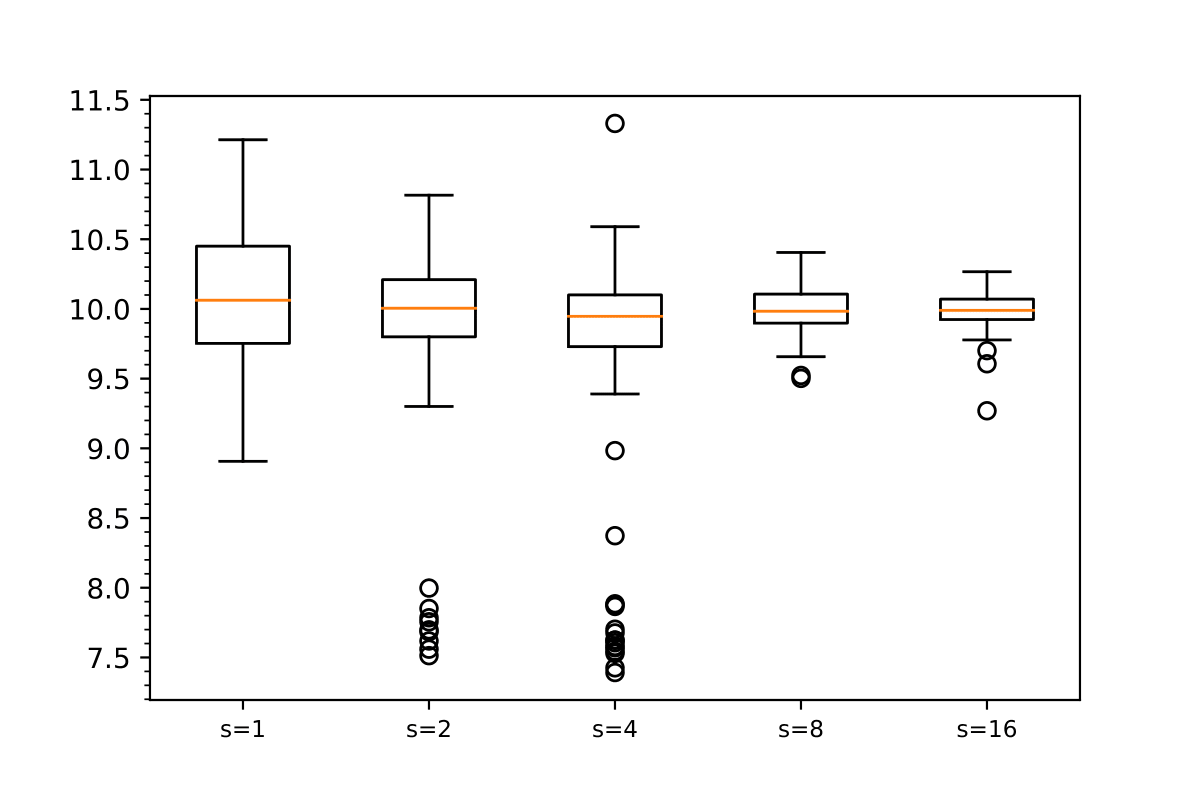}
        \caption{Confidence Intervals for $\alpha_3$ (True value = 10)} \label{fig:box plots:model II: submodel 1: b2}
    \end{subfigure}\hspace*{\fill}
    \begin{subfigure}{0.45\textwidth}
        \includegraphics[width=\linewidth]{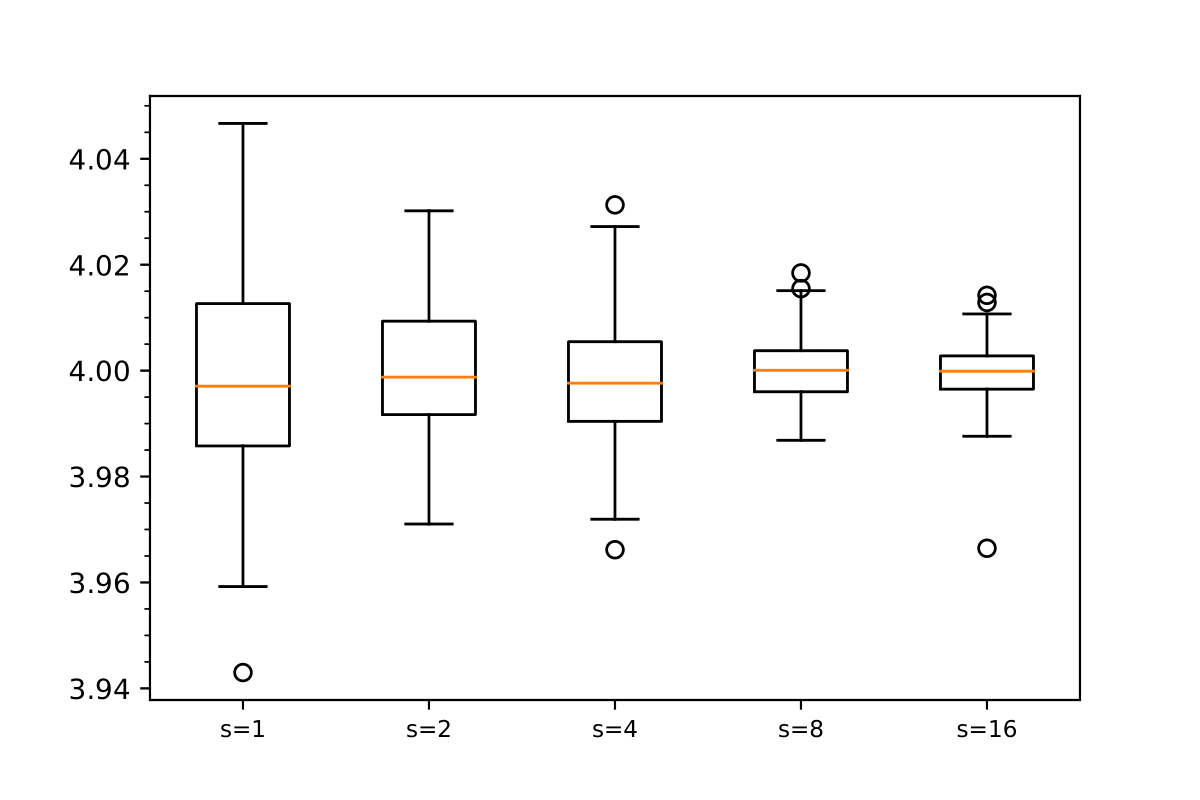}
        \caption{Confidence Intervals for $\alpha_4$ (True value = 4)} \label{fig:box plots:model II: submodel 1: c1}
    \end{subfigure}

    \medskip
    \begin{subfigure}{0.45\textwidth}
        \includegraphics[width=\linewidth]{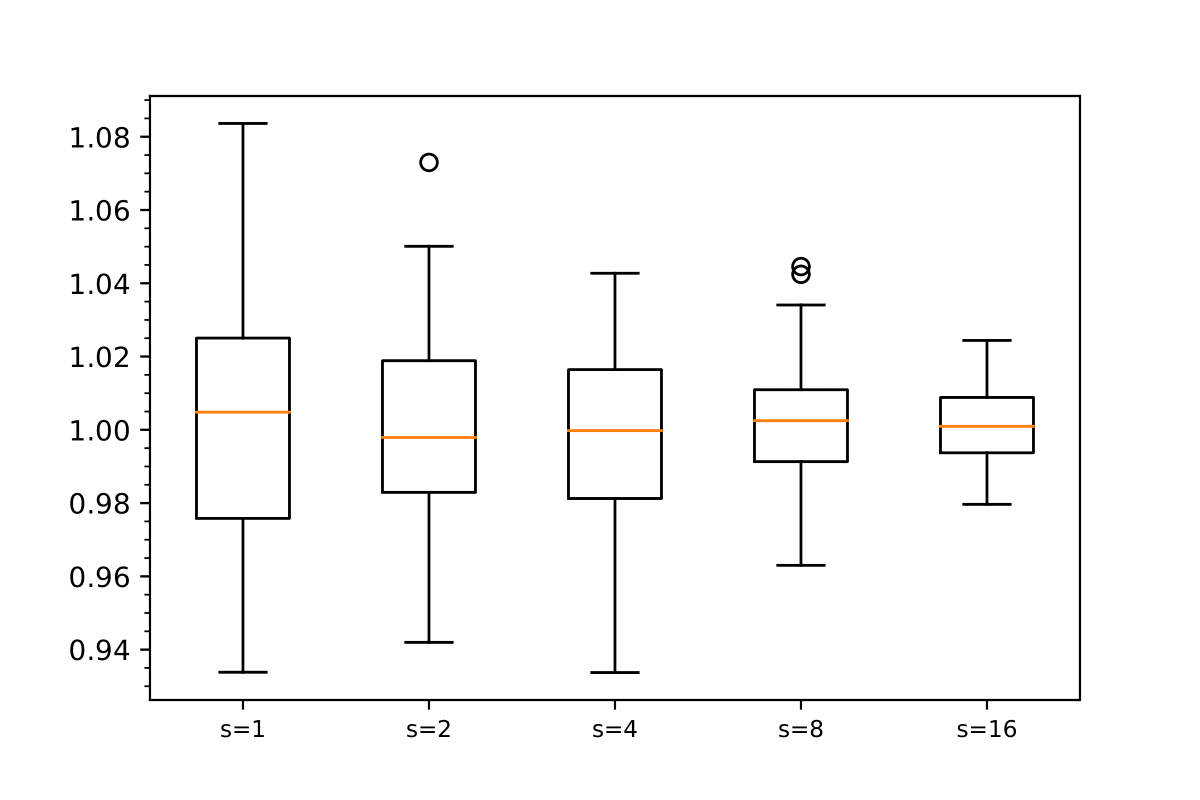}
        \caption{Confidence Intervals for $\alpha_5$ (True value = 1)} \label{fig:box plots:model II: submodel 1: c2}
    \end{subfigure}\hspace*{\fill}
    \begin{subfigure}{0.45\textwidth}
        \includegraphics[width=\linewidth]{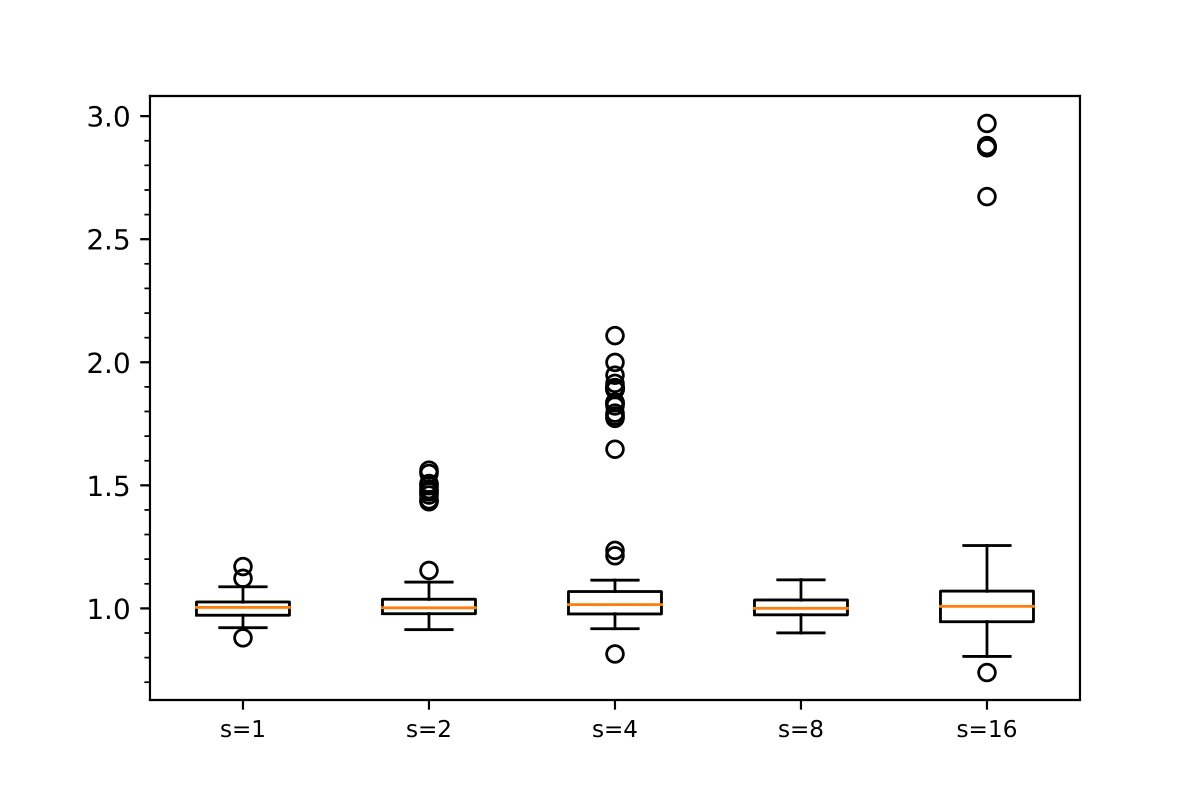}
        \caption{Confidence Intervals for $\theta_1$ (True value = 1)} \label{fig:box plots:model II: submodel 1: alpha}
    \end{subfigure}

    \medskip
    \begin{subfigure}{0.45\textwidth}
        \includegraphics[width=\linewidth]{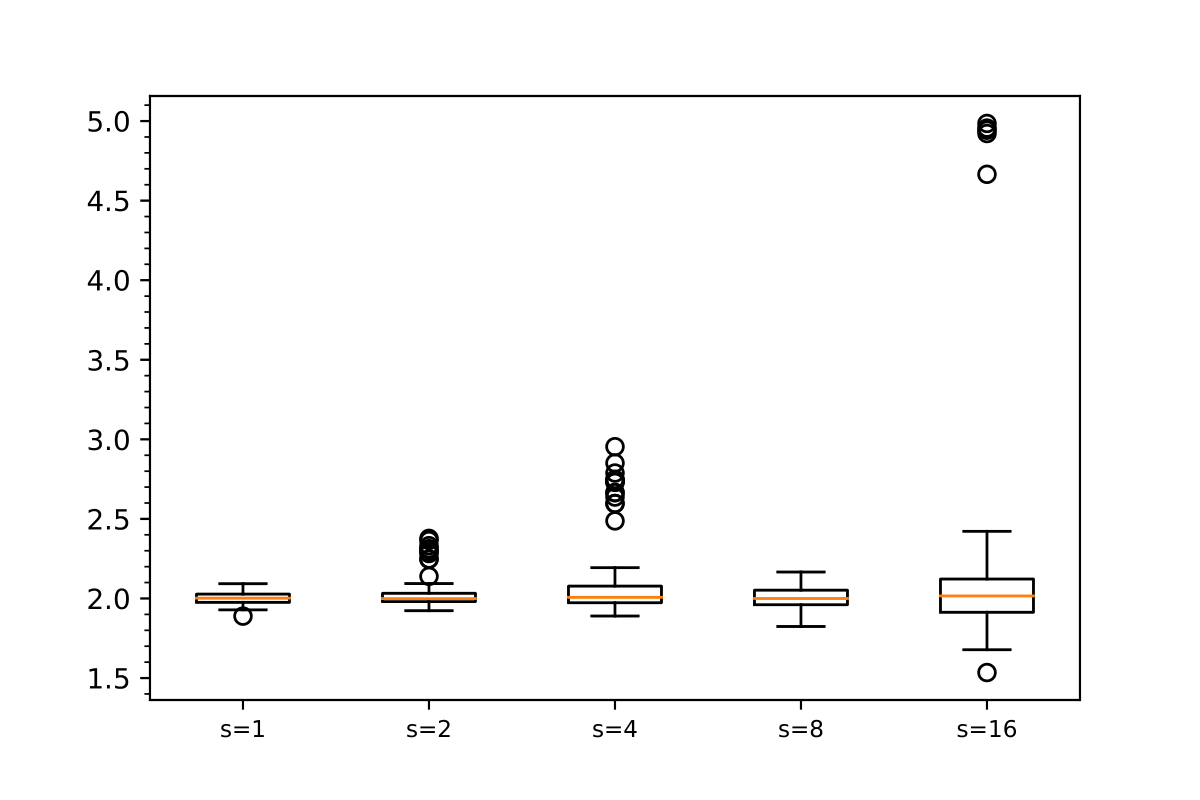}
        \caption{Confidence Intervals for $\theta_2$ (True value = 2)} \label{fig:box plots:model II: submodel 1: beta}
    \end{subfigure}\hspace*{\fill}

\caption{Box plots of Model II estimates} \label{fig:box plots:model II: submodel 1}
\end{figure}

\subsection{Model III: System size based joining}

In this model, an arriving customer only receives information about the total number of customers in the system at time $t$, i.e. $\Delta(t) = L(t)$. 
Or alternatively, an arriving customer receives information about the number of service completions that they must wait for, before their service begins. 
That is, $\Delta(t) = \big(L(t)-s + 1\big)^{+}$.
In particular, our model of interest is an M$_t$/G/$s$+H system with arrival rate, service distribution and patience distribution given by
\begin{align*}
    \lambda(\bs{\alpha}, t) &= 50 + 20\sin(4 - 0.1t) + 10\sin(1 - 0.5t),
    \\
    G &\sim \text{Gamma}(0.05,4),
    \\
    H_{\bs{\theta}} &\sim \text{Geo}(0.1).
\end{align*}
The paramaters to be estimated are therefore $\bs{\alpha}_0 = (50, 20, 10, 4, 1)$, and $\bs{\theta}_0 = 0.1$. In this example, we work with 40,000 total arrivals.
Box plots of empirical confidence intervals for the estimates of $\bs\alpha$ and $\bs\theta$ are shown in Figure \ref{fig:box plots:model III: submodel 1}.
Once again, we observe that they follow a similar trend as in the previous examples.

% REMOVE
\begin{figure}[ht]
    \begin{subfigure}{0.45\textwidth}
        \includegraphics[width=\linewidth]{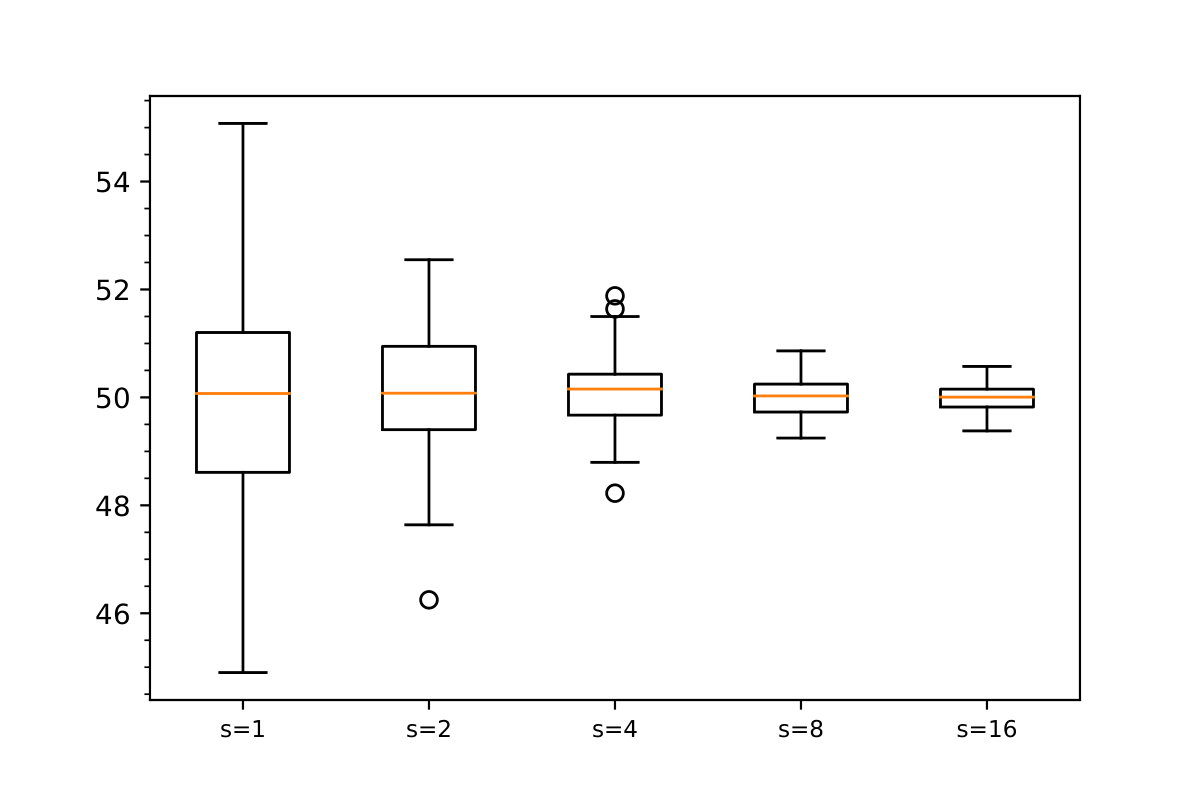}
        \caption{Estimates for $\alpha_1$ (True value = 50)} \label{fig:box plots:model III: submodel 1: a}
        \end{subfigure}\hspace*{\fill}
        \begin{subfigure}{0.45\textwidth}
        \includegraphics[width=\linewidth]{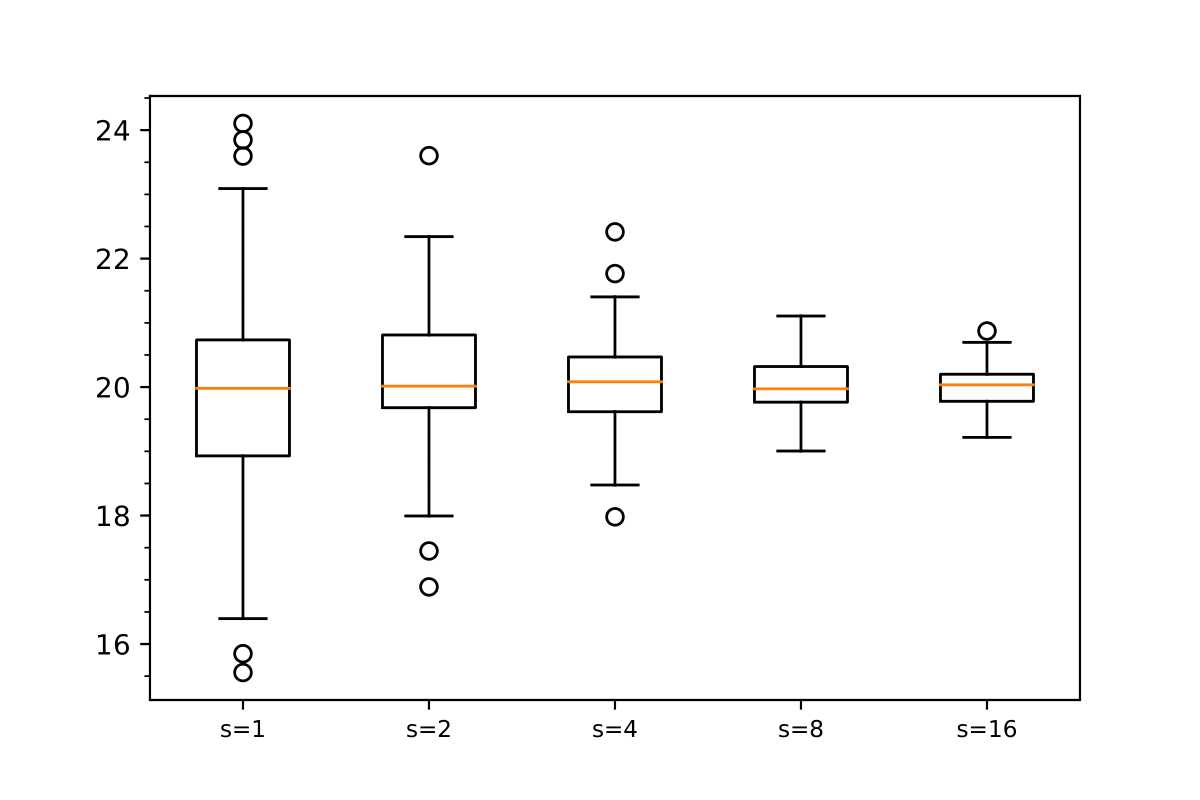}
        \caption{Estimates for $\alpha_2$ (True value = 20)} \label{fig:box plots:model III: submodel 1: b1}
    \end{subfigure}
    
    \medskip
    \begin{subfigure}{0.45\textwidth}
        \includegraphics[width=\linewidth]{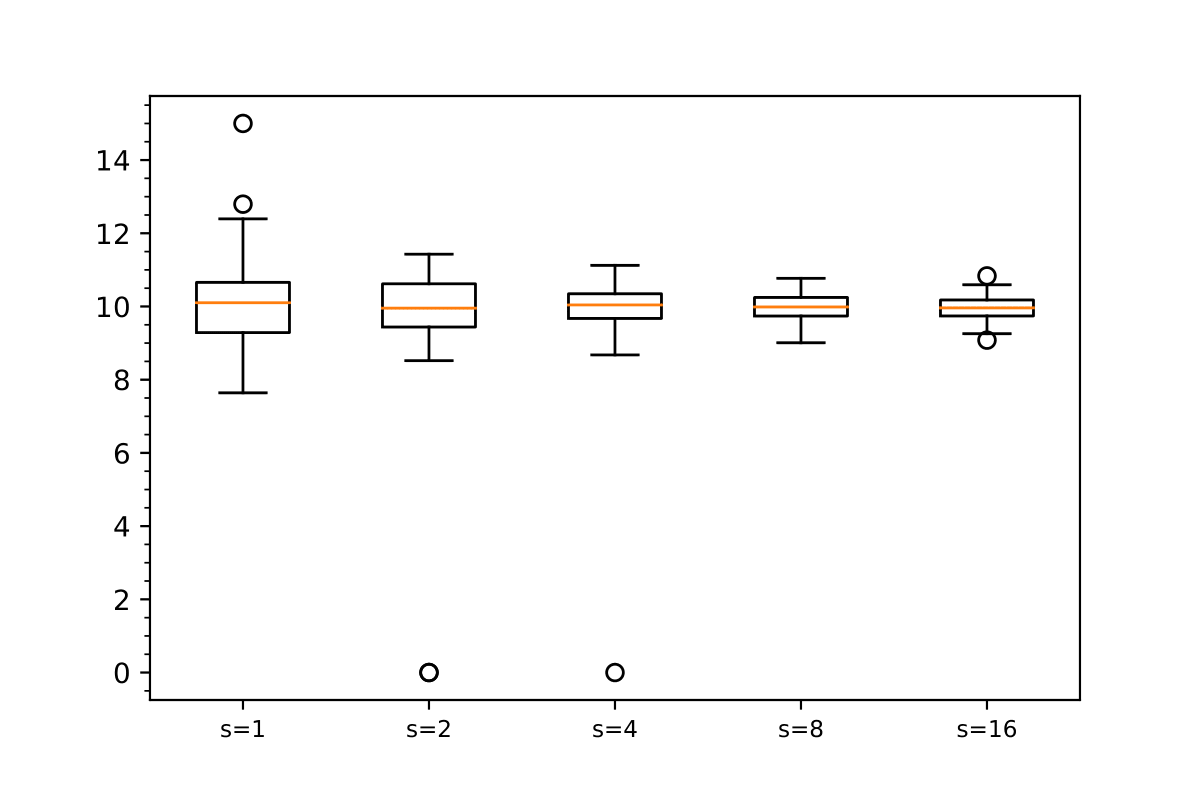}
        \caption{Estimates for $\alpha_3$ (True value = 10)} \label{fig:box plots:model III: submodel 1: b2}
        \end{subfigure}\hspace*{\fill}
        \begin{subfigure}{0.45\textwidth}
        \includegraphics[width=\linewidth]{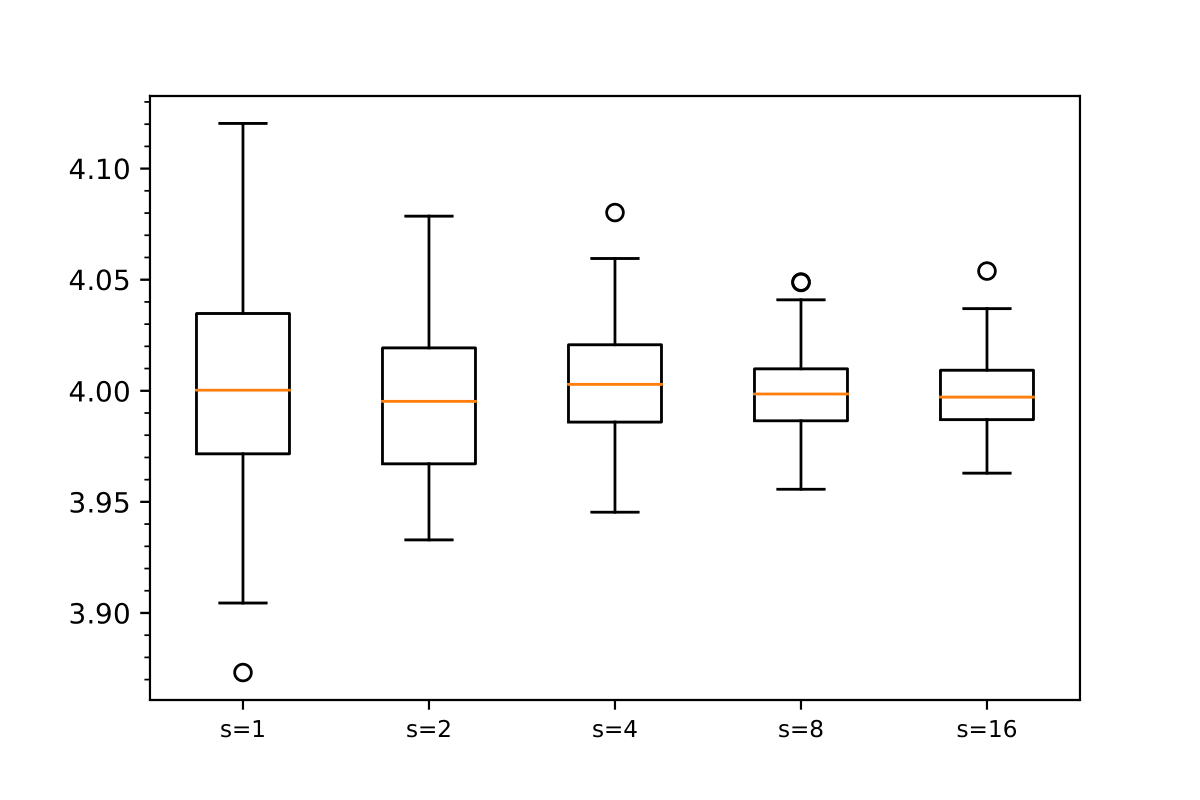}
        \caption{Estimates for $\alpha_4$ (True value = 4)} \label{fig:box plots:model III: submodel 1: c1}
    \end{subfigure}

    \medskip
    \begin{subfigure}{0.45\textwidth}
        \includegraphics[width=\linewidth]{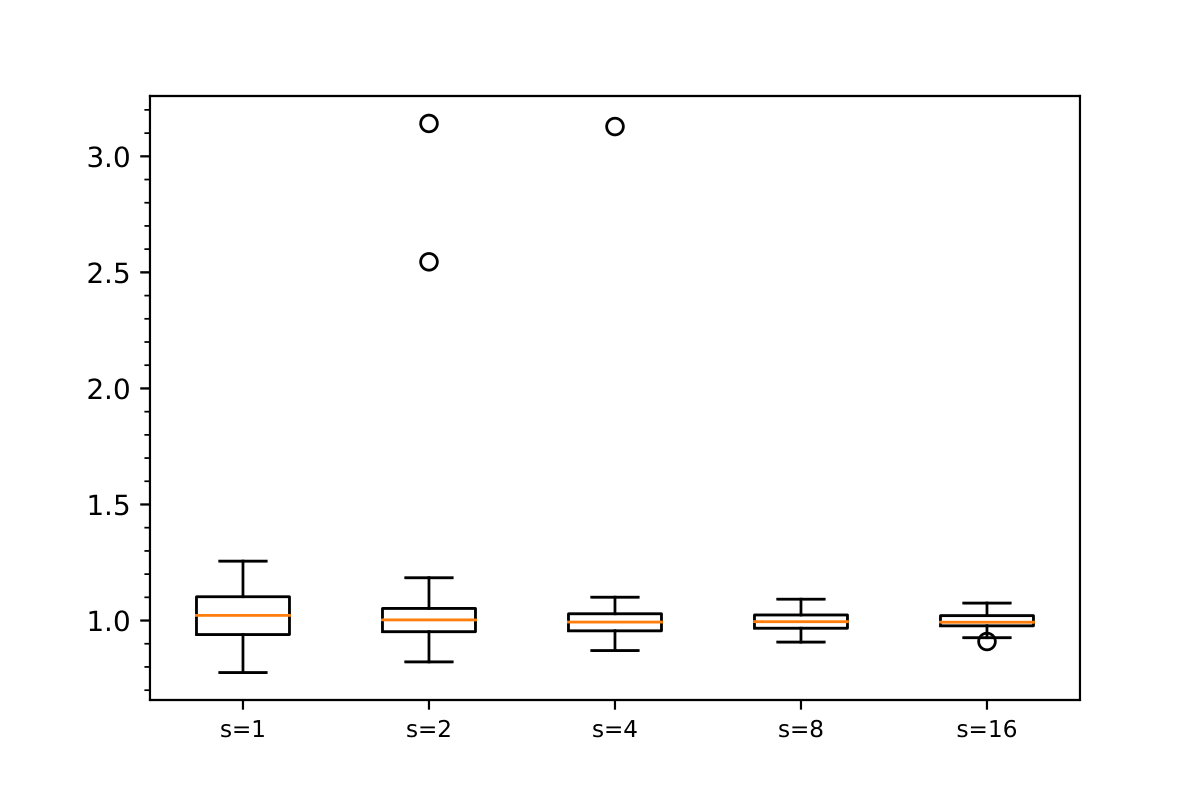}
        \caption{Estimates for $\alpha_5$ (True value = 1)} \label{fig:box plots:model III: submodel 1: c2}
        \end{subfigure}\hspace*{\fill}
        \begin{subfigure}{0.45\textwidth}
        \includegraphics[width=\linewidth]{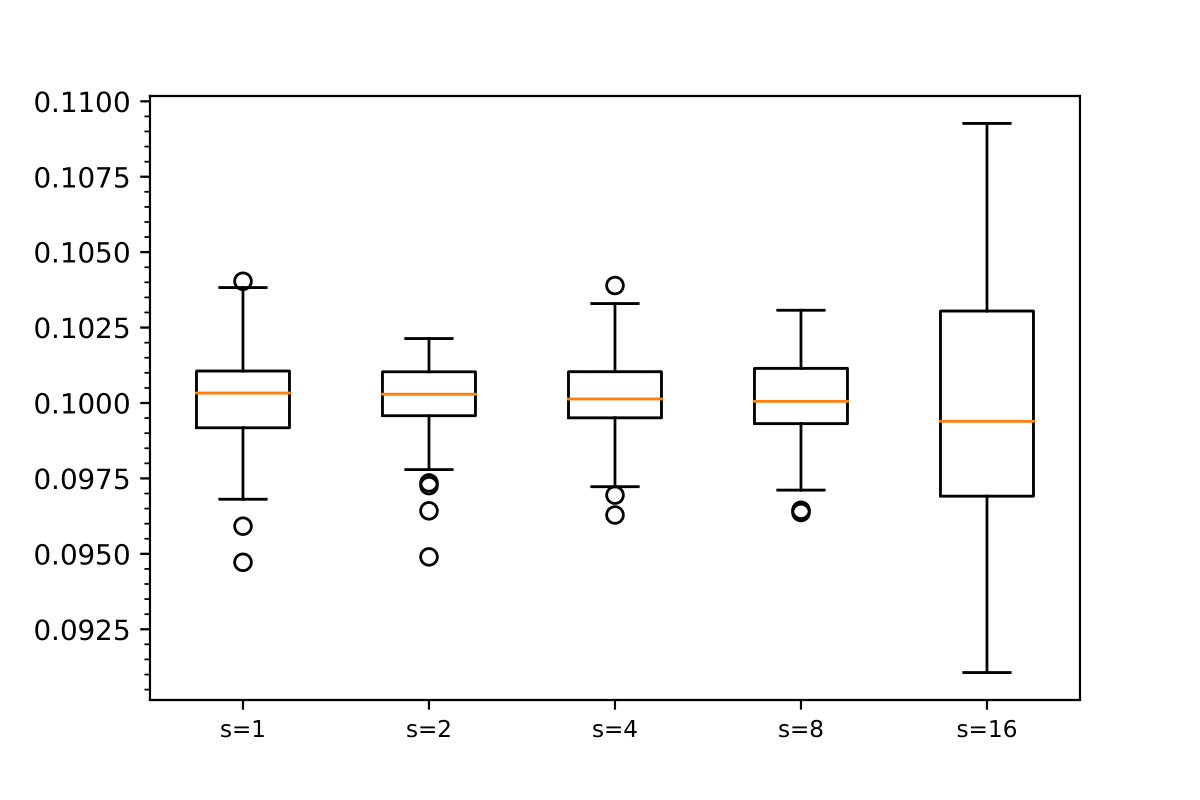}
        \caption{Estimates for $\theta$ (True value = 0.1)} \label{fig:box plots:model III: submodel 1: alpha}
    \end{subfigure}

\caption{Box plots of Model III estimates} \label{fig:box plots:model III: submodel 1}
\end{figure}

\section{CONCLUDING REMARKS}\label{section: conclusion}

In this paper, we considered a service system with a delay announcement mechanism.
Based on their patience level, customers decide to join the system or to balk.
As the underlying model we chose the versatile M$_t$/G/$s$+H queue, with a periodic time-dependent arrival rate $\lambda_{\bs{\alpha}_0}(t)$, and a patience distribution $H_{\bs{\theta}_0}(\cdot)$.
The goal of this paper was to estimate the parameter vectors $\bs{\alpha}_0$ and $\bs{\theta}_0$, even though the balking customers remain unobserved.
Through an MLE procedure, we devised estimators $\hat{\bs{\alpha}}_n$ and $\hat{\bs{\theta}}_n$ given an observed realization of the system corresponding to $n$ non-balking customers, which we proved to be consistent and asymptotically normal as $n\to\infty$.
Through numerical experiments, some of them working with delay proxies, we demonstrate the performance of the estimation procedure.
We also illustrate instances where estimation is easier and harder, with supporting explanations for the same.

%THIS REMARK IS RATHER SOMETHING YOU COULD MENTION AT A TALK:
%To summarize this work, we provide a short intuitive explanation as to how we can actually estimate the total arrival rate given that only a fraction of customers are observed. Whenever a long duration with no effective arrivals is observed, one can be certain of the existence of balking customers in this period. Moreover, a system operator knows the delay during this period $D$, and therefore information about the patience of the balking customers $Y \leq D$ is obtained.

Although the framework considered is rather general, several extensions could be thought of. 
An interesting generalization could be one in which the service times and patience levels are correlated, which could e.g.\ cover cases in which a customer with a large job size is naturally willing to wait longer than customers with smaller job sizes.

% \textcolor{red}{There are several modes of balking. One may consider a model where all arriving customers join the system, and abandon when they run out of patience. However in such a model, }

In the context of our paper the impatience mechanism can be seen as a decentralized control policy, in that it makes sure that the number of customers in the system does not become large. 
Thus, the material presented in this work is an example of an estimation problem based on partial information, in a system that operates under what could be called an `equalizing control'. 
In the introduction of our paper we came across instances in which (some of) the parameters were hard to identify, corresponding to situations in which the control succeeded in essentially equalizing the system occupancy, entailing that  relatively little information in our observations.
The complications arising when aiming to estimate parameters under `equalizing control measures' are discussed in detail in \cite{mendelson2023control}: the control tries to make the congestion level as constant as possible, whereas estimation would benefit from more fluctuations.

\bibliographystyle{plain}
\bibliography{bibliography.bib}

\newpage

\appendix

\begin{center}
    \underline{\textbf{\large{ONLINE APPENDIX}}}
\end{center}

\section{ANALYSIS OF M$_t$/G/$s$+H SYSTEM} \label{section: analysis of Mt/G/s+H system}

In Section \ref{section: Constructing i.i.d regeneration cycles}, we constructed a number of random variables pertaining to a regeneration cycle of an M$_t$/G/$s$+H system and then made some intuitive claims about them in Theorem \ref{theorem:properties_of_system}. 
In this appendix we prove this theorem.

\begin{proof}[Proof of Theorem \ref{theorem:properties_of_system}(a)]
    Let $\{\bs{X_n}\}_{n \geq 0}$ denote the vector of residual service times at times $0,1,2,\cdots$, i.e., $\bs{X_n} = \big(X_{n,1}, X_{n,2}, \cdots, X_{n,s} \big) \in \mathbb{R}_{+}^s$ where $X_{n,i}$ denotes the residual service time corresponding to server $i$ at time $n$. 
    Then, $\{\bs{X_n}\}_{n \geq 0}$ is a discrete-time Markov chain. Also define $\{\bs{L_{n}}\}_{n \geq 1}, \{\bs{T_{n}}\}_{n \geq 1} \in \mathbb{R}^s$ as follows: $\bs{L_n} = \big(L_{n,1}, L_{n,2}, \cdots, L_{n,s} \big)$ where $L_{n,i}$ denotes the  amount of workload server $i$ receives during time $(n-1, n]$, and $\bs{T_n} = \big(T_{n,1}, T_{n,2}, \cdots, T_{n,s} \big)$ where $T_{n,i}$ denotes the amount of service completed by server $i$ during time $(n-1, n]$. 
    If a server $i$ is never idle during $(n-1,n]$, then $T_{n,i} = 1$, otherwise $T_{n,i} < 1$.
    
    By Assumption (A4), there exists $1 > \delta > 0$ and $p \geq 2$ such that
    \begin{align*}
        \lambda_{\max}\exptn(B)\tilde{H}_{\bs{\theta}}(p-1) \leq 1-\delta.
    \end{align*}
    We define the Lyapunov function $\phi: \mathbb{R}^s_{+} \mapsto [1,\infty)$ by
    \begin{align}    
        \phi(\bs{x}) = 1 + \frac{1}{\delta} \max_{i = 1,2,\cdots, s} {x_i}. \label{equation:Lyapunov_function}
    \end{align}
    Let $\mathcal{C}$ denote the set $[0, k]^s$ for $k \gg p$, the exact lower bound for $k$ being evaluated later. 
    Suppose $\bs{X_n} \in \mathcal{C}$. 
    Then we show that $\exptn\big[\phi\big(\bs{X_{n+1}}\big) | \bs{X_n} \big]$ is bounded above by some fixed quantity, as follows:
    \begin{align}
        \exptn\big[\phi\big(\bs{X_{n+1}}\big) \ \big\vert \bs{X_n} \big] &= \exptn\bigg[1 + \frac{1}{\delta} \max_{i=1,2,\cdots,s} \big(X_{n,i} + L_{n+1,i} - T_{n+1,i}\big) \ \Big\vert \bs{X_n} \bigg] \notag
        \\
        &\leq 1 + \frac{1}{\delta} \max_{i=1,2,\cdots,s} X_{n,i} + \frac{1}{\delta}\exptn\Bigg[\sum_{i=1}^{s} L_{n+1,i} \ \Big\vert \bs{X_n} \Bigg] \notag
        \\
        &\leq 1 + \frac{k}{\delta} + \frac{\lambda_{\max}\exptn(B)}{\delta}.
    \end{align}
    Suppose now that $\bs{X_n} \in \mathbb{R}^{s} \setminus \mathcal{C}$. 
    Then, we consider two distinct cases as follows.
    
    \underline{\textbf{Case I}}: $\min_{i=1,2,\cdots,s} X_{n,i} \geq p$

    In this case,
    \begin{align}
        \exptn\big[\phi\big(\bs{X_{n+1}}\big) - \phi\big(\bs{X_n}) \ \big\vert \bs{X_n} \big] &= \ \exptn\bigg[\cancel{1} + \frac{1}{\delta} \max_{i=1,2,\cdots,s} \big(X_{n,i}+L_{n+1,i}-1\big) - \cancel{1} - \frac{1}{\delta} \max_{i=1,2,\cdots,s} X_{n,i} \ \Big\vert \bs{X_n} \bigg] \notag
        \\
        &\leq \ \frac{1}{\delta}\exptn\bigg[\cancel{\max_{i=1,2,\cdots,s} X_{n,i}} + \max_{i=1,2,\cdots,s} L_{n+1,i} -1 -\cancel{\max_{i=1,2,\cdots,s} X_{n,i}} \ \Big\vert \bs{X_n} \bigg] \notag
        \\
        &\leq \ \frac{1}{\delta} \bigg(-1 + \exptn \bigg[\sum_{i=1}^{s} L_{n+1,i} \ \Big\vert \bs{X_n} \bigg]\bigg) \notag
        \\
        &\leq \ \frac{1}{\delta} \Big(-1 + \lambda_{\max} \exptn(B) \tilde{H}_{\theta}(p-1)\Big) \notag
        \\
        &\leq \ \frac{1}{\delta} \big(-1 + 1-\delta \big) = -1.
    \end{align}
    
    \underline{\textbf{Case II}}: $ \min_{i=1,2,\cdots, s} X_{n,i} < p$
    
    Denote the total incoming workload during the interval $(n, n+1]$ by $W_{(n,n+1]}$. 
    Then,
    \begin{align}
        \exptn\big[\phi&(\bs{X_{n+1}}) - \phi(\bs{X_n}) | \bs{X_n} \big] \notag
        \\
        = \ & -\frac{1}{\delta} \prob\Big(W_{(n,n+1]} \leq k-p-1 \Big) + \exptn\Big[\big(\phi(\bs{X_{n+1}}) - \phi(\bs{X_n})\big)\indset\Big(W_{(n,n+1]} > k-p-1 \Big) \ \big\vert \bs{X_n}\Big]. \label{equation:finite_mean_length:V(X_{n+1})-V(X_n):total}
    \end{align}
    Explanation for the first term of \eqref{equation:finite_mean_length:V(X_{n+1})-V(X_n):total}: During the time interval $(n,n+1]$, the difference between the workloads of maximum and minimum workload servers is at least $k-p-1$.
    Therefore, if $W_{(n,n+1]} < k-p-1$, then it will always be assigned to some server which is not the maximum workload server. 
    In this case, $\phi(\bs{X_{n+1}}) - \phi(\bs{X_n}) = -1/\delta$.
    The second term of \eqref{equation:finite_mean_length:V(X_{n+1})-V(X_n):total} is majorized by
    \begin{align}
        \exptn\Big[\big(\phi&(\bs{X_{n+1}}) - \phi(\bs{X_n})\big)\indset\Big(W_{(n,n+1]} > k-p-1 \Big) \ \big\vert \bs{X_n}\Big] \notag
        \\
        \leq \ &\sum_{j=1}^{\infty} \frac{j}{\delta} \ \prob\Big(W_{(n,n+1]} \in \big(k-p+j-2, \ k-p+j-1\big) \ \big\vert \bs{X_n}\Big) \notag
        \\
        \leq \ &\frac{1}{\delta} \int_{k-p-1}^{\infty} (x-k+p+2) \ f_{W_{(n,n+1]} \vert \bs{X_n}}(x) \ \mathrm{d}x \notag
        \\
        \leq \ &\frac{1}{\delta} \int_{k-p-1}^{\infty} x \ f_{W_{(n,n+1]} \vert \bs{X_n}}(x) \ \mathrm{d}x \longrightarrow 0 \notag
    \end{align}
    as $k \rightarrow \infty$, because $\exptn\big[W_{(n,n+1]} \ \vert \bs{X_n}\big] \leq \lambda_{\max} \exptn[B] < \infty$.
    Therefore, we obtain
    \begin{align*}
        \lim_{k \rightarrow \infty} \exptn\Big[\big(\phi(\bs{X_{n+1}}) - \phi(\bs{X_n})\big)\indset\Big(W_{(n,n+1]} > k-p-1 \Big) \Big] \leq 0.
    \end{align*}
    Since $\prob\big(W_{(n,n+1]} \leq k-p-1 \big) \rightarrow 1$ as $k \rightarrow \infty$, we can conclude that
    \begin{align*}
        -\frac{1}{\delta} \prob\Big(W_{(n,n+1]} \leq k-p-1 \Big) + \exptn\Big[\big(\phi(\bs{X_{n+1}}) - \phi(\bs{X_n})\big)\indset\Big(W_{(n,n+1]} > k-p-1 \Big) \Big] \rightarrow -\frac{1}{\delta}
    \end{align*}
    as $k \rightarrow \infty$. Since $-1/\delta < -1$, there must exist $\widetilde{k} \in \mathbb{R}$ such that the above expression is less than $-1$. 
    Choosing $\mathcal{C} = \big[0, \widetilde{k}\big]^s$, we have verified that for all $\bs{X_n} \in \mathbb{R}^s \setminus \mathcal{C}$,
    \begin{align*}
        \exptn\big[\phi\big(\bs{X_{n+1}}\big) - \phi\big(\bs{X_n}\big) \ \big\vert \bs{X_n}] \leq -1.
    \end{align*}
    Therefore, by \cite[Proposition 6.11(a)]{benaim2022markov}, $\mathcal{C}$ is a recurrent set.
    Now, let $\mathcal{U} = \{(0,0,\cdots,0) \}$. Define the induced chain $\{\bs{Y_n}\}_{n \geq 1}$ as follows:
    \begin{align*}
        \bs{Y_n} = \bs{X_{\tau_{\mathcal{C}}^{(n)}}}.
    \end{align*}
    Choose $N = \widetilde{k}+1$, and $\bs{Y_0} = \bs{X_0} = (0,0,\cdots,0)$.
    $\bs{Y_1} \in \mathcal{C}$ implies that $\max_{i=1,2,\cdots,s} Y_{1,i} \leq \widetilde{k}$. 
    Under the scenario that no customers arrive for the next $\widetilde{k}$ units of time, every server's workload continues dropping at unit rate and therefore, there exists $i \in \{2, 3, \cdots, \widetilde{k}+1\}$ such that $\bs{Y_i} = (0,0,\cdots,0)$. 
    The probability of this event is
    \begin{align*}
        \prob\Big(\text{No arrivals during} \ \big(\tau_{\mathcal{C}}^{(1)}, \tau_{\mathcal{C}}^{(1)}+ \widetilde{k} \big) \Big) \geq e^{-\lambda_{\max}\widetilde{k}}.
    \end{align*}
    By \cite[Proposition 6.15(i)]{benaim2022markov}, we get that $\mathcal{U}$ is recurrent. 
    Furthermore, by \cite[Proposition 6.15(ii)]{benaim2022markov}, $\sup_{x \in \mathcal{C}} \exptn_{x}\big[\tau_{\mathcal{U}}\big] < \infty$. 
    Therefore, the expected return time to $(0,0,\cdots,0)$ starting at $(0,0,\cdots, 0)$ itself, is finite, i.e. $\exptn(R_1) < \infty$.
\end{proof}
% So far, we have shown that $\exptn(R_1) < \infty$. Using this result, it is now our goal to prove that $\exptn(C_1) < \infty$

\begin{proof}[Proof of Theorem \ref{theorem:properties_of_system}(b)] \
    The maximum amount of service that can be completed in regeneration cycle $1$ is bounded from above by $sR_1$, which corresponds to the scenario that each server works for the entire duration of the regeneration cycle. 
    On the other hand, the desired amount of service is given by $B_{1,1} + B_{1,2} + \cdots + B_{1,C_1}$. 
    This implies that $sR_1 \geq B_{1,1} + B_{1,2} + \cdots + B_{1,C_1}$. 
    Taking expectation on both sides and making use of the previous result, we conclude that
    \begin{align}
    \exptn\Bigg[\sum_{i=1}^{C_1} B_{1,i}\Bigg] < s\exptn\big[R_1\big] < \infty.
    \end{align}
    For $k \geq 1$, define $S_k := B_{1,1} + B_{1,2} + \cdots + B_{1,k}$. 
    Then,
    \begin{align*}
        \underbrace{\sum_{i=1}^{C_1} B_{1,i}}_{Z} = \sum_{k=1}^{\infty} S_k \indset\big\{C_1 = k\big\} = \lim_{K \rightarrow \infty} \ \underbrace{\sum_{k=1}^{K} S_k\indset\big\{C_1 = k\big\}}_{Z_K}.
    \end{align*}
    $0 \leq Z_1 \leq Z_2 \leq \cdots \leq Z$ and $\lim_{K \rightarrow \infty} Z_K = Z$. 
    Therefore, by the monotone convergence theorem,
    \begin{align*}
        \exptn&\Bigg[\sum_{i=1}^{C_1} B_{1,i}\Bigg] = \lim_{K \rightarrow \infty} \exptn \Bigg[\sum_{k=1}^{K} S_k \indset\big\{C_1 = k\big\}\Bigg] = \lim_{K \rightarrow \infty} \sum_{k=1}^{K} \exptn\Big[S_k \indset\big\{C_1 = k\big\} \Big] = \sum_{k=1}^{\infty} \exptn\Big[S_k \indset\big\{C_1 = k\big\}\Big]
        \\
        &= \sum_{k=1}^{\infty} \exptn\Bigg[\sum_{i=1}^{k} B_{1,i} \indset\big\{C_1 = k\big\} \Bigg] = \sum_{k=1}^{\infty} \sum_{i=1}^{k} \exptn\Big[B_{1,i}\indset\big\{C_1 = k\big\}\Big] \overset{\text{(i)}}{=} \sum_{i=1}^{\infty} \sum_{k=i}^{\infty} \exptn\Big[B_{1,i}\indset\big\{C_1 = k\big\} \Big]
        \\
        &= \sum_{i=1}^{\infty} \lim_{K \rightarrow \infty} \sum_{k=i}^{i+K} \exptn\Big[B_{1,i}\indset\big\{C_1 = k\big\} \Big] = \sum_{i=1}^{\infty} \lim_{K \rightarrow \infty} \exptn\Bigg[\sum_{k=i}^{i+K} B_{1,i}\indset\big\{C_1 = k\big\} \Bigg] \\&= \sum_{i=1}^{\infty} \lim_{K \rightarrow \infty} \exptn\Big[B_{1,i}\indset\big\{i \leq C_1 \leq i+K\big\}\Big]
        \\
        &\overset{\text{(ii)}}{=} \sum_{i=1}^{\infty} \exptn\bigg[\lim_{K \rightarrow \infty} B_{1,i}\indset\big\{i \leq C_1 \leq i+K\big\} \bigg] = \sum_{i=1}^{\infty} \exptn\Big[B_{1,i}\indset\big\{C_1 \geq i\big\} \Big]\\&  \overset{\text{(iii)}}{=} \sum_{i=1}^{\infty} \exptn\big[B_{1,i}\big] \prob\big(C_1 \geq i\big) = \exptn\big[B_{1,1}]\exptn\big[C_1\big].
    \end{align*}
    The equalities in the above display are justified as follows:
    (i) is allowed since all summands are positive and the summation is finite;
    (ii) again follows because of monotone convergence theorem;
    (iii) holds because the event $\big\{C_1 < i\big\}$ is independent of the random variable $B_{1,i}$. 
    Note that $\big\{C_1 < i\big\}$ depends on the arrival times, job sizes and patience levels pertaining to all balking and non-balking customers arriving until the $(i-1)$-st non-balking customer enters the system. 
    It also depends on the arrival times and patience levels of subsequent customers, but not on their job sizes. 
    In conclusion, we get
    \begin{align*}
        \exptn\Bigg[\sum_{i=1}^{C_1} B_{j,i}\Bigg] = \exptn\big[B_{1,1}\big] \exptn\big[C_1\big] < \infty,
    \end{align*}
    so that $ \exptn\big[C_1\big] < \infty$.
\end{proof}

\begin{proof}[Proof of Theorem \ref{theorem:properties_of_system}(c)]
    Let $\{t_n: n \geq 0\}$, $\{a_n: n \geq 0\}$ be defined as follows: $t_0 = 0$, and for $k \geq 0$,
    \begin{align*}
        a_k &= \text{Number of effective arrivals during } (0, t_k],
        \\
        t_{k+1} &= \min\Big\{\big\lceil t_k+ \big\rceil, \tilde{A}_{a_k+1} \Big\}. 
    \end{align*}
    Consider the process $\big\{Y_n: n \geq 0\big\}$ given by $Y_n = \big\{t_n, \bs{X(t_n)}\big\}$ where $\bs{X(t_n)}$ is the vector of residual service times at time $t_n$, i.e., $\bs{X}(t_n) \in \mathbb{R}^s$ stores the amount of residual workload corresponding to each of the $s$ servers. The evolution of this process can be described as a join-the-shortest workload system (which for the purposes here is identical to our system).
    % \footnote{\textcolor{blue}{LR: I think this requires some clarification. If I understand correctly, The process $\bs{X}(t)$ is different from $\mathcal{R}(t)$ because it always has $s$ coordinates and does not capture the number in the system. This state of this process can be described as a join the shortest workload system (which for the purposes here is identical to our system).}}
    Intuitively, the process $\big\{Y_n: n \geq 0\big\}$ captures the pending workload for each server after every effective arrival and at integral time points.
    Define
    \begin{align*}
        \tau = \inf\Big\{k \in \mathbb{N} \:\big\vert \ t_k \in \mathbb{N} \  \text{and} \ \bs{X(t_k)} = \bs{0}_{s \times 1} \Big\}.
    \end{align*}
    Then, the process $\{Y_n: n \geq 0\}$ regenerates at time $\tau$. Define a mapping 
    \begin{align*}
        f(\bs{Y_j}) = \min_{i=1,2,\cdots,s} X_i(t_j). 
    \end{align*}
    By the renewal-reward theorem, we get
    \begin{align} \label{equation:renewal_reward}
        \lim_{n \rightarrow \infty} \ \frac{1}{n} \sum_{j=1}^{n} f(\bs{Y_j}) = \frac{1}{\exptn\tau} \exptn\Bigg[\sum_{j=0}^{\tau-1} f(\bs{Y_j}) \Bigg].
    \end{align}
    By the ergodic theorem, the LHS of \eqref{equation:renewal_reward} is finite almost surely. Furthermore, $\exptn[\tau] < \infty$. Therefore,
    \begin{align*}
        \exptn\Bigg[\sum_{j=0}^{\tau-1} f(\bs{Y_j})\Bigg] < \infty.
    \end{align*}
    The above quantity represents the expected sum of virtual waiting times just after arrival instants as well as at integral time points, which is greater than the sum of virtual waiting times just before effective arrival instants.
    This implies that
    \begin{align*}
        \exptn\Bigg[\sum_{i=1}^{C_1} W_{1,i} \Bigg] \le \exptn\Bigg[\sum_{j=0}^{\tau-1} f(\bs{Y_j})\Bigg] < \infty.
    \end{align*} 
    Hence we conclude that the expected sum of waiting times of customers in a regeneration cycle is finite.
\end{proof}

\section{PROOFS OF AUXILIARY RESULTS} \label{section: appendix}
\begin{proof}[Proof for Lemma \ref{D_Andrews_lemma_1} in Section \ref{section: strong consistency}] By the triangle inequality, we can write 
\begin{align}
    &\Big\vert q\big(\bs{Z_j}, \bs{\mu'}\big) - q\big(\bs{Z_j}, \bs{\mu}\big) \Big\vert \leq \sum_{i=1}^{C_j} \underbrace{\Big\vert  \log\lambda_{\bs{\alpha}'}\big(\tilde{A}_{j,i}\big) - \log\lambda_{\bs{\alpha}}\big(\tilde{A}_{j,i}\big) \Big\vert}_{\text{(I)}} + \int_{0}^{A_{j,1}} \underbrace{\Big\vert  \lambda_{\bs{\alpha}'}(u) - \lambda_{\bs{\alpha}}(u) \Big\vert}_{\text{(II)}} \, \mathrm{d}u \ + \notag
    \\
    &\sum_{i=2}^{C_j} \underbrace{\Big\vert \log\tilde{H}_{\bs{\theta'}}\big(W_{j,i-1}+X_{j,i-1}-A_{j,i} \big) - \log\tilde{H}_{\bs{\theta}}\big(W_{j,i-1}+X_{j,i-1}-A_{j,i} \big) \Big\vert}_{\text{(III)}} \ + \notag
    \\
    &\sum_{i=2}^{C_j} \int_{0}^{A_{j,i}} \underbrace{\Big\vert \lambda_{\bs{\alpha}'}\big(u+\tilde{A}_{j,i-1}\big)\tilde{H}_{\bs{\theta'}}\big(W_{j,i-1}+X_{j,i-1}-u\big) - \lambda_{\bs{\alpha}}\big(u+\tilde{A}_{j,i-1}\big)\tilde{H}_{\bs{\theta}}\big(W_{j,i-1}+X_{j,i-1}-u\big)\Big\vert}_{\text{(IV)}} \, \mathrm{d}u \, + \notag
    \\
    &\int_{0}^{R_j - \tilde{A}_{j, C_j}} \underbrace{\Big\vert \lambda_{\bs{\alpha}'}\big(u+\tilde{A}_{j,C_j}\big)\tilde{H}_{\bs{\theta'}}\big(W_{j,C_j}+X_{j,C_j}-u\big) - \lambda_{\bs{\alpha}}\big(u+\tilde{A}_{j,C_j}\big)\tilde{H}_{\bs{\theta}}\big(W_{j,C_j}+X_{j,C_j}-u\big) \Big\vert}_{\text{(V)}} \, \mathrm{d}u.
\end{align}

Let us analyze (I), (II), (III), (IV), (V) separately. 
Upper bounds for (I) and (II) follow from Assumption (A3) and its corresponding discussion, the upper bound for
(III) follows due to (A6), and upper bounds for (IV) and (V) follow due to (A2), (A3) and (A5). Concretely,
\begin{align*}
    \textbf{(I):} \ &\Big\vert\log\lambda_{\bs{\alpha}'}\big(\tilde{A}_{j,i}\big) - \log\lambda_{\bs{\alpha}}\big(\tilde{A}_{j,i}\big) \Big\vert \leq \frac{\kappa}{\lambda_{\min}} \ d \big(\bs{\alpha}', \bs{\alpha}\big).
    \\
    \textbf{(II):} \ &\Big\vert \lambda_{\bs{\alpha}'}(u) - \lambda_{\bs{\alpha}}(u) \Big\vert \leq \kappa \ d\big(\bs{\alpha}', \bs{\alpha} \big).
    \\
    \textbf{(III):} \ &\Big\vert \log\tilde{H}_{\bs{\theta'}}\big(W_{j,i-1}+X_{j,i-1}-A_{j,i} \big) - \log\tilde{H}_{\bs{\theta}}\big(W_{j,i-1}+X_{j,i-1}-A_{j,i} \big) \Big\vert
    \\
    \leq \ & G_2\big(W_{j,i-1}+X_{j,i-1}-A_{j,i}\big) \ d\big(\bs{\theta}, \bs{\theta'}\big) \leq G_2\big(W_{j,i}\big) \ d\big(\bs{\theta}, \bs{\theta'}\big).
    \\
    \textbf{(IV):} \ & \Big|\lambda_{\bs{\alpha}'}\big(u+\tilde{A}_{j,i-1}) \tilde{H}_{\bs{\theta'}}\big(W_{j,i-1}+X_{j,i-1}-u \big) - \lambda_{\bs{\alpha}}\big(u+\tilde{A}_{j,i-1}) \tilde{H}_{\bs{\theta}}\big(W_{j,i-1}+X_{j,i-1}-u \big) \Big|
    \\
    \leq \ &\Big|\lambda_{\bs{\alpha}'}\big(u+\tilde{A}_{j,i-1}) \tilde{H}_{\bs{\theta'}}\big(W_{j,i-1}+X_{j,i-1}-u \big) - \lambda_{\bs{\alpha}'}\big(u+\tilde{A}_{j,i-1}) \tilde{H}_{\bs{\theta}}\big(W_{j,i-1}+X_{j,i-1}-u \big) \Big|
    \\
    & + \Big|\lambda_{\bs{\alpha}'}\big(u+\tilde{A}_{j,i-1}) \tilde{H}_{\bs{\theta}}\big(W_{j,i-1}+X_{j,i-1}-u \big) - \lambda_{\bs{\alpha}}\big(u+\tilde{A}_{j,i-1}) \tilde{H}_{\bs{\theta}}\big(W_{j,i-1}+X_{j,i-1}-u \big) \Big|
    \\
    \leq \ & \lambda_{\max} \Big|\tilde{H}_{\bs{\theta'}}\big(W_{j,i-1}+X_{j,i-1}-u \big) - \tilde{H}_{\bs{\theta}}\big(W_{j,i-1}+X_{j,i-1}-u \big) \Big| + \Big\vert\lambda_{\bs{\alpha}'}\big(u+\tilde{A}_{j,i-1}) - \lambda_{\bs{\alpha}}\big(u+\tilde{A}_{j,i-1}) \Big\vert
    \\
    \leq \ & \lambda_{\max} \ G_1 \ d\big(\bs{\theta}, \bs{\theta'}\big) + \kappa \ d\big(\bs{\alpha}', \bs{\alpha}\big).
    \\
    \textbf{(V):} \ & \text{Same computations as \textbf{(IV)} lead to}
    \\
    \leq \ & \lambda_{\max} \ G_1 \ d\big(\bs{\theta}, \bs{\theta'}\big) + \kappa \ d\big(\bs{\alpha}, \bs{\alpha}'\big).
\end{align*}
Therefore,
\begin{align*}
    \Big\vert q\big(\bs{Z_j}, \bs{\mu'}\big) - q\big(\bs{Z_j}, \bs{\mu}\big) \Big\vert \leq \ &C_j \frac{\kappa}{\lambda_{\min}} \ d\big(\bs{\alpha}, \bs{\alpha}'\big) + \kappa A_{j,1} \ d\big(\bs{\alpha}, \bs{\alpha}'\big) + \sum_{i=2}^{C_j} G_2\big(W_{j,i}\big) \ d\big(\bs{\theta}, \bs{\theta'}\big)
    \\
    & + \Big(\lambda_{\max} \ G_1 \ d\big(\bs{\theta}, \bs{\theta'}\big) + \kappa \ d\big(\bs{\alpha}, \bs{\alpha}'\big) \Big) \Big(A_{j,2}+A_{j,3}+ \cdots + A_{j,C_j} + R_j - \tilde{A}_{j,C_j} \Big)
    \\
    = \ & \Big( C_j\frac{\kappa}{\lambda_{\min}} + R_j \kappa \Big) \ d\big(\bs{\alpha}, \bs{\alpha}'\big) + \Bigg(\sum_{i=2}^{C_j} G_2\big(W_{j,i}\big) + \lambda_{\max}\big(R_j - A_{j,1}\big) G_1 \Bigg) \ d\big(\bs{\theta}, \bs{\theta'}\big).
\end{align*}
Separating, we get
\begin{align*}
     \Big\vert q\big(\bs{Z_j}, \bs{\mu'}\big) - q\big(\bs{Z_j}, \bs{\mu}\big) \Big\vert \leq \underbrace{d\big(\bs{\mu}, \bs{\mu'}\big)}_{h(x)=x} \times \underbrace{\Bigg(C_j\frac{\kappa}{\lambda_{\min}} + R_j \kappa + \sum_{i=2}^{C_j} G_2\big(W_{j,i}\big) + \lambda_{\max}\big(R_j - A_{j,1}\big) G_1 \Bigg)}_{B\big(\bs{Z_j}\big)}.
\end{align*}
By Assumption (A6) and Theorem \ref{theorem:properties_of_system}, it is clear that $\exptn\big[B(\bs{Z_j})\big] < \infty$.
\end{proof}

\begin{proof}[Proof of Lemma \ref{lemma:E(q_dot)=0} in Section \ref{section: strong consistency}]
For a fixed data vector ${\bs{Z}_1}$, we have
\begin{align*}
    q\big({\bs{Z}_1}, .\big) : \mathbb{R}^{k+p} \longrightarrow \mathbb{R}, \ 
    \nabla q\big({\bs{Z}_1}, .\big) : \mathbb{R}^{k+p} \longrightarrow \mathbb{R}^{k+p}.
\end{align*}
Because of Assumptions (A8), (A9), we can write
\begin{align*}
    \nabla q\big({\bs{Z}_1}, {\bs{\mu}_0}\big) = 
    \begin{bmatrix}
        \frac{\partial q\big({\bs{Z}_1}, {\bs{\mu}_0}\big)}{\partial \alpha_1}&
        \cdots&
        \frac{\partial q\big({\bs{Z}_1}, {\bs{\mu}_0}\big)}{\partial \alpha_k}&
        \frac{\partial q\big({\bs{Z}_1}, {\bs{\mu}_0}\big)}{\partial \theta_1}&\cdots&
        \frac{\partial q\big({\bs{Z}_1}, {\bs{\mu}_0}\big)}{\partial \theta_p}.   
    \end{bmatrix}^\top
\end{align*}
We need to prove that every component of $\exptn \Big[\nabla q\big({\bs{Z}_1}, {\bs{\mu}_0}\big) \Big]$ is 0. 
We prove 
\begin{align*}
    \exptn \bigg[\frac{\partial q\big({\bs{Z}_1}, {\bs{\mu}_0}\big)}{\partial \alpha_1}\bigg] = \exptn \bigg[\frac{\partial q\big({\bs{Z}_1}, {\bs{\mu}_0}\big)}{\partial \theta_1}\bigg] = 0;
\end{align*}
the other components then follow along the same lines.
By Assumptions (A2), (A6), (A7) and Theorem \ref{theorem:properties_of_system}, we can conclude that $q\big(\bs{Z_j}, {\bs{\mu}_0}\big)$ is integrable:
\begin{align*}
   \Big\vert q\big(\bs{Z_j}, {\bs{\mu}_0}\big) \Big\vert &\leq C_j \max\Big\{\big\vert\log\lambda_{\min}\big\vert, \big\vert \log\lambda_{\max}\big\vert \Big\} + \sum_{i=2}^{C_j} G_3\big(W_j\big) + \lambda_{\max} R_j
   \end{align*}
   implies
   \begin{align*}
   \exptn \Big\vert q\big(\bs{Z_j}, {\bs{\mu}_0}\big) \Big\vert &\leq \exptn\big[C_j\big] \max\Big\{\big\vert\log\lambda_{\min}\big\vert, \big\vert \log\lambda_{\max}\big\vert \Big\} + \exptn \bigg[\sum_{i=2}^{C_j} G_3\big(W_j\big) \bigg] + \lambda_{\max} \exptn[R_j] < \infty.
\end{align*}
By Assumption (A8), $q\big(\bs{Z_j}, \bs{\mu}\big)$ is continuously differentiable with respect to $\alpha_1, \alpha_2, \cdots, \alpha_k$ and $\theta_1, \theta_2, \cdots, \theta_p$. Now, we take the partial derivative of $q\big(\bs{Z_j}, \bs{\mu}\big)$ with respect to $\alpha_1$:
\begin{align*}
    \frac{\partial}{\partial \alpha_1} q\big(\bs{Z_j}, {\bs{\mu}_0}\big) = &\sum_{i=1}^{C_j} \frac{1}{\lambda_{{\bs{\alpha}_0}}\big(\tilde{A}_{j,i}\big)} \frac{\partial}{\partial \alpha_1}\lambda_{{\bs{\alpha}_0}}\big(\tilde{A}_{j,i}\big) - \int_{0}^{A_{j,1}} \frac{\partial}{\partial \alpha_1} \lambda_{{\bs{\alpha}_0}}\big(u\big) \ \mathrm{d}u
    \\
    & -\sum_{i=2}^{C_j} \int_{0}^{A_{j,i}} \frac{\partial}{\partial \alpha_1} \lambda_{{\bs{\alpha}_0}}\big(u+\tilde{A}_{j,i-1}\big)\tilde{H}_{\bs{\theta}}\big(W_{j,i-1}+X_{j,i-1}-u\big) \ \mathrm{d}u
    \\
    & + \int_{0}^{R_j - \tilde{A}_{j,C_j}} \frac{\partial}{\partial \alpha_1} \lambda_{{\bs{\alpha}_0}}\big(u+\tilde{A}_{j,C_j} \big) \tilde{H}_{\bs{\theta}}\big(W_{j,C_j} + X_{j,C_j} - u\big) \ \mathrm{d}u.
\end{align*}
By Assumption (A2), (A3),
\begin{align*}
    \bigg\vert \frac{\partial}{\partial \alpha_1} q\big(\bs{Z_j}, {\bs{\mu}_0}\big) \bigg\vert \leq \frac{\kappa}{\lambda_{\min}}C_j + \kappa R_j.
\end{align*}
Furthermore, by Theorem \ref{theorem:properties_of_system}(a), (b), 
\begin{align*}
    \exptn \Bigg[\frac{\kappa}{\lambda_{\min}} C_j + \kappa R_j \Bigg] = \frac{\kappa}{\lambda_{\min}} \exptn\big[C_j\big] + \kappa \exptn\big[R_j\big]< \infty.
\end{align*}
Therefore, by the dominated convergence theorem,
\begin{align*}
    \exptn \Bigg[\frac{\partial q\big({\bs{Z}_1}, {\bs{\mu}_0}\big)}{\partial \alpha_1} \Bigg] = \frac{\partial}{\partial \alpha_1} \exptn \big[q\big({\bs{Z}_1},{\bs{\mu}_0}\big)\big] = 0.
\end{align*}
Now we take the partial derivative of $q\big(\bs{Z_j}, \bs{\mu}\big)$ with respect to $\theta_1$.
\begin{align*}
    \frac{\partial}{\partial \theta_1} q\big(\bs{Z_j}, {\bs{\mu}_0}\big) = &\sum_{i=2}^{C_j} \frac{\partial}{\partial \theta_1} \log\tilde{H}_{{\bs{\theta}_0}}\big(W_{j,i-1}+X_{j,i-1}-A_{j,i}\big) 
    \\
    &- \sum_{i=2}^{C_j} \int_{0}^{A_{j,i}} \lambda_{{\bs{\alpha}_0}}\big(u+\tilde{A}_{j,i-1}\big) \frac{\partial}{\partial \theta_1}\tilde{H}_{{\bs{\theta}_0}}\big(W_{j,i-1}+X_{j,i-1}-u\big) \ \mathrm{d}u
    \\
    &+ \int_{0}^{R_j - \tilde{A}_{j,C_j}} \lambda_{{\bs{\alpha}_0}}\big(u+\tilde{A}_{j,C_j}\big) \frac{\partial}{\partial \theta_1} \tilde{H}_{{\bs{\theta}_0}}\big(W_{j,C_j}+X_{j,C_j}-u\big) \ \mathrm{d}u.
\end{align*}
By Assumptions (A2), (A5) and (A7),
\begin{align*}
    \bigg\vert \frac{\partial}{\partial \theta_1} q\big(\bs{Z_j}, {\bs{\mu}_0}\big) \bigg\vert \leq \sum_{i=2}^{C_j} G_2\big(W_{j,i}\big) + \lambda_{\max}\ G_1 \big(R_j - A_{j,1}\big).
\end{align*}
Furthermore, by Assumption (A6), and Theorem \ref{theorem:properties_of_system}(a), (c), 
\begin{align*}
    \exptn \Bigg[\sum_{i=2}^{C_j} G_2\big(W_{j,i}\big) + \lambda_{\max} G_1\big(R_j - A_{j,1}\big) \Bigg] = \exptn\Bigg[\sum_{i=2}^{C_j} G_2\big(W_{j,i}\big) \Bigg] + \lambda_{\max} \ G_1 \exptn\big[R_j - A_{j,1} \big] < \infty.    
\end{align*}
Therefore, again by the dominated convergence theorem, 
\begin{align*}
    \exptn \Bigg[\frac{\partial q\big({\bs{Z}_1}, {\bs{\mu}_0}\big)}{\partial \theta_1} \Bigg] = \frac{\partial}{\partial \theta_1} \exptn q\big({\bs{Z}_1},{\bs{\mu}_0}\big) = 0.
\end{align*}
The same argument applies to $\alpha_2, \cdots, \alpha_k$ and $\theta_2, \cdots, \theta_p$, so that
Therefore, $\exptn \big[\nabla q\big({\bs{Z}_1}, {\bs{\mu}_0}\big) \big] = 0$.
\end{proof}

\begin{proof}[Proof for Proposition \ref{propn} in Section \ref{section: joining decisions based on incomplete information}]
%\footnote{{\bf MM}: explain how (A11) has been used.}
For our new objects $q(\bs{Z}_j, \bs{\mu})$, we re-prove Lemma \ref{D_Andrews_lemma_1}. We start with
\begin{align*}
    &\Big\vert q\big(\bs{Z_j}, \bs{\mu'}\big) - q\big(\bs{Z_j}, \bs{\mu}\big) \Big\vert \leq \sum_{i=1}^{C_j} \underbrace{\Big\vert  \log\lambda_{\bs{\alpha}'}\big(\tilde{A}_{j,i}\big) - \log\lambda_{\bs{\alpha}}\big(\tilde{A}_{j,i}\big) \Big\vert}_{\text{(I)}} + \int_{0}^{A_{j,1}} \underbrace{\Big\vert  \lambda_{\bs{\alpha}'}(u) - \lambda_{\bs{\alpha}}(u) \Big\vert}_{\text{(II)}} \, \mathrm{d}u \ + \notag
    \\
    &\sum_{i=2}^{C_j} \underbrace{\Big\vert \log\tilde{H}_{\bs{\theta'}}\big(\psi\big(\mathcal{R}(\tilde{A}_{j,i})\big)\big) - \log\tilde{H}_{\bs{\theta}}\big(\psi\big(\mathcal{R}(\tilde{A}_{j,i})\big)\big) \Big\vert}_{\text{(III)}} \ + \notag
    \\
    &\sum_{i=2}^{C_j} \int_{0}^{A_{j,i}} \underbrace{\Big\vert \lambda_{\bs{\alpha}'}\big(u+\tilde{A}_{j,i-1}\big)\tilde{H}_{\bs{\theta'}}\big(\psi\big(\mathcal{R}(u+\tilde{A}_{j,i-1})\big)\big) - \lambda_{\bs{\alpha}}\big(u+\tilde{A}_{j,i-1}\big)\tilde{H}_{\bs{\theta}}\big(\psi\big(\mathcal{R}(u+\tilde{A}_{j,i-1})\big)\big)\Big\vert}_{\text{(IV)}} \, \mathrm{d}u \, + \notag
    \\
    &\int_{0}^{R_j - \tilde{A}_{j, C_j}} \underbrace{\Big\vert \lambda_{\bs{\alpha}'}\big(u+\tilde{A}_{j,C_j}\big)\tilde{H}_{\bs{\theta'}}\big(\psi\big(\mathcal{R}(u+\tilde{A}_{j,C_j})\big)\big) - \lambda_{\bs{\alpha}}\big(u+\tilde{A}_{j,C_j}\big)\tilde{H}_{\bs{\theta}}\big(\psi\big(\mathcal{R}(u+\tilde{A}_{j,C_j})\big)\big) \Big\vert}_{\text{(V)}} \, \mathrm{d}u.
\end{align*}
Observe that the upper bounds for terms (I), (II), (IV) and (V) follow in the exact same way as from the proof of Lemma \ref{D_Andrews_lemma_1}. However, for term (III), under (A6) we have
\begin{align*}
    \Big\vert \log\tilde{H}_{\bs{\theta'}}\big(\psi\big(\mathcal{R}(\tilde{A}_{j,i})\big)\big) - \log\tilde{H}_{\bs{\theta}}\big(\psi\big(\mathcal{R}(\tilde{A}_{j,i})\big)\big) \Big\vert &\leq G_2\big(\psi\big(\mathcal{R}(\tilde{A}_{j,i})\big)\big) \ d\big(\theta, \theta'\big). 
\end{align*}
Further, we get
\begin{align*}
    G_2\big(\psi\big(\mathcal{R}(\tilde{A}_{j,i})\big)\big) &\leq G_2\big(\tilde{M}W_{j,i}\big), \ \text{under Assumption (A11)(a)}.
    \\
    G_2\big(\psi\big(\mathcal{R}(\tilde{A}_{j,i})\big)\big) &\leq G_2\big(\psi_1\big(L(\tilde{A}_{j,i})\big)\big), \ \text{under Assumption (A11)(b)}.
\end{align*}
Under (A11)(a), we therefore get
\begin{align*}
    \Big\vert q\big(\bs{Z_j}, \bs{\mu'}\big) - q\big(\bs{Z_j}, \bs{\mu}\big) \Big\vert \leq \underbrace{d\big(\bs{\mu}, \bs{\mu'}\big)}_{h(x)=x} \times \underbrace{\Bigg(C_j\frac{\kappa}{\lambda_{\min}} + R_j \kappa + \sum_{i=2}^{C_j} G_2\big(\tilde{M} W_{j,i}\big) + \lambda_{\max}\big(R_j - A_{j,1}\big) G_1 \Bigg)}_{B_1(Z_j)},
\end{align*}
while under (A11)(b), we get
\begin{align*}
    \Big\vert q\big(\bs{Z_j}, \bs{\mu'}\big) - q\big(\bs{Z_j}, \bs{\mu}\big) \Big\vert \leq \underbrace{d\big(\bs{\mu}, \bs{\mu'}\big)}_{h(x)=x} \times \underbrace{\Bigg(C_j\frac{\kappa}{\lambda_{\min}} + R_j \kappa + \sum_{i=2}^{C_j} G_2\big(\psi_1\big(L(\tilde{A}_{j,i})\big)\big) + \lambda_{\max}\big(R_j - A_{j,1}\big) G_1 \Bigg)}_{B_2(Z_j)}.
\end{align*}
Recalling that $G_2$ is linear together with \ref{theorem:properties_of_system}, it is clear that $\exptn\big[B_1(Z_j)\big] < \infty$.
Recalling that $G_2$ and $\psi_1$ are linear, and that $\exptn\big[C_1^2\big] < \infty$, together with Theorem \ref{theorem:properties_of_system}, it is clear that $\exptn\big[B_2(\bs{Z_j})\big] < \infty$.
This ensures that the Lemma \ref{D_Andrews_lemma_1} holds. 
Subsequently, the proof of Theorem \ref{theorem:asymptotic_normality} relies only on the objects $q(\bs{Z}_j, \bs\mu)$ which are i.i.d.\ and whose sum is the log-likelihood. 
We have managed to define such objects in Equation \eqref{eqn:q:incomplete}, and hence the proof carries over directly.
\end{proof}

\end{document}